\newcommand{\RR}{{\mathbb R}}
\newcommand{\spt}{\operatorname{supp}}
\renewcommand{\Re}{\mathop{\rm Re}\nolimits}
\renewcommand{\Im}{\mathop{\rm Im}\nolimits}
\newtheorem{thm}{Theorem}
\newtheorem{prop}{Proposition}[section]
\newtheorem{cor}[prop]{Corollary}
\newtheorem{lem}[prop]{Lemma}
\newtheorem{lemma}[prop]{Lemma}
\newtheorem{claim}[prop]{Claim}
\newtheorem{rem}{Remark}[section]
\newtheorem{defn}[prop]{Definition}
\numberwithin{equation}{section}
\numberwithin{prop}{section}
\def\squarebox#1{\hbox to #1{\hfill\vbox to #1{\vfill}}}
\def\R{\mathbb R}
\def\reals{\mathbb R}
\def\reals{{\mathbb R}}
\def\Re{\,\mathrm{Re}\,}
\def\Im{\,\mathrm{Im}\,}
\def\be{\begin{eqnarray*}}
\def\ee{\end{eqnarray*}}
\def\ben{\begin{eqnarray}}
\def\een{\end{eqnarray}}
\title
[Dispersive estimates for matrix Hamiltonians]
{Dispersive estimates using scattering theory for matrix Hamiltonian equations}
\author[J.L. Marzuola]
{Jeremy L. Marzuola}
\address{Applied Mathematics Department, Columbia University \\
200 S. W. Mudd, 500 W. 120th St., New York City, NY 10027, USA}
\begin{document}    
   
\begin{abstract}
We develop the techniques of \cite{KS1} and \cite{ES1} in order to derive dispersive estimates for a matrix Hamiltonian equation defined by linearizing about a minimal mass soliton solution of a saturated, focussing nonlinear Schr\"odinger equation
\begin{eqnarray*}
\left\{\begin{array}{c}
i u_t + \Delta u + \beta (|u|^2) u = 0 \\
u(0,x) = u_0 (x),
\end{array} \right.
\end{eqnarray*}
in $\reals^3$.  These results have been seen before, though we present a new approach using scattering theory techniques.  In further works, we will numerically and analytically study the existence of a minimal mass soliton, as well as the spectral assumptions made in the analysis presented here.  
\end{abstract}

\maketitle 

\section{Introduction}

In this result, we develop the dipsersive estimates used to prove stability of solitons for a focussing, saturated nonlinear Schr\"odinger equation (NLS) in $\R \times \R^d$:
\begin{eqnarray*}
i u_t + \Delta u + \beta (|u|^2) u & = & 0 \\
u(0,x) & = & u_0 (x),
\end{eqnarray*}
where $\beta: \reals \to \reals$, $\beta(s) \geq 0$ for all $s \in \reals$, $\beta$ has a specific structure outlined in the following definitions:

\begin{defn}
\label{def:non1}
Saturated nonlinearities of type $1$ are of the form
\begin{eqnarray}
\label{sat:eqn1}
\beta (s) = s^{\frac{q}{2}} \frac{s^{\frac{p-q}{2}}}{1 + s^{\frac{p-q}{2}}}, 
\end{eqnarray}
where $p > 2+\frac{4}{d}$ and $\frac{4}{d}  > q > 0$ for $d \geq 3$ and $\infty > p > 2+\frac{4}{d} > \frac{4}{d} > q > 0$ for $d < 3$.
\end{defn}    

\begin{defn}
\label{def:non2}
Saturated nonlinearities of type $2$ are of the form
\begin{eqnarray}
\label{sat:eqn2}
\beta(s) = \frac{s}{(1 + s)^{\frac{2-q}{2}}},
\end{eqnarray}
where $\frac{4}{d} > q >0$, $d > 2$. 
\end{defn}

\begin{rem}
In both cases, for $|u|$ large, the behavior is $L^2$ subcritical and for $|u|$ small, the behavior is $L^2$ supercritical.  For Definition \ref{def:non1}, $p$ is chosen much larger than the $L^2$ critical exponent, $\frac{4}{d}$ in order to allow sufficient regularity when linearizing the equation about the soliton.  
\end{rem}

In the sequel, we assume that $u_0 \in H^1$ and $|x|u_0 \in L^2$, or in other words, $u_0$ has finite variance.  For initial data with this regularity, from the spatial and phase invariance of NLS, we have many the following conserved quantities:

Conservation of Mass (or Charge):
\begin{eqnarray*}
Q (u) = \frac{1}{2} \int_{\R^n} |u|^2 dx = \frac{1}{2} \int_{\R^d} |u_0|^2 dx,
\end{eqnarray*}

and

Conservation of Energy:
\begin{eqnarray*}
E(u) = \int_{\R^d} | \nabla u |^2 dx - \int_{\R^d} G(|u|^2) dx = \int_{\R^d} | \nabla u_0 |^2 dx - \int_{\R^d} G(|u_0|^2) dx,
\end{eqnarray*}
where 
\begin{eqnarray*}
G(t) = \int_0^t \beta(s) ds.
\end{eqnarray*}

We also have the pseudoconformal conservation law:
\begin{eqnarray}
\| (x + 2 i t \nabla ) u \|^2_{L^2} - 4 t^2 \int_{\R^d} G(|u|^2) dx = \| x \phi \|^2_{L^2} - \int_0^t \theta (s) ds,
\end{eqnarray}
where 
\begin{eqnarray*}
\theta (s) = \int_{\R^d} (4 (d+2) G(|u|^2) - 4 d \beta(|u|^2) |u|^2) dx.
\end{eqnarray*}
Note that $(x + 2 i t \nabla )$ is the Hamilton flow of the linear Schr\"odinger equation, so the above identity shows how the solution to the nonlinear equation is effected by the linear flow.

Detailed proofs of these conservation laws can be arrived at easily using energy estimates or Noether's Theorem, which relates conservation laws to symmetries of an equation.  Global well-posedness in $L^2$ of (NLS) with $\beta$ of type $1$ or $2$ for finite variance initial data follows from standard theory for $L^2$ subcritical monomial nonlinearities.  Proofs of the above results can be found in numerous excellent references for (NLS), including \cite{Caz} and \cite{SS}.

{\sc Acknowledgments.} This paper is a result of a thesis done under the direction of Daniel Tataru at the University of California, Berkeley.  It is more than fair to say this work would not exist without his assistance.  Also, the author must specifically thank Maciej Zworski for pointing him towards the results of H\"ormander on perturbations of elliptic operators, as well as many other helpful conversations about this result.  The work was supported by Graduate Fellowships from the University of California, Berkeley and NSF grants DMS0354539 and DMS0301122.  In addition, the author spent a semester as an Associate Member of MSRI during the development of these results.  Currently, the author is supported by an NSF Postdoctoral Fellowship.

\section{Soliton Solutions}

A soliton solution is of the form 
\begin{eqnarray*}
u(t,x) = e^{i \lambda t} R_\lambda(x)
\end{eqnarray*} where 
$\lambda > 0$ and $R_\lambda (x)$ is a positive, radially symmetric, exponentially decaying solution of the equation:
\begin{eqnarray}
\label{eqn:sol}
\Delta R_\lambda - \lambda R_\lambda + \beta (R_\lambda ) R_\lambda = 0.
\end{eqnarray}
With nonlinearities of type $1$ or $2$, soliton solutions exist and are known to be unique.  Existence of solitary waves for nonlinearities of the type presented in Definitions \ref{def:non1} and \ref{def:non2} is proved by in \cite{BeLi} by minimizing the functional 
$$T(u) = \int | \nabla u |^2 dx$$ 
with respect to the functional 
$$V(u) = \int [ G(|u|^2) - \frac{\lambda}{2} |u|^2 ] dx.$$  
Then, using a minimizing sequence and Schwarz symmetrization, one sees the existence of the nonnegative, spherically symmetric, decreasing soliton solution.  For uniqueness, see \cite{Mc}, where a shooting method is implemented to show that the desired soliton behavior only occurs for one particular initial value. 

An important fact is that $Q_{\lambda} = Q(R_{\lambda})$ and $E_{\lambda} = E(R_{\lambda})$ are differentiable with respect to $\lambda$.  This fact can be determined from the early works of Shatah, namely \cite{Sh1}, \cite{Sh2}.  By differentiating Equation \eqref{eqn:sol}, $Q$ and $E$ with respect to $\lambda$, we have
\begin{eqnarray*}
\partial_{\lambda} E_{\lambda} = - \lambda \partial_{\lambda} Q_{\lambda}.
\end{eqnarray*}

Numerics show that if we plot $Q_{\lambda}$ with respect to $\lambda$, we get a curve that goes to $\infty$ as $\lambda \to 0, \infty$ and has a global minimum at some $\lambda = \lambda_0 > 0$, see Figure \ref{f:solcurves}.  We will explore this in detail in a subsequent numerical work \cite{Mnum}.  Variational techniques developed in \cite{GSS} and \cite{ShSt} tell us that when $\delta ( \lambda ) = E_{\lambda} + \lambda Q_{\lambda}$ is convex, or $\delta '' (\lambda) > 0$, we are guaranteed stability under small perturbations, while for $\delta '' (\lambda) < 0$ we are guaranteed that the soliton is unstable under small perturbations.  We will explore the nature of this stability in another subsequent work, \cite{Mnl}, where we study the full nonlinear problem.  For a brief reference on this subject, see \cite{SS}, Chapter 4.  For nonlinear instability at a minimum, see \cite{CP1}.  For notational purposes, we refer to a minimal mass soliton as $R_{min}$.

\begin{figure}
\scalebox{0.38}{\includegraphics{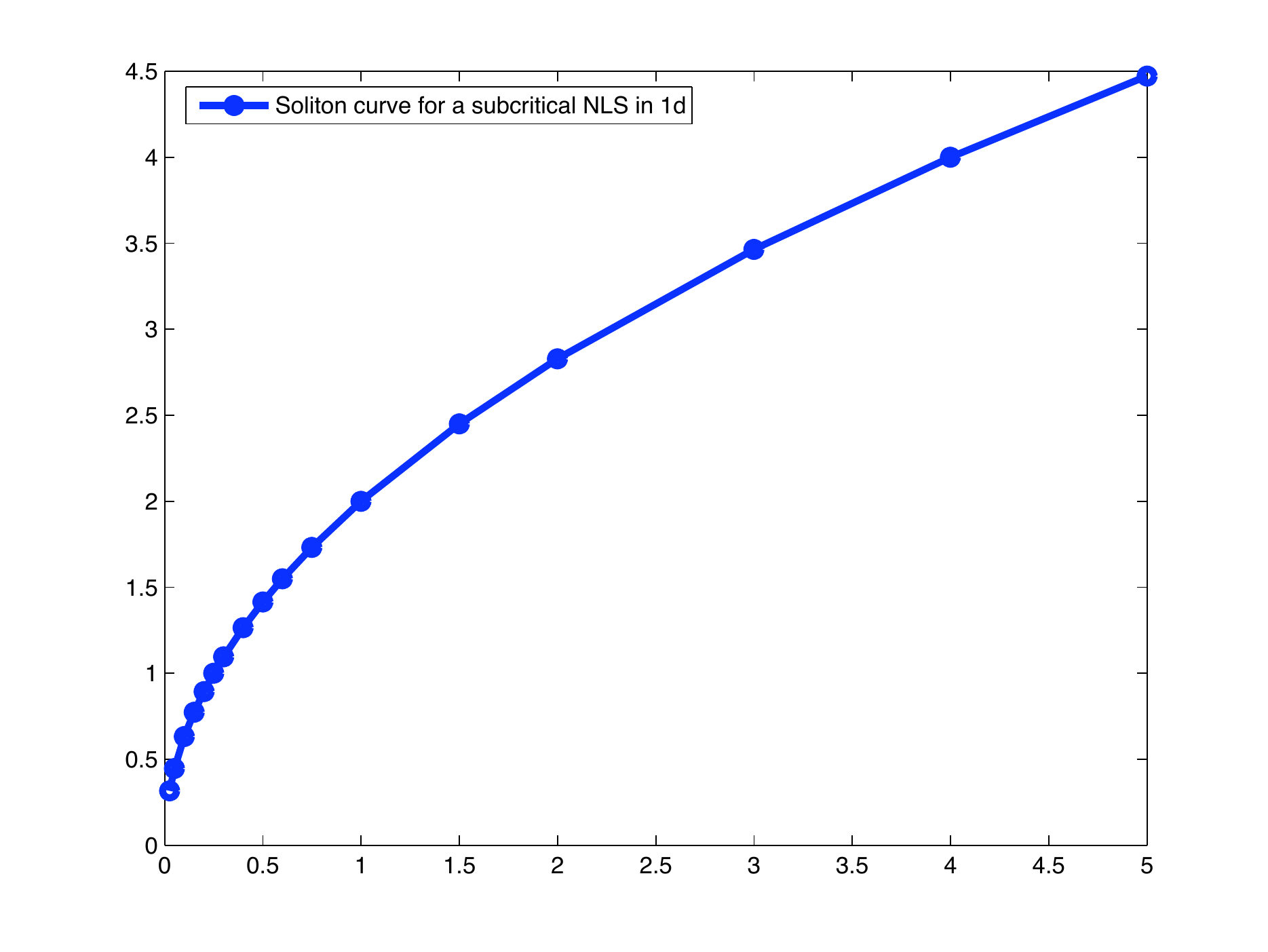}} \hfill \scalebox{0.38}{\includegraphics{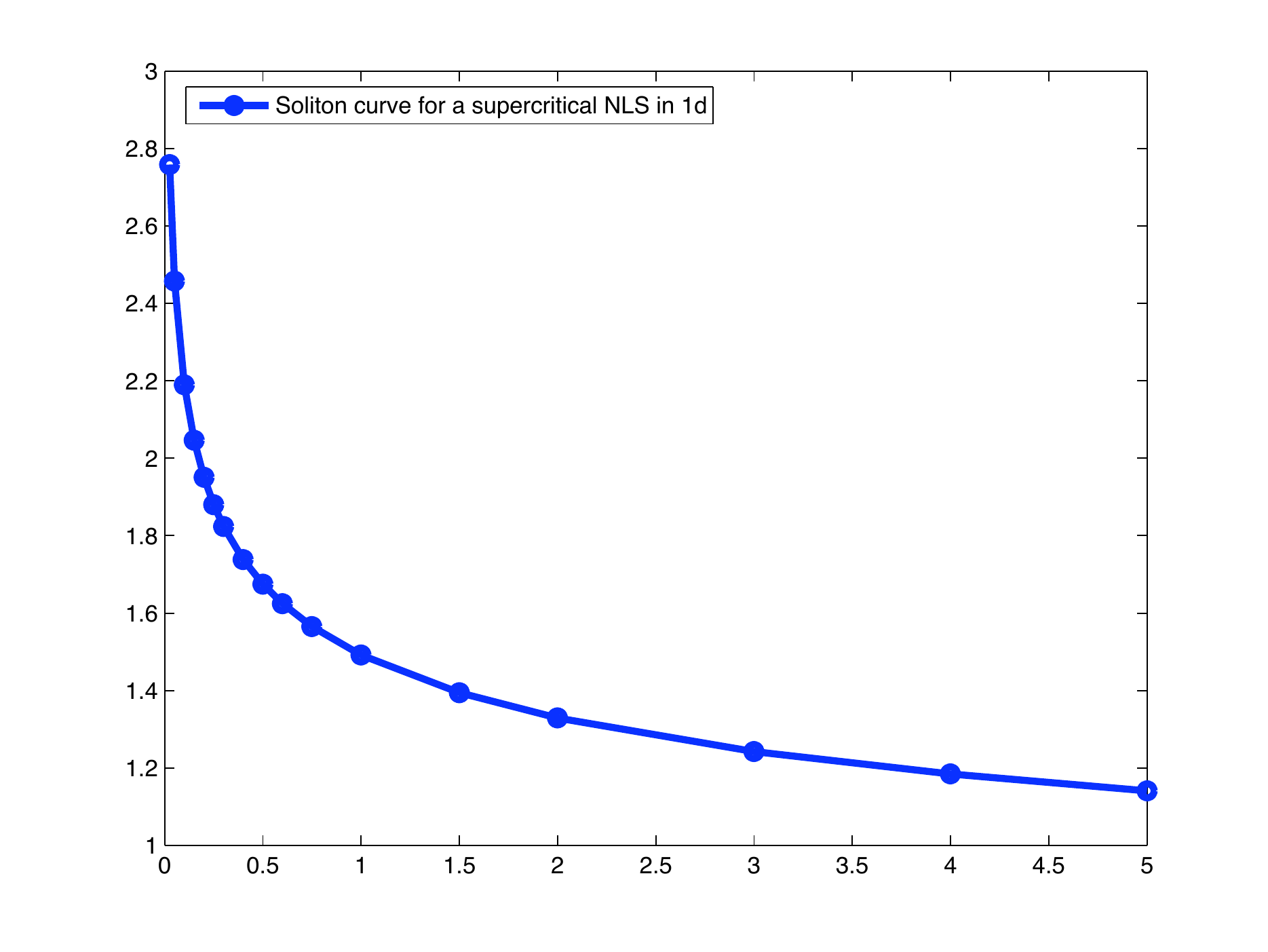}} 
\scalebox{0.38}{\includegraphics{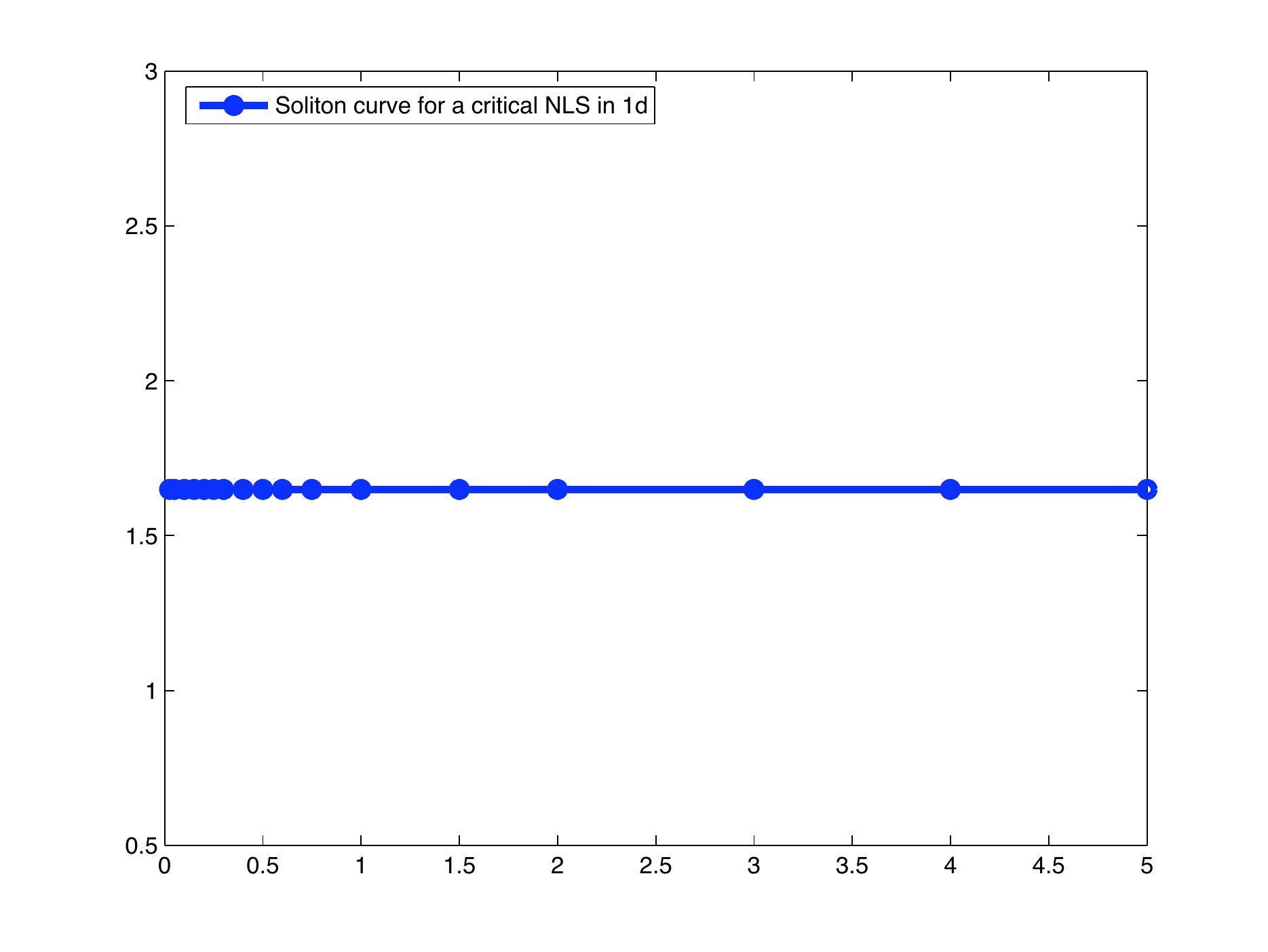}} \hfill \scalebox{0.38}{\includegraphics{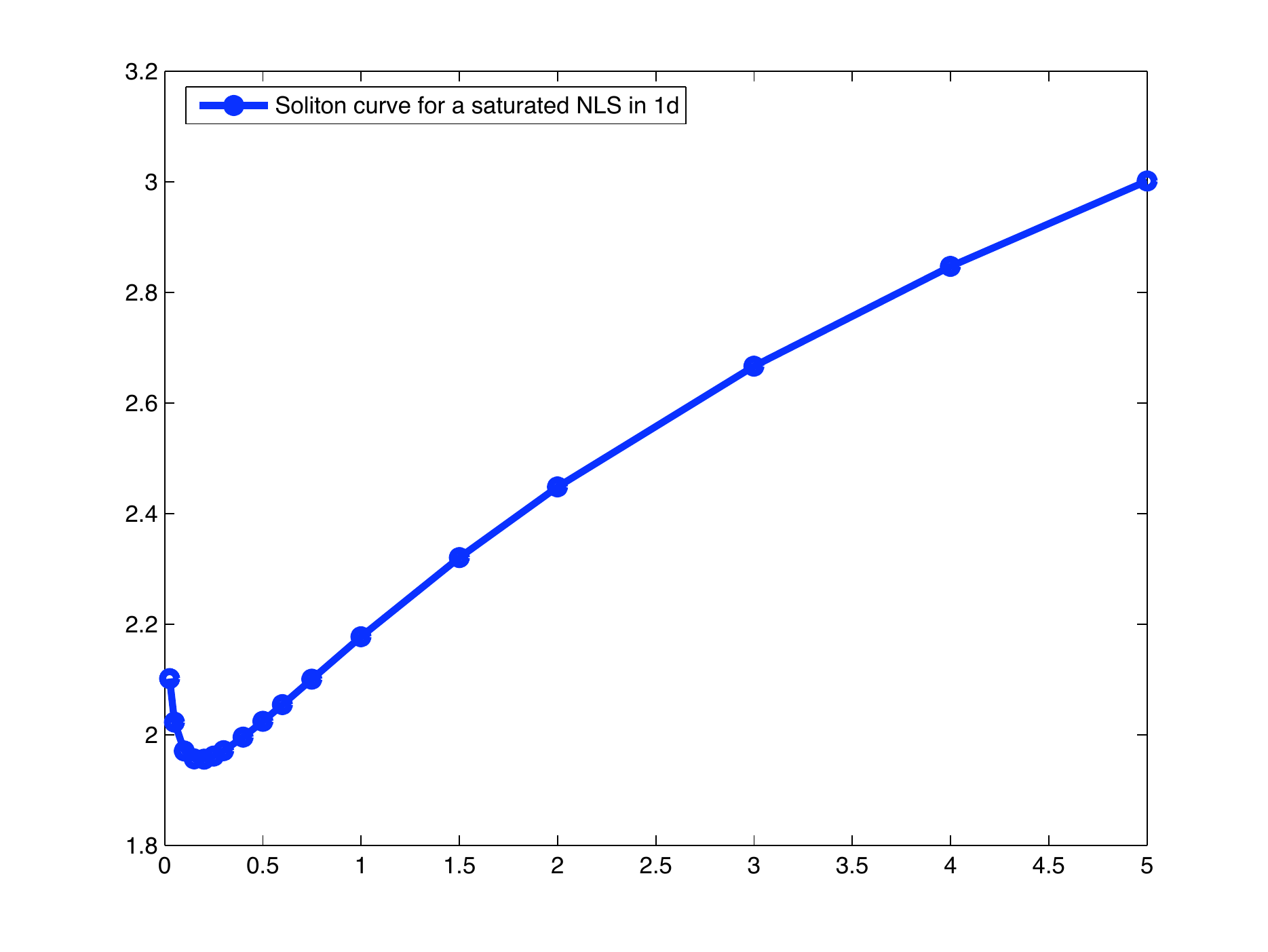}} 
\scalebox{0.38}{\includegraphics{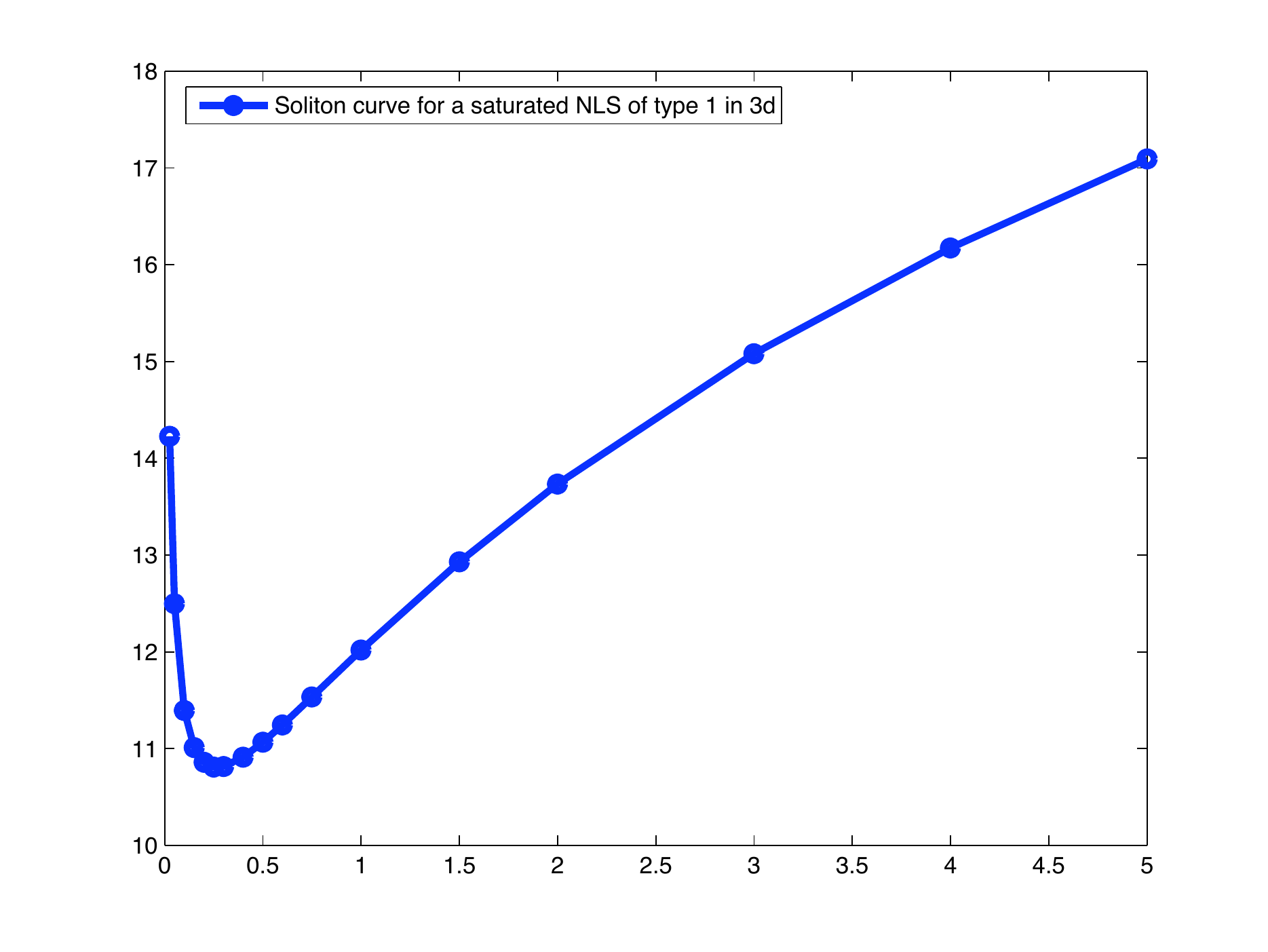}} \hfill \scalebox{0.38}{\includegraphics{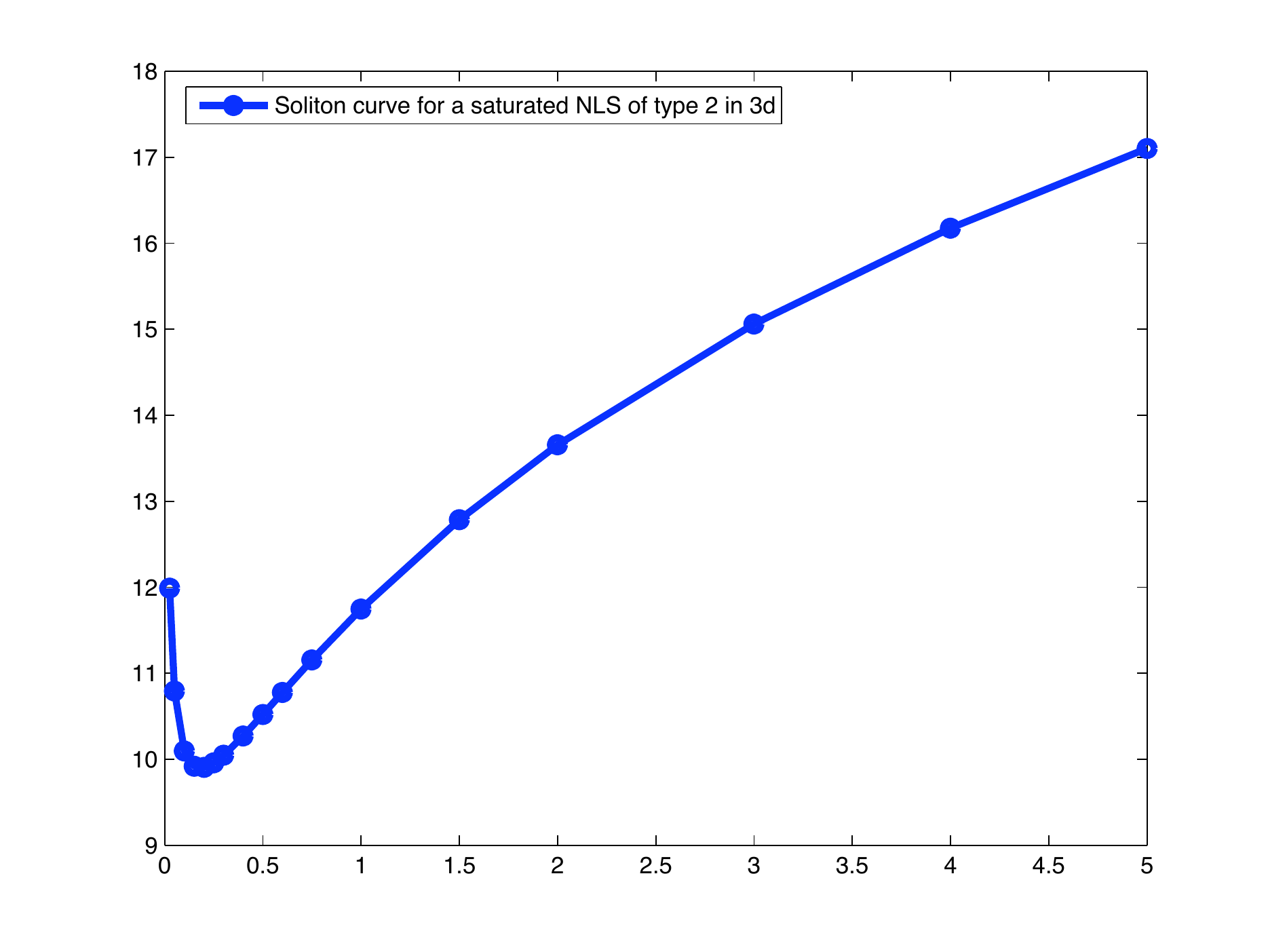}}

\caption{Plots of the soliton curves ($Q(\lambda)$ with respect to $\lambda$) for a subcritical nonlinearity ($d=1$, $p=3$), supercritical nonlinearity ($d=3$, $p=3$), critical nonlinearity ($d=1$, $p=5$), saturated nonlinearity of type $1$ ( $p=7$, $q=3$) in $\reals$, saturated nonlinearity of type $1$ in $3d$ ($p=4$, $q=2$), saturated nonlinearity of type $2$ in $\reals^3$ ($q=2$).  The curves for the monomial nonlinearities are found analytically, while the curves for the saturated nonlinearities are found numerically.}
\label{f:solcurves}
\end{figure}

\section{Linearization about a Soliton}

Let us write down the form of (NLS) linearized about a soliton solution.  First of all, we assume we have a solution $\psi = e^{i \lambda t}(R_{\lambda} + \phi(x,t))$.  For simplicity, set $R = R_\lambda$.  Inserting this into the equation we know that since $R$ is a soliton solution we have 
\begin{eqnarray}
i (\phi)_t + \Delta (\phi) & = & -\beta( R^2) \phi - 2 \beta'( R^2 )  R^2 \text{Re}(\phi) + O (\phi^2),
\end{eqnarray}
by splitting $\phi$ up into its real and imaginary parts then doing a Taylor Expansion. Hence, if $\phi = u + iv$, we get
\begin{eqnarray}
\partial_t \left( \begin{array}{c}
u \\
v
\end{array} \right) = \mathcal{H} \left( \begin{array}{c}
u \\
v
\end{array} \right),
\end{eqnarray}
where 
\begin{eqnarray}
\mathcal{H} =  \left( \begin{array}{cc}
0 & L_{-} \\
-L_{+} & 0
\end{array} \right),
\end{eqnarray}
where $$L_{-} = - \Delta + \lambda - \beta( R_\lambda )$$ and 
$$L_{+} = - \Delta + \lambda - \beta( R_\lambda ) - 2 \beta' (R^2_\lambda) R_\lambda^2.$$

\begin{defn}
\label{spec:defn1}
A Hamiltonian, $\mathcal{H}$, is called admissible if the following conditions hold: \\
1)  There are no embedded eigenvalues in the essential spectrum, \\
2)  The only real eigenvalue in $[-\lambda, \lambda ]$ is $0$, \\
3)  The values $\pm \lambda$ are not resonances. \\
\end{defn}

\begin{defn}
\label{spec:defn2}
Let (NLS) be taken with nonlinearity $\beta$.  We call $\beta$ admissible if there exists a minimal mass soliton, $R_{min}$, for (NLS) and the Hamiltonian, $\mathcal{H}$, resulting from linearization about $R_{min}$ is admissible in terms of Definition \ref{spec:defn1}.
\end{defn}

The spectral properties we need for the linearized Hamiltonian equation in order to prove stability results are precisely those from Definition \ref{spec:defn1}.  Notationally, we refer to $P_d$ and $P_c$ as the projections onto the discrete spectrum of $\mathcal{H}$ and onto the continuous spectrum of $\mathcal{H}$ respectively.  Analysis of these spectral conditions will be done both numerically and analytically in \cite{Mspec}.

\section{Main Results}
\label{int:main}

We derive the existence and important properties of distorted Fourier bases $\tilde{\phi}_{\xi}$ of non-self-adjoint matrix Hamiltonians, and hence a distorted Fourier transform, for a general class of matrix Hamiltonians.  Let $\mathcal{S}$ be the Schwartz class of functions.  Then, we have the following results:  

\begin{thm}
\label{thm:tdec}
Given an admissible Hamiltonian $\mathcal{H}$, and the projection on the continuous spectrum of $\mathcal{H}$, $P_c$, for initial data $\phi \in \mathcal{S}$, we have
\begin{eqnarray*}
\| e^{it\mathcal{H}} P_c \phi \|_{L^\infty} \leq t^{-\frac{d}{2}}.
\end{eqnarray*}
\end{thm}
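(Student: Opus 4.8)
The plan is to construct a \emph{distorted Fourier transform} adapted to $\mathcal{H}$ and to reduce the pointwise decay of the Schwartz kernel of $e^{it\mathcal{H}}P_c$ to oscillatory integral estimates of the same type that govern the free Schr\"odinger group. Since $R_{min}$ decays exponentially, $L_{\pm} = -\Delta + \lambda - V_\pm$ with $V_\pm$ smooth and exponentially decaying, so $\mathcal{H} = \mathcal{H}_0 + \mathcal{V}$, where $\mathcal{H}_0$ is the constant coefficient matrix operator obtained by dropping the potentials and $\mathcal{V}$ is an off-diagonal, exponentially decaying (relatively compact) matrix potential; in particular the essential spectrum of $\mathcal{H}$ is $\RR \setminus (-\lambda,\lambda)$, and on the two half-lines it is parametrized by $\mu(\xi) = \pm(|\xi|^2+\lambda)$, $\xi \in \RR^d$, with $e^{it\mathcal{H}}$ acting by $e^{it\mu(\xi)}$ on the corresponding generalized eigenfunctions. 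First I would establish a limiting absorption principle for the resolvent $R_{\mathcal{H}}(z) = (\mathcal{H}-z)^{-1}$ on weighted spaces $\langle x\rangle^{-\sigma}L^2$: it extends continuously from each half plane to the essential spectrum away from the thresholds $\pm\lambda$ (using condition~1, no embedded eigenvalues), stays bounded in the weighted sense near $\pm\lambda$ (using condition~3, $\pm\lambda$ not resonances), and $P_c = I - P_d$ is a genuine spectral projection with finite-rank complement containing the Jordan block at $0$ coming from the NLS symmetries (using condition~2). Feeding this into the resolvent identity $R_{\mathcal{H}}(z) = R_{\mathcal{H}_0}(z) - R_{\mathcal{H}_0}(z)\,\mathcal{V}\,R_{\mathcal{H}}(z)$, together with H\"ormander's results on perturbations of elliptic operators, produces Jost-type generalized eigenfunctions $\tilde\phi_\xi$ of $\mathcal{H}$ at spectral parameter $\mu(\xi)$, of the form $\tilde\phi_\xi(x) = e^{ix\cdot\xi}\, v + (\text{outgoing scattering tail})$ with $v \in \CC^2$ fixed, depending smoothly on $\xi$, together with the analogous family $\tilde\psi_\xi$ for the adjoint; the two families are biorthogonal.

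Next, integrating $R_{\mathcal{H}}(z)$ along a contour pinching the essential spectrum (a Stone-type formula) I would derive the distorted Plancherel and inversion identities, so that for $\phi\in\mathcal{S}$
\begin{eqnarray*}
P_c \phi = \frac{1}{(2\pi)^d}\int_{\RR^d} \langle \phi, \tilde\psi_\xi\rangle\, \tilde\phi_\xi \, d\xi + (\text{contribution of the second branch}),
\end{eqnarray*}
and the corresponding functional calculus
\begin{eqnarray*}
e^{it\mathcal{H}} P_c \phi = \frac{1}{(2\pi)^d}\int_{\RR^d} e^{it\mu(\xi)}\,\langle \phi, \tilde\psi_\xi\rangle\, \tilde\phi_\xi \, d\xi + \cdots .
\end{eqnarray*}
This exhibits the kernel of $e^{it\mathcal{H}}P_c$ as
\begin{eqnarray*}
K(t,x,y) = \frac{1}{(2\pi)^d}\int_{\RR^d} e^{it\mu(\xi)}\,\tilde\phi_\xi(x)\,\overline{\tilde\psi_\xi(y)}\, d\xi + \cdots ,
\end{eqnarray*}
and the theorem is reduced to the uniform bound $|K(t,x,y)| \leq C\, t^{-d/2}$.

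Finally I would estimate this oscillatory integral by a high/low energy splitting $1 = \chi(\xi) + (1-\chi(\xi))$ with $\chi$ supported near $\xi=0$. On the high-energy piece, expand $\tilde\phi_\xi = e^{ix\cdot\xi}v + (\text{Born series in } R_{\mathcal{H}_0})$: after the $\xi$-integration the leading term reproduces the free kernel $c\, t^{-d/2} e^{i|x-y|^2/4t} e^{\pm it\lambda}$, while each successive Born term is an oscillatory integral built from the free resolvent convolved against the exponentially decaying $\mathcal{V}$, giving a convergent series with the same $t^{-d/2}$ gain, exactly as in \cite{KS1} and \cite{ES1}. On the low-energy piece, the non-resonance at $\pm\lambda$ keeps $\tilde\phi_\xi(x)\overline{\tilde\psi_\xi(y)}$ and its $\xi$-derivatives bounded, and since the Hessian in $\xi$ of the phase $\mu(\xi)$ is $\pm 2I$, a stationary phase / van der Corput argument carried out in the radial variable again yields $t^{-d/2}$. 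Summing the pieces gives $\|e^{it\mathcal{H}}P_c\phi\|_{L^\infty}\leq C\, t^{-d/2}\|\phi\|_{L^1}$, which is the stated estimate.

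The main obstacle is the non-self-adjointness of $\mathcal{H}$: there is no spectral theorem at our disposal, so the limiting absorption principle, the biorthogonality of $\{\tilde\phi_\xi\}$ and $\{\tilde\psi_\xi\}$, and the inversion formula must all be built by hand, and one must carefully confine the generalized (Jordan) eigenspace at $0$ --- generated by $R_{min}$, $\partial_\lambda R_{min}$ and the translation and phase modes --- inside $P_d$ so that it does not contaminate the continuous part. The second delicate point is the behavior of $R_{\mathcal{H}}(z)$ and of the distorted eigenfunctions near the thresholds $\pm\lambda$, where the weighted estimates are tightest and where hypothesis~3 is precisely what makes the argument close.
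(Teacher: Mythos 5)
Your plan is sound and would close, but it follows the resolvent/limiting-absorption route of \cite{Ag}, \cite{ES1} and \cite{KS1} rather than the route the paper actually takes, which the author explicitly flags as ``a new approach using scattering theory techniques.'' The two routes diverge at the construction of the distorted basis and its inversion. You build biorthogonal Jost families $\tilde\phi_\xi$, $\tilde\psi_\xi$ from the limiting absorption principle for $R_{\mathcal{H}}(z)$ on weighted $L^2$ spaces, recover $P_c$ by a Stone-type contour integral, and at high energy expand in a Born series whose terms are free resolvents sandwiched with $\mathcal{V}$. The paper instead passes to the scalar fourth-order operator $(-\Delta+\lambda^2-V_1)(-\Delta+\lambda^2-V_2)$, constructs $u_\xi=e^{ix\cdot\xi}+g_\xi$ with $g_\xi\in L^4$ by Picard iteration for large $|\xi|$ (using the $L^{4/3}\to L^4$ bound on the convolution kernel $K^{\pm}_{\xi_0}$ proved via Stein's complex interpolation) and Fredholm theory for small $|\xi|$, and then handles the non-self-adjointness of $\mathcal{H}$ by exhibiting the self-adjoint realization $L_-^{1/2}L_+L_-^{1/2}$ of $\mathcal{H}^2$, proving via a Beals/H\"ormander parametrix argument that $L_-^{1/2}$ is a pseudodifferential operator of class $S^1$, and conjugating by an explicit matrix $Q$ to diagonalize $\mathcal{H}P_c$. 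In effect, the matrix non-self-adjointness is absorbed into a PDO conjugation rather than into a biorthogonal pair. What your approach buys is directness and no PDO calculus; what the paper's buys is reduction to a genuinely self-adjoint scalar distorted-Plancherel theorem, at the cost of the $\Psi$DO machinery and the $L^4$ mapping lemma. Both end by splitting $1=\chi+(1-\chi)$ in $\xi$ and running stationary/non-stationary phase on the resulting oscillatory integrals. One refinement worth noting: your remark that the Hessian of the phase is $\pm 2I$ is only the leading term. The scattering tails of $g_\xi$ contribute $e^{i|x-z||\xi|}$-type factors, so the full Hessian is $2tI_d \pm \frac{|z|}{|\xi|}\bigl(I_d - \frac{\xi\otimes\xi}{|\xi|^2}\bigr)$, which degenerates to rank one on a sphere in $\xi$; the paper devotes a careful case split (around the sphere $|\xi_0^c|=|z|/2t$) to recovering $t^{-d/2}$ from one nondegenerate direction plus the $|z|^{-1}$ factor from the kernel. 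Your Born-series terms would meet exactly this degeneracy term by term, so you would need the same additional argument to close the high-energy case.
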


Let us define the space 
\begin{eqnarray*}
L^{1,M} = \{ f \in L^1 | \| \langle \cdot \rangle^N f (\cdot) \|_{L^1} \leq \infty, \ N = 0,1,\dots,2M \},
\end{eqnarray*} 
with norm $\| \cdot \|_{L^{1,M}}$ defined in the standard fashion.  

\begin{thm}
\label{thm:wlindec}
Let $\mathcal{H}$ be an admissible Hamiltonian as defined above.  Assume $\vec{\psi} \in L^{1,M}$ and
\begin{eqnarray}
\label{eqn:mom1}
\partial^\alpha_\xi \partial^\beta_{|\xi|} \vec{\Psi} (0) = 0,
\end{eqnarray}
for multi-indices $\alpha$, $\beta$ such that $|\alpha| + |\beta| = 0,1,2,\dots,2M$, where
\begin{eqnarray*}
\vec{\Psi} (\xi) = \int_y \tilde{\phi}_{\xi} (y) \vec{\psi} (y) dy.
\end{eqnarray*}  
Then,
\begin{eqnarray}
\label{eqn:lindecw}
\| e^{-c |x|} e^{i t \mathcal{H}} P_{c} \vec{\psi} \|_{L^\infty} \leq C t^{-\frac{d}{2}-M} \| \vec{\psi} \|_{L^{1,M}},
\end{eqnarray}
for any $c > 0$.
\end{thm}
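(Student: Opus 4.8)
The plan is to push everything through the distorted Fourier representation of $e^{it\mathcal{H}}P_c$ built in the earlier sections. Parametrizing the continuous spectrum of $\mathcal{H}$ by $\xi\in\reals^d$, so that the generalized eigenfunctions are the Jost-type solutions $\tilde\phi_\xi$ --- which for $|y|$ large split into an oscillatory ``open channel'' $\sim e^{i\xi\cdot y}\cdot(\text{bounded matrix})$ and an exponentially decaying ``closed channel'' --- and noting that on the continuous spectrum the flow carries the oscillatory factors $e^{\pm it(|\xi|^2+\lambda)}$ (the free Schr\"odinger phase shifted by the spectral gap $\lambda$), one obtains a formula of the schematic shape
\[
e^{it\mathcal{H}}P_c\vec\psi(x)\;=\;\sum_{\pm}\int_{\reals^d}e^{\pm it(|\xi|^2+\lambda)}\,a_\pm(x,\xi)\,\vec\Psi(\xi)\,d\xi,\qquad \vec\Psi(\xi)=\int_y\tilde\phi_\xi(y)\vec\psi(y)\,dy,
\]
where $a_\pm(x,\xi)$ packages $\tilde\phi_\xi(x)$ against its biorthogonal partner. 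The phase is, up to the unimodular constant $e^{\pm it\lambda}$, the free one, so its only critical point is $\xi=0$ with nondegenerate Hessian; this is what produces the $t^{-d/2}$ of \thmref{thm:tdec}. The weight $e^{-c|x|}$ plays two roles: it confines $x$ to a region where $a_\pm$ and all its $\xi$-derivatives are bounded (taming both the non-decaying open channel and the polynomial-in-$x$ factors produced by $\xi$-differentiating $\tilde\phi_\xi(x)$), and it forces the critical point $\xi_\ast=\mp x/(2t)$ of the full phase into a neighbourhood of $0$ for $t$ large, so the vanishing of $\vec\Psi$ there is actually felt.

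The first step is to convert the hypotheses on $\vec\psi$ into decay of $\vec\Psi$ near $\xi=0$. Differentiating $\vec\Psi(\xi)=\int_y\tilde\phi_\xi(y)\vec\psi(y)\,dy$ under the integral sign brings down powers of $y$, controlled by $\vec\psi\in L^{1,M}$, so $\|\partial^j\vec\Psi\|_{L^\infty}\lesssim\|\vec\psi\|_{L^{1,M}}$ for $j\le 2M$. The low-energy expansion of the Jost solutions runs in powers of $|\xi|$ with coefficients smooth in $\xi$ --- this regularity up to the thresholds being exactly what admissibility (no embedded eigenvalues, $0$ the only eigenvalue in $[-\lambda,\lambda]$, no resonances at $\pm\lambda$) secures --- so $\vec\Psi$ is only ``polar-$C^{2M}$'' at the origin, which is precisely why \eqref{eqn:mom1} is phrased with the mixed operators $\partial^\alpha_\xi\partial^\beta_{|\xi|}$. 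Combining the derivative bounds with \eqref{eqn:mom1} and Taylor's theorem in the variables $(|\xi|,\xi/|\xi|)$ yields the key bound
\[
\big|\partial^j\vec\Psi(\xi)\big|\;\lesssim\;|\xi|^{2M-j}\,\|\vec\psi\|_{L^{1,M}},\qquad |\xi|\le 1,\ \ j=0,1,\dots,2M,
\]
equivalently a factorization $\vec\Psi(\xi)=|\xi|^{2M}G(\xi)$ with $G$ bounded together with its polar derivatives up to a suitable order.

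Now insert a smooth frequency cutoff $1=\chi+(1-\chi)$ with $\chi$ supported in $\{|\xi|\le 1\}$. On the support of $1-\chi$ the phase is non-stationary for $t$ large (the weight $e^{-c|x|}$ keeping $x$ bounded), so repeated integration by parts in $\xi$ --- using the $\xi$-derivative control on $a_\pm$ and on $\vec\Psi$ together with the exponential decay of the closed channel --- gives a contribution $\lesssim t^{-d/2-M}\|\vec\psi\|_{L^{1,M}}$. On the support of $\chi$ one uses the factorization and, after pulling the constant $e^{\pm it\lambda}$ to the front, the identity $|\xi|^{2M}e^{\pm it|\xi|^2}=(\mp i)^M\partial_t^M e^{\pm it|\xi|^2}$: this reduces matters to applying $\partial_t^M$ to the stationary-phase asymptotics of $\int e^{\pm it|\xi|^2}\chi(\xi)G(\xi)a_\pm(x,\xi)\,d\xi$, which is $O(t^{-d/2})$ with an asymptotic expansion in powers of $t^{-1/2}$ starting at $t^{-d/2}$, so its $M$-th $t$-derivative is $O(t^{-d/2-M})$. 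Equivalently, rescaling $\xi=\eta/\sqrt t$ and passing to the radial variable $s=|\eta|^2$, one integrates by parts $M$ times against $e^{is}$, the vanishing of the amplitude to order $2M$ in $|\xi|$ killing the boundary terms at $s=0$ and the cutoff killing them at the far end. Adding the two contributions gives \eqref{eqn:lindecw}.

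The main obstacle is the input to the first step: pinning down the $|\xi|\to 0$ and $|x|,|y|\to\infty$ structure of the generalized eigenfunctions $\tilde\phi_\xi$ precisely enough that (a) $\vec\Psi$ really does vanish to order $2M$ at $\xi=0$ in the polar sense used above, with the radial/angular bookkeeping of \eqref{eqn:mom1}, and (b) $e^{-c|x|}a_\pm(x,\xi)$ and its $\xi$-derivatives are bounded. Both rest on the admissibility of $\mathcal{H}$ and on the scattering-theoretic construction of the distorted Fourier basis carried out in the earlier sections; granted that structure, the oscillatory-integral estimates above are the standard stationary and non-stationary phase arguments of \cite{KS1} and \cite{ES1}.
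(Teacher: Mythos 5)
Your proposal follows essentially the same plan as the paper: express $e^{it\mathcal{H}}P_c\vec\psi$ through the distorted Fourier representation built in the earlier sections, split with a smooth frequency cutoff $\chi$ near $\xi=0$, handle the high-frequency piece by repeated integration by parts in $\xi$ against the quadratic phase (with $e^{-c|x|}$ absorbing the polynomial growth in $x$ produced by $\xi$-derivatives of the distorted plane waves), and invoke the $2M$ moments conditions to control the low-frequency piece. The paper implements the low-frequency step by applying the operator $L=\langle\xi,\partial_\xi\rangle/(\pm 2it|\xi|^2)$ $M$ times, which produces $t^{-M}$ together with a $|\xi|^{-2M}$ singularity that \eqref{eqn:mom1} cancels; you instead factor $\vec\Psi(\xi)=|\xi|^{2M}G(\xi)$ out front and convert $|\xi|^{2M}e^{\pm it|\xi|^2}$ into $\partial_t^M$ of the free phase. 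These are the same calculation packaged in two different orders, and your version is arguably cleaner. The one place you gloss over more than the paper does is the low-frequency stationary-phase analysis itself: the amplitudes $a_\pm(x,\xi)$ carry factors like $e^{\pm i|x||\xi|}/|x|$ coming from the Jost solutions, so the full phase is not just $\pm t|\xi|^2$ but involves $|\xi|$-dependent pieces with critical points away from the origin; the paper treats these phases (and their rank-deficient Hessians) explicitly, whereas you fold them into the statement that $a_\pm$ is only ``polar-$C^{2M}$.'' You do acknowledge this in identifying (b) as the main obstacle, so the argument is not incomplete, just compressed. Given the scattering-theoretic input from the earlier sections, which you correctly isolate as the real content, the oscillatory-integral steps go through.
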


It should be noted that similar estimates were proven in the works \cite{ES1} and \cite{BW}, where in the first the techniques used were more along the lines of resolvent estimates and in the second the fact that the nonlinearities of interest were of even integer powers was crucial to the argument.  Here, we take an approach similar to that of scattering theory as presented in \cite{Ho2}.  Scattering theory is related to a resolvent approach most certainly, though there are certain benefits to the method we thought would be of general interest.  Note, these dispersive estimates are essential for the forthcoming argument in \cite{Mnl}, where perturbations of minimal mass solitons are analyzed.    

\section{General Distorted Fourier Basis Theory}
\label{lin:gdfb}

We present here a review of combined results from \cite{Ag} and \cite{Ho2}, Chapter 14.  Both presentations are valid for operators of the form
\begin{eqnarray*}
(P(D) + V(x,D)) u = 0,
\end{eqnarray*}
where $P(D)$ is a self-adjoint, constant coefficient differential operator and $V(x,D)$ is a short range, symmetric differential operator.  The perturbation $V(x,D)$ is defined to be short range in order to say that 
\begin{eqnarray*}
\lim_{z \to \lambda, \pm \Im z > 0} R(z) = R^{\pm} (z)
\end{eqnarray*}
exists in the uniform operator topology of $B(L^{2,s},\mathcal{H}_{2,-s})$, where
\begin{eqnarray*}
L^{2,s} (\reals^d) = \{ u(x) | (1+|x|^2)^{\frac{s}{2}} u(x) \in L^2 \}
\end{eqnarray*}
and
\begin{eqnarray*}
\mathcal{H}_{m,s} = \{ u(x) | D^\alpha u \in L^{2,s}, \ 0 \leq |\alpha| \leq m \}.
\end{eqnarray*}

Also, for any $f \in L^{2,s}$,
\begin{eqnarray*}
R^{\pm} (\lambda) f = R_0^\pm (\lambda ) f - R_0^\pm (\lambda ) V R^\pm (\lambda) f,
\end{eqnarray*}
where $R_0$ is the resolvent for the constant coefficient operator, $P$.  As the notion of short range deals with compactness of the operator $Z(u) = R (V u)$, being short range requires sufficient decay assumptions at $\infty$ on $V$.  Heuristically, it is required that the coefficients of $V$ decrease as fast as an integrable function in $|x|$ and for each fixed $x_0$, we have
\begin{eqnarray*}
\frac{V(x_0,\xi)}{P (x, \xi )} \to 0 \ \text{as} \ \xi \to \infty.
\end{eqnarray*}

The reasons why these heuristics hold true are explored below, hence we forgo this analysis here and move on with the fact that $V(x,D)$ is a short range perturbation as an assumption.  Note that in the case explored below, $V$ is Schwartz in $x$ and is dominated by $P(\xi)$ as $|\xi| \to \infty$.  It is also important to note that while contour integration works out nicely in $\RR^3$, the results presented here hold in any dimension where $R_0^+$ and $R_0^-$ are arrived at through a limiting procedure.

The Agmon approach to the distorted Fourier transform is equivalent to the approach taken by the author.  Namely, we define
\begin{eqnarray*}
\phi_{\pm} (x,\xi) = e^{ix \xi} - R^{\mp} (|\xi|) [ V e^{i x \cdot \xi}] (x).
\end{eqnarray*}
Then, the distorted Fourier transform is a map $\mathcal{F}_\pm :L^2 \to L^2$ such that \\
(i) $\text{Ker} (\mathcal{F}_\pm) = L^2_d$, where $L^2_d$ is the restriction of $L^2$ to the discrete spectrum of $P$.  Similarly, we have $L^2_c$, the restriction of $L^2$ to the continuous spectrum of $P$.  Then, the restriction of $\mathcal{F}_\pm$ is a unitary operator from $L^2_{c}$ onto $L^2$, \\
(ii) for any $ f \in L^2$
\begin{eqnarray*}
(\mathcal{F}_\pm f ) (\xi) = (2 \pi)^{-\frac{d}{2}} \lim_{N \to \infty} \int_{|x| < N} f(x) \overline{\phi_\pm (x,\xi)} dx \ \text{in $L^2_\xi$},
\end{eqnarray*}
and
\begin{eqnarray*}
(\mathcal{F^*}_\pm f ) (x) = (2 \pi)^{-\frac{d}{2}} \lim_{j \to \infty} \int_{K_j} f(\xi) \phi_\pm (x,\xi) d\xi \ \text{in $L^2_x$}
\end{eqnarray*}
where $K_j$ is an increasing sequence of compact sets such that $\cup_j K_j = \reals^d \setminus \mathcal{N}$ for 
\begin{eqnarray*}
\mathcal{N}(H) = \{ \xi \in \reals^d | |\xi|^2 \ \text{is an eigenvalue for $H$} \} \cup 0,
\end{eqnarray*}
and \\
(iii) If $P_{c}$ is the projection of $L^2$ onto $L^2_{c}$, then
\begin{eqnarray*}
(P_{c} H) f = (\mathcal{F}^*_\pm M_{P(\xi)} \mathcal{F}_\pm) f
\end{eqnarray*}
for any $f \in D(H)$ where $M_{P(\xi)}$ denotes multiplication by $P(\xi)$.  

In addition, we have $\| P_{c} f \|_{L^2} = \| \mathcal{F}_\pm f  \|_{L^2}$.  In other words, we have a Plancherel theorem for our distorted Fourier basis.  

Now, \cite{Ho2}, Chapter 14 arrives at the same conclusions using 
\begin{eqnarray*}
(\mathcal{F}_\pm f ) (\xi) = \mathcal{F} (I + V R_0^\pm)^{-1} f (\xi).
\end{eqnarray*}
However, using the resolvent identity
\begin{eqnarray*}
R(z) = R_0(z) (I + V R_0 (z))^{-1},
\end{eqnarray*}
we will see that a formal iteration shows equivalence between these definitions for $\xi$ large.  It is precisely this iteration we use below to get uniform bounds in $\xi$.  

\section{Convolution Kernels}
\label{s:ck}

In this section, we derive the integral kernel in $\RR^3$ for the inverse of the differential operator
\begin{eqnarray*}
P_{\mu} & = & -\Delta - |\xi_0|^2 \\
& = & -\Delta - \mu^2 ,
\end{eqnarray*}
where we have set $\mu = |\xi_0|$ for simplicity.
This will be quite useful in deriving the distorted Fourier basis functions for more complicated operators belows.

Specifically, given $u,f:\RR^3 \to \RR$, we find $K_\mu (x,y)$ such that if
\begin{eqnarray*}
P_{\mu} u = f,
\end{eqnarray*}
then
\begin{eqnarray*}
u = \int_{\RR^3} K_\mu (x,y) f(y) dy.
\end{eqnarray*}

To begin, we Fourier transform the equation to see
\begin{eqnarray*}
(\xi^2 - \mu^2) \hat{u} = \hat{f},
\end{eqnarray*}
hence
\begin{eqnarray*}
u = \mathcal{F}^{-1}[(\xi^2 - \mu^2)^{-1}]*f.
\end{eqnarray*}

So,
\begin{eqnarray*}
K_\mu (x,y) = \mathcal{F}^{-1}[(\xi^2 - \mu^2)^{-1}](x-y),
\end{eqnarray*}
if we can define 
\begin{eqnarray*}
G(x) = \mathcal{F}^{-1}[(\xi^2 - \mu^2)^{-1}](x)
\end{eqnarray*}
in a meaningful sense.  

Without loss of generality, set $\mu > 0$.  Initially, asssume that $x \neq 0$, though this will be easily seen as a limiting case in the end.  We have
\begin{eqnarray*}
G(x) & = & \int_{\RR^3} \frac{e^{ix \cdot \xi}}{(\xi^2 - \mu^2)} d\xi \\
& = & \int_0^\infty \int_0^{2\pi} \int_0^{\pi} \frac{e^{i |x| r cos(\theta)}}{(r-\mu)(r+\mu)} r^2 \sin (\theta) d\theta d\phi dr \\
& = & \frac{4 \pi}{|x|} \int_0^\infty \frac{r \sin(|x|r)}{(r-\mu)(r+\mu)} dr
\end{eqnarray*}
by first making a rotational change of variables where $\xi_3 \to \frac{x}{|x|}$, then using polar coordinates.  

Now, we are set up to use contour integration to find $G(x)$.  See Figure \ref{fig:fund1} for the contour over which we integrate.  We call this contour $\Gamma_{R,\epsilon}$.  

\begin{figure}
\scalebox{0.7}{\includegraphics{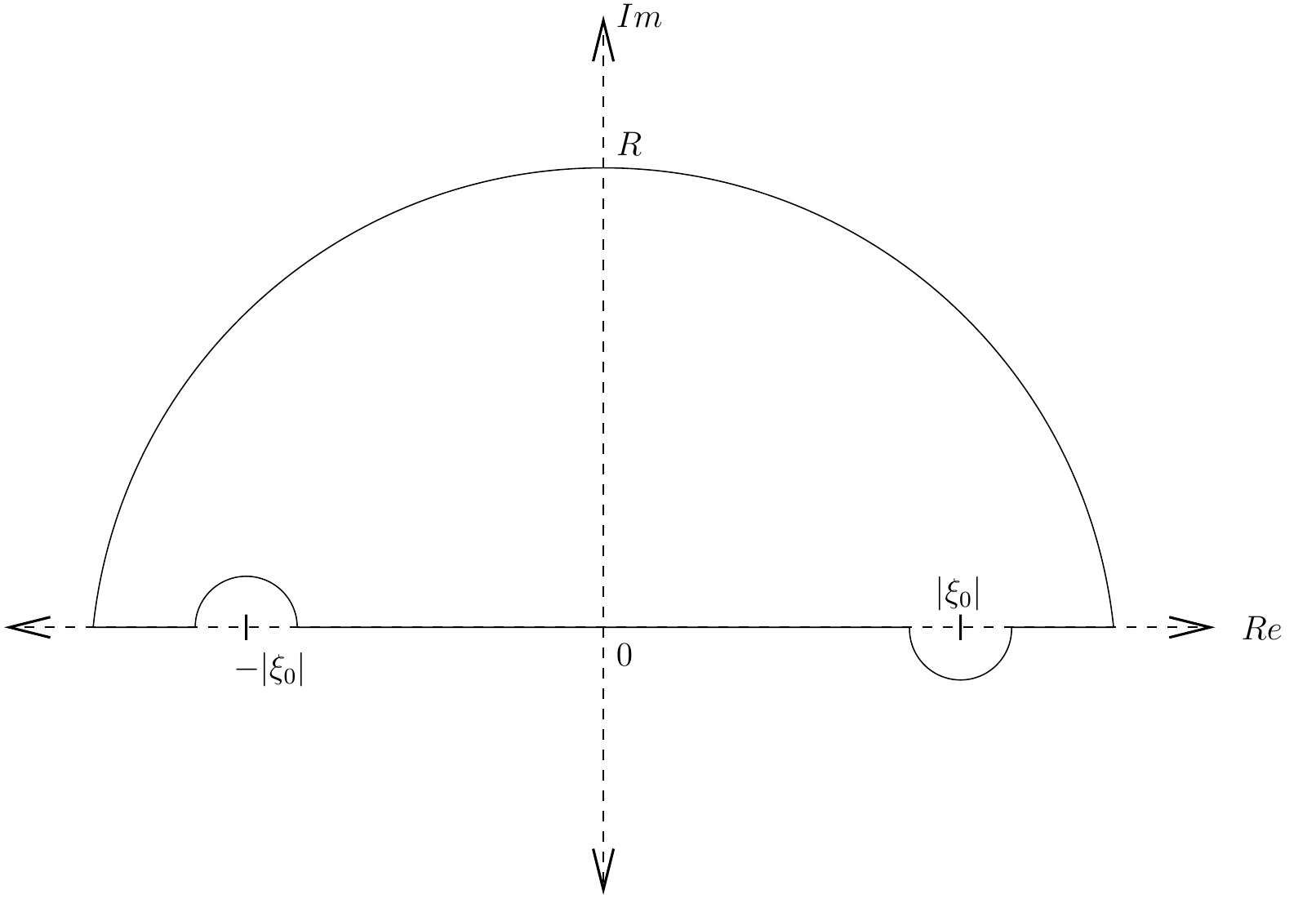}}
\caption{The contour for computing the behavior of $K_\mu$}
\label{fig:fund1}
\end{figure}

Then, we have from residue theory
\begin{eqnarray*}
\int_{\Gamma_{R,\epsilon}} \frac{z e^{iz|x|}}{(z-\mu)(z+\mu)} dz & = & 
2 \pi i \left[ \frac{\mu e^{i|x|\mu}}{2\mu} \right] \\
& = & \pi i e^{i|x|\mu}.
\end{eqnarray*}

However, breaking $\Gamma$ down, we also have
\begin{eqnarray*}
\int_{\Gamma_{R,\epsilon}} \frac{z e^{iz|x|}}{(z-\mu)(z+\mu)} dz = 2 i \int_0^R \frac{r \sin(|x|r)}{(r-\mu)(r+\mu)} dr + \frac{\pi i e^{i|x|\mu}}{2} - \frac{\pi i e^{-i|x|\mu}}{2}.
\end{eqnarray*}

Combining terms and taking $R \to \infty$, we have
\begin{eqnarray*}
G(x) = \frac{4 \pi^2 \cos(\mu |x|)}{|x|}.
\end{eqnarray*}
This is valid for all $x$ since the integral diverges as $x \to 0$.

Using simple residue theory, taking the distributional conventions
\begin{eqnarray*}
f( \lambda ) = f(\lambda + i0)
\end{eqnarray*}
or
\begin{eqnarray*}
f( \lambda ) = f(\lambda - i0)
\end{eqnarray*} 
result in 
\begin{eqnarray}
\label{eqn:G}
G^{\pm} (x) = \frac{4 \pi^2 e^{\pm i |x| \mu}}{|x|}.
\end{eqnarray}
To see this, define
\begin{eqnarray*}
G^+_\epsilon (x) & = & \mathcal{F}^{-1} [ (\xi^2 - (\mu+i\epsilon)^2)^{-1}] (x) \\
& = & \mathcal{F}^{-1} [ ( (|\xi| - \mu-i\epsilon) (|\xi| + \mu + i \epsilon) )^{-1}] (x).
\end{eqnarray*}
Now, we may make the same change of variables and do contour integration as above, though in this case we need not worry about avoiding $\pm |\xi_0|$.  So, our contour $\Gamma_{R,0}$ is the hemisphere on the upper half plane formed by the real axis and the half circle of radius say $R > \mu$.  The only residue in such a region would be given by $ z = \mu + i \epsilon$ as $z = \mu - i \epsilon$ is outside $\Gamma_{R,0}$.  For each $\epsilon$, we then have
\begin{eqnarray*}
G^+_\epsilon (x) = \frac{4\pi}{|x|} e^{i |x| \mu} e^{-|x| \epsilon}.
\end{eqnarray*}
Taking $\epsilon \to 0$ gives formula \eqref{eqn:G} for $G^+$.  The analysis for $G^-$ is similar.

The above analysis is then easily seen to be equivalent to applying to the distributional connvention
\begin{eqnarray*}
f( \lambda ) = \frac{1}{2} \left[ f(\lambda + i0) + f(\lambda - i0) \right],
\end{eqnarray*}
namely the case where both residues lying on the real axis must be taken into account.  However, since our eventual goal is to work with oscillatory integrals, for convenience and without loss of generality, we will work with the complex operator
$G (x) = G^+ (x)$.

\section{Distorted Fourier Basis}
\label{lin:dfb}

Note that in the sequel, we take the convention that the soliton parameter is $\lambda^2$ instead of $\lambda$.  This serves to remind the reader of the positivity of this parameter.  The convention of $\lambda$ slightly simplifies the variational formulation, but has no impact on the linear analysis presented here.

We seek to understand the functions in the continuous spectrum of $\mathcal{H}$ by decomposing them using a distorted Fourier basis given by 
\begin{eqnarray}
\label{eqn:dfbe}
(-\Delta + \lambda^2 - V_1)(-\Delta + \lambda^2 - V_2) u_{\xi_0} =  (\lambda^2 + |\xi_0|^2)^2 u_{\xi_0},
\end{eqnarray}
where $u_{\xi_0} = e^{i x \xi_0} + g_{\xi_0}$ and $g_{\xi_0}$ is yet to be determined.  

From \eqref{eqn:dfbe},
\begin{eqnarray*}
[(-\Delta + \lambda^2)^2-(\lambda^2 + |\xi_0|^2)^2] u_{\xi_0} & = & (-\Delta + \lambda^2) V_2 u_{\xi_0} + V_1 (-\Delta + \lambda^2 - V_2) u_{\xi_0}. 
\end{eqnarray*}
Hence,
\begin{eqnarray}
\label{eqn:dfbeq}
[(-\Delta + \lambda^2)^2-(\lambda^2 + |\xi_0|^2)^2] g_{\xi_0} & = & F_{\xi_0} (x) e^{i x \xi_0} + \tilde{V} (x,D) g_{\xi_0},
\end{eqnarray}
where
\begin{eqnarray*}
\tilde{V} (x,D) = V_1 (-\Delta + \lambda^2 - V_2),
\end{eqnarray*}
and $F_{\xi_0} (x)$ is a Schwartz function.  Then, taking the Fourier Transform, we have
\begin{eqnarray*}
[(|\xi|^2 + \lambda^2)^2 - (|\xi_0|^2+\lambda^2)^2] \hat{g}_{\xi_0} = 
\hat{F}(\xi;\xi_0) + ( \tilde{V}_{\mathcal{F}} \hat{g}_{\xi_0} ) (\xi),
\end{eqnarray*}
where
\begin{eqnarray*}
( \tilde{V}_{\mathcal{F}} g ) (\xi) & = & \lambda^2 (\hat{V}_2 + \hat{V}_1) * (g) + (|\xi|^2 \hat{V}_2) * (g) + (\hat{V}_2 + \hat{V}_1) * (|\xi|^2 g) \\
& + & (\xi \hat{V}_2) * ( \xi g) - (\widehat{V_1 V_2})*(g).
\end{eqnarray*}

Given
\begin{eqnarray*}
L_{\xi_0} & = &  [(|\xi|^2 + \lambda^2)^2 - (|\xi_0|^2+\lambda^2)^2] \\
& = & [(|\xi|+|\xi_0|)(|\xi| - |\xi_0|)(|\xi|^2 + 2 \lambda^2 + |\xi_0|^2)],
\end{eqnarray*}
we have
\begin{eqnarray*}
g_{\xi_0} & = & \mathcal{F}^{-1} \left\{ \{ L_{\xi_0}^{\pm} \}^{-1} ( \hat{F} + \tilde{V}_{\mathcal{F}} \hat{g}_{\xi_0}) \right\} \\
& = & K^{\pm}_{\xi_0} * F_{\xi_0} + K^{\pm}_{\xi_0} * (\tilde{V} (x,D) g_{\xi_0}),
\end{eqnarray*}
where 
\begin{eqnarray*}
K^{\pm}_{\xi_0} (x)  = (\mathcal{F}^{-1} \{ L_{\xi_0}^{\pm} \}^{-1}) (x)
\end{eqnarray*}
and
\begin{eqnarray*}
L_{\xi_0}^{\pm} = [(|\xi|+|\xi_0| \pm i0)(|\xi| - |\xi_0| \mp i0)(|\xi|^2 + 2 \lambda^2 + |\xi_0|^2)].
\end{eqnarray*}
Note that for simplicity we have omitted a small complex perturbation in the elliptic term $(|\xi|^2 + 2 \lambda^2 + |\xi_0|^2)$ since it does not effect the analysis.

To explore $K^{\pm}_{\xi_0}$ further, we see in $\R^3$
\begin{eqnarray*}
\int_{\xi} \frac{e^{i \xi \cdot x}}{(|\xi|+|\xi_0| \pm i0)(|\xi| - |\xi_0| \mp i0)(|\xi|^2 + 2 \lambda^2 + |\xi_0|^2)} d\xi = \\ 
\int_{\R^3}  \frac{e^{i \xi_1 |x|}}{(|\xi|+|\xi_0| \pm i0 )(|\xi| - |\xi_0| \mp i0)(|\xi|+|\xi_0|)(|\xi|^2 + 2 \lambda^2 + |\xi_0|^2)} d\xi,
\end{eqnarray*}
using the change of variables $\xi_1 \to \frac{x}{|x|}$.  Then, we have
\begin{eqnarray*}
\int^{2 \pi}_0 \int^\pi_0 \int_0^R \frac{e^{r \cos(\theta) |x|}}{(r + |\xi_0| \pm i0) (r - |\xi_0| \mp i0)(r^2 + 2 \lambda^2 + |\xi_0|^2)} r \sin(\theta) dr d\theta d\phi.
\end{eqnarray*}
Doing integration first in $\theta$, then a contour integral, we have as in Section \ref{s:ck} that
\begin{eqnarray*}
\label{eqn:K}
K^{\pm}_{\xi_0} = \hat{L^{-1}_{\xi_0}} = \frac{\pi^2}{|\xi_0|^2+\lambda^2} \left[\frac{e^{\pm i |x| |\xi_0|} - e^{-|x|\sqrt{|\xi_0|^2+2 \lambda^2}}}{|x|}\right].
\end{eqnarray*}
For simplicity, we take $K(x) = K^{+}_{\xi_0} (x)$ as the analysis for $K^{-}_{\xi_0}$ will be similar.
Then, we want to use an iterative argument to show that for mid to high range frequencies, these distorted Fourier bases exist in $L^4$.  It will become clear in the sequel why $L^4$ is chosen. Note that since near $0$, $K$ is bounded, we have $K \in L^{3+s}$ for any $s>0$.  In particular we show the following:

\begin{figure}
\includegraphics{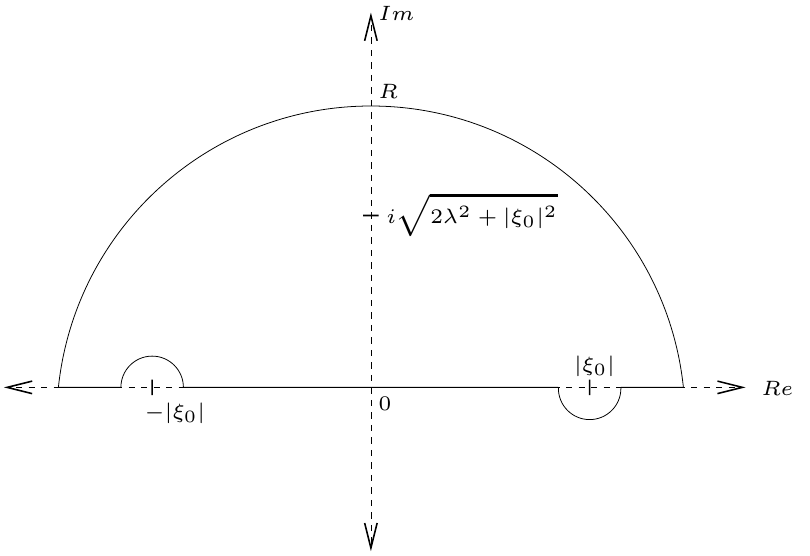}
\caption{The contour for computing the behavior of the fundamental solution in the limiting case.}
\label{fig:fund}
\end{figure}

\begin{lem}
\label{lem:K}
For the operator $K^{\pm}$ defined in Equation \eqref{eqn:K}, we have
\begin{eqnarray*}
K^{\pm} : L^{\frac{4}{3}} \to L^4 \ (O((|\xi_0|^2 + \lambda^2)^{-1} |\xi_0|^{-\frac{1}{2}})). 
\end{eqnarray*}
\end{lem}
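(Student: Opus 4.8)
The plan is to read off the mapping properties of $K^{\pm}$ directly from the explicit formula in \eqref{eqn:K} by splitting the kernel into two pieces and bounding each via a Young's inequality / fractional integration argument. Write
\begin{eqnarray*}
K^{\pm}_{\xi_0}(x) = \frac{\pi^2}{|\xi_0|^2 + \lambda^2}\left[\frac{e^{\pm i|x||\xi_0|}}{|x|} - \frac{e^{-|x|\sqrt{|\xi_0|^2 + 2\lambda^2}}}{|x|}\right] =: \frac{\pi^2}{|\xi_0|^2+\lambda^2}\bigl(K_{\mathrm{osc}}(x) - K_{\mathrm{exp}}(x)\bigr).
\end{eqnarray*}
The prefactor $(|\xi_0|^2 + \lambda^2)^{-1}$ is harmless and is simply pulled out; it accounts for that factor in the claimed bound. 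So the real content is to show that convolution with $K_{\mathrm{osc}} - K_{\mathrm{exp}}$ maps $L^{4/3} \to L^4$ with operator norm $O(|\xi_0|^{-1/2})$.

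First I would handle the exponentially decaying piece $K_{\mathrm{exp}}(x) = |x|^{-1} e^{-|x|\sqrt{|\xi_0|^2 + 2\lambda^2}}$. This is an $L^1_{\mathrm{loc}}$ function with a mild $|x|^{-1}$ singularity at the origin and exponential decay at infinity, so it lies in $L^r(\R^3)$ for every $r \in [1,3)$; in particular $K_{\mathrm{exp}} \in L^{3/2}$. By Young's inequality, convolution with an $L^{3/2}$ function maps $L^{4/3} \to L^4$ since $1 + \tfrac14 = \tfrac34 + \tfrac13 \cdot \tfrac{?}{}$ — more precisely $\tfrac{1}{4/3} + \tfrac{1}{3/2} - 1 = \tfrac34 + \tfrac23 - 1 = \tfrac14$, which is exactly $1/4$, so $L^{4/3} * L^{3/2} \subset L^4$. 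Then a scaling computation in the parameter $a := \sqrt{|\xi_0|^2 + 2\lambda^2}$ shows $\|K_{\mathrm{exp}}\|_{L^{3/2}} = a^{-1}\| |y|^{-1} e^{-|y|} \|_{L^{3/2}} \cdot (\text{const})$, hence $\|K_{\mathrm{exp}}\|_{L^{3/2}} = O(a^{-1}) = O(|\xi_0|^{-1})$ for large $|\xi_0|$, which is better than the required $O(|\xi_0|^{-1/2})$.

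The oscillatory piece $K_{\mathrm{osc}}(x) = |x|^{-1} e^{\pm i|x||\xi_0|}$ is the main obstacle, because it is \emph{not} in any better $L^r$ than $|x|^{-1}$ is (the oscillation does not help at the origin and the kernel decays only like $|x|^{-1}$ at infinity, so $K_{\mathrm{osc}} \notin L^r$ for $r \le 3$). Here one cannot use Young's inequality directly. Instead I would dyadically decompose: write $K_{\mathrm{osc}} = \sum_j \chi_j K_{\mathrm{osc}}$ where $\chi_j$ is supported on $|x| \sim 2^j |\xi_0|^{-1}$. On the region $|x| \lesssim |\xi_0|^{-1}$ (roughly one period), the oscillation is irrelevant, $K_{\mathrm{osc}}$ is comparable to $|x|^{-1}\mathbf{1}_{|x|\lesssim|\xi_0|^{-1}}$ which is in $L^{3/2}$ with norm $O(|\xi_0|^{-1})$, so Young again gives an $L^{4/3}\to L^4$ bound of the right (indeed better) size. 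On the region $|x| \gtrsim |\xi_0|^{-1}$, one must exploit the oscillation — this is a stationary-phase / oscillatory-integral estimate. The cleanest route is to note that convolution with $|x|^{-1} e^{i|x||\xi_0|}$ is, up to constants, the free outgoing resolvent $R_0(|\xi_0|^2 + i0)$ of $-\Delta$ in $\R^3$ (see the formula \eqref{eqn:G} for $G^{\pm}$), and the limiting absorption / Agmon–Kato–Kuroda estimates give precisely $\|R_0(|\xi_0|^2 \pm i0)\|_{L^{4/3}\to L^4} = O(|\xi_0|^{-1/2})$ — this is the sharp Kato–Jensen type bound, and the exponent $4$ (with dual $4/3$) is exactly the Stein–Tomas/Sobolev-admissible pair in dimension three for which the resolvent bound holds with the gain $|\xi_0|^{-1/2}$. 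This is where the choice of $L^4$ in the statement comes from, as the paper anticipates. So the proof reduces to: (i) pull out the scalar prefactor; (ii) dispatch $K_{\mathrm{exp}}$ by Young with an $O(|\xi_0|^{-1})$ bound; (iii) for $K_{\mathrm{osc}}$, either cite the free-resolvent $L^{p'}\to L^p$ estimate of Kenig–Ruiz–Sogge / Agmon directly, or reprove it by the near-origin Young estimate plus a dyadic stationary-phase bound on the oscillatory tails. Combining the three contributions and taking the worst decay rate, $O(|\xi_0|^{-1/2})$, yields the claim, with the overall $(|\xi_0|^2 + \lambda^2)^{-1}$ factor from step (i). I expect the only delicate point to be making the dyadic oscillatory estimate quantitative in $|\xi_0|$ if one does not simply invoke the known resolvent bound; citing \cite{Ho2} or the Agmon theory recalled in Section~\ref{lin:gdfb} should let one bypass that grind.
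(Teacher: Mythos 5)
Your route is genuinely different from the paper's. The paper does not split $K$ into oscillatory and exponential pieces; instead it introduces the analytic family of multipliers $\tilde K^a_{\xi_0}=\mathcal{F}^{-1}\bigl[(|\xi|^2-(|\xi_0|+i0)^2)^{-a}\bigr]$, computes the three endpoints $\tilde K^0=\delta$ (so $L^2\to L^2$, $O(1)$), $\tilde K^2(x)=i2\pi^2 e^{i|x||\xi_0|}/|\xi_0|$ (a bounded function, so $L^1\to L^\infty$, $O(|\xi_0|^{-1})$), and $\tilde K^1(x)=4\pi^2e^{i|x||\xi_0|}/|x|$, checks that purely imaginary powers $(|\xi|\pm|\xi_0|)^{-is}$ are bounded Fourier multipliers, and then applies Stein complex interpolation to land at $a=1$ on $L^{4/3}\to L^4$ with the geometric mean gain $O(|\xi_0|^{-1/2})$. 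Your step (iii) is really the same theorem — the $L^{p'}\to L^p$ free resolvent estimate of Kenig--Ruiz--Sogge/Agmon is itself proved by exactly this Stein interpolation — so at the conceptual level the two proofs agree on the hard part; the difference is that you invoke it as a black box while the paper re-derives it inline, and the paper keeps the full kernel $L_{\xi_0}^{-a}$ together (the elliptic factor $|\xi|^2+2\lambda^2+|\xi_0|^2$ is uniformly nondegenerate, so it rides along harmlessly) whereas you peel off the exponential tail first. Both are fine organizationally.

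There is, however, a concrete arithmetic error in your step (ii). Young's inequality reads $L^p * L^q\subset L^r$ with $1+\tfrac1r=\tfrac1p+\tfrac1q$; with $p=\tfrac43$, $q=\tfrac32$ one gets $\tfrac1r=\tfrac34+\tfrac23-1=\tfrac{5}{12}$, i.e.\ $L^{4/3}*L^{3/2}\subset L^{12/5}$, not $L^4$ (and your own displayed computation $\tfrac34+\tfrac23-1=\tfrac14$ is wrong, since $\tfrac34+\tfrac23=\tfrac{17}{12}$). To land in $L^4$ from $L^{4/3}$ by Young you need the kernel in $L^2$. Fortunately $K_{\mathrm{exp}}(x)=|x|^{-1}e^{-a|x|}$ with $a=\sqrt{|\xi_0|^2+2\lambda^2}$ does lie in $L^2(\R^3)$ (the local singularity is $|x|^{-2}\in L^1_{\mathrm{loc}}$ after taking squares and integrating against $r^2\,dr$), and the scaling gives $\|K_{\mathrm{exp}}\|_{L^2}=O(a^{-1/2})=O(|\xi_0|^{-1/2})$, matching rather than beating the required rate. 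So the conclusion survives, but you should replace $L^{3/2}$ by $L^2$ and correct the claimed $O(|\xi_0|^{-1})$ decay to $O(|\xi_0|^{-1/2})$.
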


\begin{proof}
We actually prove the result for
\begin{eqnarray*}
\tilde{K}^a_{\xi_0} f (x) & = & \mathcal{F}^{-1} \left( \frac{1}{(|\xi|^2 - (|\xi_0| + i0)^2 )^a} f \right) \\
& = & \int k_a (x,y) f(y) dy.
\end{eqnarray*}
The proof for $K$ will be essentially the same.

Using distribution theory, we have for $s \in \RR$
\begin{eqnarray*}
\tilde{K}^{0}_{\xi_0} (x) & = & \delta (x),  \\
\tilde{K}^{1}_{\xi_0} (x) & = & \frac{4 \pi^2}{|x|} [e^{i |x||\xi_0|}], \\
\tilde{K}^{2}_{\xi_0} (x) & = & \frac{i 2 \pi^2 e^{i |x| |\xi_0|}}{ |\xi_0|}.
\end{eqnarray*}
As convolution operators, 
\begin{eqnarray*}
\tilde{K}^0 : L^2 \to L^2 \ (O(1))
\end{eqnarray*}
and
\begin{eqnarray*}
\tilde{K}^2 : L^1 \to L^\infty \ (O(|\xi_0|^{-1}))  ,
\end{eqnarray*}
hence we wish to wish to define $K^s$ in such a way as to preserve these estimates and such that $k_s$ is analytic for $0 < \text{Re} (s) < 2$ and continuous for $0 \leq \text{Re} (s) \leq 2$.  However, after making a branch cut on the left half of the real axis, for $s \in \RR$ we have
\begin{eqnarray*}
\| (|\xi|+|\xi_0|)^{-is} (|\xi| - |\xi_0|)^{-is} f(\xi) \|_{L^\infty_\xi} \lesssim \| f \|_{L^\infty_\xi},
\end{eqnarray*}
and continuity on $0 \leq \text{Re} (s) \leq 2$ follows easily on a strip in the complex plane.  For analyticity inside the strip, it is clear any factors gained taking derivatives will be logarithmic and hence controlled by the polynomially decaying coefficients from $\text{Re} (s)$.  Hence, using complex interpolation
\begin{eqnarray*}
\tilde{K}^1 : L^{\frac{4}{3}} \to L^4 \ (O(|\xi_0|^{-\frac{1}{2}})). 
\end{eqnarray*}

\end{proof}

For simplicity, we from now on write $\tilde{K}$ instead of $\tilde{K}_{\xi_0}^1$.  Now, we seek to analyze the equation
\begin{eqnarray}
\label{eqn:dfbu}
g_{\xi_0} = K^{\pm}_{\xi_0} * F_{\xi_0} + K^{\pm}_{\xi_0} * (\tilde{V} (x,D) g_{\xi_0}),
\end{eqnarray}
In particular, we have the following:

\begin{thm}
\label{thm:dfbu}
Let $P(x,D)$ be a differential operator of the form
\begin{eqnarray*}
P(x,D) = (-\Delta + \lambda^2 - V_1)(-\Delta + \lambda^2 - V_2),
\end{eqnarray*}
where $V_1$, $V_2 \in \mathcal{S}$.  Assuming that there are no eigenvalues embedded in the continuous spectrum $[\lambda^4,\infty)$, there exists $g^{\pm}_{\xi_0} \in L^4$ such that Equation \eqref{eqn:dfbu} is satisfied for $u_{\xi_0} = e^{i x \xi_0} + g^{\pm}_{\xi_0} (x)$.  We have 
\begin{eqnarray*}
g^{\pm}_{\xi_0} (x) = K^{\pm} * [f_0 (\cdot, \xi_0, |\xi_0|)],
\end{eqnarray*}
where $f_0$ is smooth in $x$, $\xi_0$, $|\xi_0|$, and
\begin{eqnarray*}
| \langle x \rangle^N \partial_x^\alpha f_0| \lesssim 1.
\end{eqnarray*}  
Moreover, there exists a value $M$ such that 
for $\xi_0 \geq M$, 
\begin{eqnarray*}
f_0(x,\xi_0) = e^{i (x, \xi_0)} f (x,\xi_0),
\end{eqnarray*}
where 
\begin{eqnarray}
\label{eqn:symb}
|\langle x \rangle^{N} \partial^\alpha_{\xi_0} \partial^\beta_x f (x, \xi_0) | \lesssim |\xi_0|^{2-|\alpha|},
\end{eqnarray}
for any multi-indices $\alpha$ and $\beta$, $N > 0$.
\end{thm}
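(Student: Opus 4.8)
The plan is to solve the fixed-point equation \eqref{eqn:dfbu} by a Neumann series, controlling the iteration uniformly in $\xi_0$ by using \lemref{lem:K} as the key mapping estimate. Write $g_{\xi_0} = \sum_{j\geq 0} g_{\xi_0}^{(j)}$ where $g_{\xi_0}^{(0)} = K^\pm_{\xi_0} * F_{\xi_0}$ and $g_{\xi_0}^{(j+1)} = K^\pm_{\xi_0} * (\tilde V(x,D) g_{\xi_0}^{(j)})$. First I would establish that the map $h \mapsto K^\pm_{\xi_0} * (\tilde V(x,D) h)$ is a contraction on $L^4$ for $|\xi_0|$ large: by \lemref{lem:K} the convolution $K^\pm_{\xi_0}$ gains a factor $(|\xi_0|^2+\lambda^2)^{-1}|\xi_0|^{-1/2}$ from $L^{4/3}$ to $L^4$, while $\tilde V(x,D) = V_1(-\Delta + \lambda^2 - V_2)$ is a second-order operator with Schwartz coefficients, so it maps $L^4 \to L^{4/3}$ with an operator norm growing at most polynomially — in fact, since the two derivatives hit a Schwartz weight $V_1$, the gain from $K$ beats the two powers of frequency coming from $-\Delta$ composed against the Fourier multiplier structure (this is precisely the point of the parenthetical remark after \eqref{eqn:K} that the elliptic factor $(|\xi|^2 + 2\lambda^2 + |\xi_0|^2)$ was dropped harmlessly). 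Hence for $|\xi_0| \geq M$ with $M$ large enough, the series converges geometrically in $L^4$, and for $|\xi_0| < M$ the absence of embedded eigenvalues gives invertibility of $I - K^\pm_{\xi_0}\tilde V(x,D)$ on the relevant space by a Fredholm argument (the operator is compact because $K^\pm_{\xi_0}$ smooths and $\tilde V$ localizes), so $g^\pm_{\xi_0}$ exists for all $\xi_0$.

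Next I would extract the structural claim $g^\pm_{\xi_0}(x) = K^\pm * [f_0(\cdot,\xi_0,|\xi_0|)]$ with $f_0$ smooth and rapidly decaying in $x$. The idea is to rewrite the full solution as $g^\pm_{\xi_0} = K^\pm_{\xi_0} * \big(F_{\xi_0} + \tilde V(x,D)(I - K^\pm_{\xi_0}\tilde V(x,D))^{-1}K^\pm_{\xi_0}*F_{\xi_0}\big)$, so that $f_0 := F_{\xi_0} + \tilde V(x,D)(\cdots)$; since $F_{\xi_0}$ is Schwartz in $x$ (with the $e^{ix\xi_0}$ factored out, it is $F_{\xi_0}(x)$ times oscillation — recall $F_{\xi_0}(x)e^{ix\xi_0}$ arose from applying the Schwartz potentials to $e^{ix\xi_0}$), and since $\tilde V$ has Schwartz coefficients while the resolvent $(I - K^\pm_{\xi_0}\tilde V)^{-1}$ preserves spatial decay, one gets $|\langle x\rangle^N \partial_x^\alpha f_0| \lesssim 1$ uniformly. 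Smoothness in $\xi_0$ and $|\xi_0|$ follows by differentiating the Neumann series term by term, using that $K^\pm_{\xi_0}$ depends smoothly on the parameters away from the branch locus.

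For the large-frequency symbol estimate \eqref{eqn:symb} I would track powers of $|\xi_0|$ carefully through one more iteration of the resolvent identity. Writing $u_{\xi_0} = e^{ix\xi_0} + g^\pm_{\xi_0}$, the leading contribution to $f_0$ for large $\xi_0$ is $F_{\xi_0}(x)e^{ix\xi_0}$ itself, and $F_{\xi_0}(x)$ is built from $V_1$, $V_2$ and their derivatives contracted against at most two powers of $\xi_0$ (the top-order term in $(-\Delta + \lambda^2 - V_1)(-\Delta + \lambda^2 - V_2)$ acting on the oscillation produces $|\xi_0|^2$), which gives the $|\xi_0|^{2-|\alpha|}$ bound since each $\partial_{\xi_0}$ either lands on the polynomial symbol (lowering the degree by one) or on the oscillation $e^{ix\xi_0}$ (producing a harmless factor of $x$ absorbed by $\langle x\rangle^{-N}$). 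The correction terms from the Neumann series carry the extra decay $(|\xi_0|^2+\lambda^2)^{-1}|\xi_0|^{-1/2}$ per iteration and so are lower order. The main obstacle — and the step deserving the most care — is the uniform-in-$\xi_0$ contraction estimate in the first paragraph: one must verify that the two derivatives in $\tilde V(x,D)$ do not generate a net growth in $\xi_0$ that overwhelms the $|\xi_0|^{-1/2}$ gain from $K$, which requires either commuting the Laplacian past the oscillation and exploiting the Schwartz decay of $V_1$, or an integration-by-parts/non-stationary-phase argument in the convolution kernel $k_1(x,y)$ itself; this is where the precise choice of $L^4$ (rather than, say, $L^\infty$) is forced, since it is exactly the space in which complex interpolation delivers the critical fractional gain.
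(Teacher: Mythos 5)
Your overall strategy — Picard/Neumann iteration with \lemref{lem:K} as the driving estimate for $|\xi_0|$ large, Fredholm theory exploiting compactness of $K*\tilde V(x,D)$ and the no-embedded-eigenvalue hypothesis for $|\xi_0|$ small — is exactly the decomposition the paper uses, so the architecture is right. However, there are three places where the sketch as written has real gaps.

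First, the contraction estimate. You assert that $\tilde V(x,D) = V_1(-\Delta+\lambda^2-V_2)$ maps $L^4\to L^{4/3}$ with polynomially bounded norm, explaining this by ``the two derivatives hit a Schwartz weight $V_1$.'' This is not correct: in the composition $V_1(-\Delta+\lambda^2-V_2)u$, the Laplacian falls on $u\in L^4$, not on $V_1$, so no such bound exists. The actual mechanism (which the paper carries out) is to study $K*\tilde V(x,D)u$ directly and integrate by parts so the derivatives land on the kernel $K$, producing a sum of the form $\|K*Vu\|_{L^4} + \|\nabla K*\bar Vu\|_{L^4} + \|\Delta K*\bar{\bar V}u\|_{L^4}$ with $V,\bar V,\bar{\bar V}\in\mathcal S$. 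Each term is then controlled separately: $\nabla K=O(|y|^{-1})$ feeds into Hardy--Littlewood--Sobolev, and when all derivatives land on $K$ one rewrites $-\Delta$ as $(-\Delta-|\xi_0|^2)+|\xi_0|^2$ and uses the defining PDE for $K$ so that the bad factor $|\xi_0|^2$ is cancelled by the $(\xi_0^2+\lambda^2)^{-1}$ prefactor in $K$. Without this reduction, the claimed net gain of $|\xi_0|^{-1/2}$ does not follow.

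Second, smoothness in $\xi_0$ for small $|\xi_0|$. You write that smoothness in $\xi_0$ and $|\xi_0|$ ``follows by differentiating the Neumann series term by term,'' but for $|\xi_0|<M$ the series does not converge and there is no Neumann representation to differentiate. The paper handles this region by the factorization $\tilde V=\tilde V_1\tilde V_2$ into exponentially weighted factors and the resolvent identity $\tilde V g_{\xi_0} = \tilde V_1(I-\tilde V_2 K\tilde V_1)^{-1}\tilde V_2(K*(\tilde V e^{i\cdot\xi_0}))$, which exhibits $(I-\tilde V_2 K\tilde V_1)^{-1}$ as analytic in a complex neighborhood of $|\xi_0|$. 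Some such argument, or an alternative difference-quotient argument such as the one the paper also gives, is needed to make the small-frequency smoothness claim go through.

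Third, the symbol bound \eqref{eqn:symb}. Your sketch says each $\partial_{\xi_0}$ either lowers the degree of a polynomial symbol or produces a harmless factor of $x$. But $g_{\xi_0}=K^\pm_{\xi_0}*f_0$, and $K^\pm_{\xi_0}$ itself depends on $\xi_0$ through $e^{\pm i|x||\xi_0|}$; naively differentiating that produces a factor of $|x|$ \emph{outside} the integral that is not absorbed by the decay of $f_0$. The paper resolves this by the commutation identities
\begin{eqnarray*}
(\partial_x - i\xi_0)\bigl(K*[e^{i(\cdot,\xi_0)}\phi]\bigr)(x) &=& \int K(y)\,e^{i(x-y)\xi_0}\,\phi_x(x-y,\xi_0)\,dy,\\
(\partial_{\xi_0}-ix)\bigl(K*[e^{i(\cdot,\xi_0)}\phi]\bigr)(x) &=& \Bigl(\tfrac{1}{\sqrt{\xi_0^2+2\lambda^2}}-\tfrac{1}{|\xi_0|}\Bigr)\int K(y)e^{iy\xi_0}y\phi(y,\xi_0)\,dy \\
&& {}+ \int K(y)e^{i(x-y)\xi_0}\phi_{\xi_0}(x-y,\xi_0)\,dy,
\end{eqnarray*}
which use $i\xi_0 e^{iy\xi_0}=\partial_y e^{iy\xi_0}$ and integration by parts to cancel the dangerous factor of $x$. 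This cancellation is the crux of the symbol estimate and must be made explicit; the heuristic power counting alone does not establish it.

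So: same skeleton, but the three technical pivots — integration by parts onto $K$ using its defining PDE, the analytic-Fredholm/resolvent factorization for low frequencies, and the $(\partial_{\xi_0}-ix)$ commutation — are the places where the actual work lives, and your proposal either misstates them or leaves them open.
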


\begin{proof}
The solution to \eqref{eqn:dfbu} will be solved differently for large and small values of $\xi_0$.  In particular, we use a Fredholm theory approach for the small frequencies and an iterative approach for the large frequencies.  The analysis will be done using $K^+$ as the analysis for $K^-$ will follow similarly.  For simplicity, we set $K = K^+$. 

To begin, let us take $|\xi_0|>M$, where $M$ will be determined in the exposition.  Then, we solve Equation \eqref{eqn:dfbu} using Picard iteration.  For simplicity, let $g_{\xi_0} = v$.  Setting $v^0 = 0$ and $T u = \tilde{V} (K * u)$, we have
\begin{eqnarray*}
v^1 & = & K(x)*[F_{\xi_0} (x) e^{i x \cdot \xi_0}] \\
v^2 & = & K(x) * [( F_{\xi_0} (x) e^{i x \cdot \xi_0}) + (\tilde{V}(x,D) K(x)*(F_{\xi_0} (x) e^{i x \cdot \xi_0}))] \\
& = & K(x) * [( F_{\xi_0} (x) e^{i x \cdot \xi_0}) - (V_1+V_2)(\lambda^2 + |\xi_0|^2)K(x)*(F_{\xi_0} (x) e^{i x \cdot \xi_0}) \\
&+& (V_1+V_2)\tilde{K}(x)*(F_{\xi_0} (x) e^{i x \cdot \xi_0}) - (\nabla V_2 \cdot \nabla K(x)*(F_{\xi_0} (x) e^{i x \cdot \xi_0})) \\
&-& (V_1 (x) V_2(x) + \Delta V_2) K(x)*(F_{\xi_0} (x) e^{i x \cdot \xi_0})]\\
&\vdots& \\
v^n & = & K(x) * [F_{\xi_0} (x) e^{ix \cdot \xi_0} + \tilde{V} (x,D) v^{n-1} ] \\
& = & K(x) * [ \sum_{m=0}^{n-1} T^{m} F_{\xi_0} (x) e^{ix \cdot \xi_0}  ] \\
&\vdots& .
\end{eqnarray*}

We wish to show that this iteration converges in $L^4$.  
To see this, let $u \in L^4$.  Note that
\begin{eqnarray*}
\| K * \tilde{V} (x,D) u \|_{L^4} \lesssim \| K * V u \|_{L^4} + \| \nabla K * \bar{V} u \|_{L^4} + \| \Delta K * \bar{\bar{V}} u \|_{L^4},
\end{eqnarray*}
where $V$, $\bar{V}$, $\bar{\bar{V}} \in \mathcal{S}$.  Then,
\begin{eqnarray*}
\| K * \tilde{V} (x,D) u \|_{L^4} & \lesssim & \frac{1}{\xi_0^2 + \lambda^2} \frac{1}{|\xi_0|^{\frac{1}{2}}} \| V u \|_{L^\frac{4}{3}} + \frac{ |\xi_0| }{\xi_0^2 + \lambda^2} \| (|y|^{-1}) * \bar{V} u \|_{L^4} \\
& + & \frac{ \xi_0^2 }{\xi_0^2 + \lambda^2} \| K * \bar{\bar{V}} u \|_{L^4},
\end{eqnarray*}
so using the Hardy-Littlewood-Sobolev inequality and the bounds on $K$, we have
\begin{eqnarray*}
\| K * \tilde{V} (x,D) u \|_{L^4} & \lesssim & |\xi_0|^{-\frac{1}{2}} \| V u \|_{L^\frac{4}{3}} \\
& \lesssim & |\xi_0|^{-\frac{1}{2}} \| V \|_{L^2} \| u \|_{L^4},
\end{eqnarray*}
for some $V \in \mathcal{S}$.  As a result,
\begin{eqnarray*}
\| K * \tilde{V} (x,D) \|_{L^4 \to L^4} \leq C |\xi_0|^{-\frac{1}{2}},
\end{eqnarray*}
where $C$ is determined by $V_1$, $V_2$.  If $|\xi_0| > C^2$, then
\begin{eqnarray*}
\| K * \tilde{V} (x,D) \|_{L^4 \to L^4} \leq 1,
\end{eqnarray*}
and the existence of $g_\xi \in L^4$ for
\begin{eqnarray*}
(I - K * \tilde{V} (x,D)) g_\xi = g_\xi
\end{eqnarray*}
follows from a contraction argument.  In the notation from the theorem, we have $C^2 = M$.

Now, for the smaller frequencies, we apply Fredholm theory.  This approach also works for large $|\xi_0|$, however the iterative approach gives us uniform bounds for all $\xi_0$ such that $|\xi_0| > M $.  Once differentiability in $\xi_0$ has been obtained, we will then have uniform bounds for all $\xi_0$.  However, we must be careful near $\xi_0 = 0$ as $K$ has a particularly challenging dependence upon $|\xi_0|$.  We explore this shortly, but first let us finish the existence argument for low frequencies.  

To begin, Equation \eqref{eqn:dfbu} shows that
\begin{eqnarray}
\label{eqn:dfbg}
g_{\xi_0} = K*(\tilde{V} (x,\xi_0) e^{ix \cdot \xi_0}) + K*(\tilde{V}(x,D) g_{\xi_0}),
\end{eqnarray}
where 
\begin{eqnarray*}
\tilde{V} (x,D)  = (-\Delta + \lambda^2 - V_1) V_2 + V_1 (-\Delta + \lambda^2)
\end{eqnarray*}
is a second order operator.  

Now, if $K*( \tilde{V} (x,D) \cdot)$ is a compact operator, we may use Fredholm Theory (see \cite{LCE}, Appendix F) to say that either there is a unique solution to \eqref{eqn:dfbg} or there exists a nontrivial $u \in L^4$ such that
\begin{eqnarray*}
(I - K * \tilde{V}) u = 0.
\end{eqnarray*}
However, expanding the equation for $u$, we see this $u$ is an embedded resonance and hence an embedded eigenvalue from \cite{ES1} or \cite{Mspec}.  As our spectral assumptions preclude the existence of embedded eigenvalues, the solution to \eqref{eqn:dfbg} is unique.  

Let us now discuss the compactness.  The operator itself is of the form
\begin{eqnarray*}
K*(\tilde{V} v) & = & \int \pi^2 \frac{[ e^{i |x-y||\xi_0|} - e^{-|x-y|\sqrt{|\xi_0|^2+2\lambda^2}}]}{|x-y| (|\xi_0|^2 + \lambda^2)} \tilde{V} (y, D_y) v(y) dy \\
 & = & \int \pi^2 \frac{[ e^{i |x-y||\xi_0|} - e^{-|x-y|\sqrt{|\xi_0|^2+2\lambda^2}}]}{|x-y| (|\xi_0|^2 + \lambda^2)} \\
& \times & [(-\Delta_y + \lambda^2 - V_1(y)) V_2(y) + V_1(y) (-\Delta_y + \lambda^2)] v(y) dy.
\end{eqnarray*}

Hence, using integration by parts, we are concerned about the following two types of operators
\begin{eqnarray*}
(1) \ P_1 u & = & \int K(x-y) V(y) u(y) dy \\
(2) \ P_2 u & = & \int \tilde{K}(x-y) V(y) u(y) dy,
\end{eqnarray*}
where $V \in \mathcal{S}$.  Of course, technically there will be terms with derivatives falling on $K$ and $V$, however a brief calculation shows that these fall into the same class of operators as $P_2$.  Indeed, by construction
\begin{eqnarray*}
(-\Delta - | \xi_0|^2) \tilde{K} = 0
\end{eqnarray*}
and
\begin{eqnarray*}
(-\Delta - | \xi_0|^2) K = \frac{4 \pi^2}{|x|} [ e^{-|x|\sqrt{|\xi_0|^2+2 \lambda^2}}],
\end{eqnarray*}
hence when all derivatives fall on $K$, simply by looking at $-\Delta - | \xi_0|^2 + | \xi_0|^2$ we get reduction back to $P_1$ or $P_2$ as $K$ is a convolution kernel for an exact solution.  

We now need to prove 
$$P_i :L^4 \to L^4,$$  
for $i = 1,2$.

Assume that $u_j \to^w 0$ in $L^4$.  Since we are working in $\RR^3$, 
using duality and the properties of $V$, we have 
\begin{eqnarray*}
P_i u_j (x) \to 0 \ \text{as} \ j \to \infty
\end{eqnarray*}
for almost every $x$, where $i = 1,2$.  By the uniform boundedness of weakly convergent sequences, the Hardy-Littlewood-Sobolev Inequality, and H\"older we have,
\begin{eqnarray*}
\| P_i u_j \|_{L^4} & \leq & \| V \|_{L^\frac{3}{2}} \| u_j \|_{L^4} \\
& \leq & C,
\end{eqnarray*}
for $i = 1,2$.  Hence, there is a subsequence $j_k$ such that $\| P_i u_{j_k} \|_{L^4}$ converges.  Therefore, it must converge to $0$.  As a result, the operator $K*(\tilde{V} \cdot):L^4 \to L^4$ is compact and there exists a unique $g_{\xi_0}$ for all $\xi_0$.  Note that $\tilde{V} K$ is compact from $L^{\frac{4}{3}} \to L^{\frac{4}{3}}$ using similar arguments.  

To discuss the continuous dependence upon $\xi_0$, we need to study the functions $g_{\xi_0}$ in more detail.  In particular, we must have $\tilde{V} g_{\xi_0}$ smooth with repect to $\xi_0$ and $|\xi_0$.  From the expression for $g_{\xi_0}$, we know that
\begin{eqnarray*}
(I - K*(\tilde{V} (x,D) \cdot )) g_{\xi_0} & = & (I - P) g_{\xi_0} \\
& = & K * \tilde{V} (x,\xi_0) e^{i x \xi_0},
\end{eqnarray*}
so
\begin{eqnarray*}
g_{\xi_0} = (I - P)^{-1} (K * (\tilde{V} (x,\xi_0) e^{i \cdot \xi_0})),
\end{eqnarray*}
where
\begin{eqnarray*}
K (\xi) = [(-\Delta - \xi^2)(-\Delta + 2 \lambda^2 + \xi^2)]^{-1}.
\end{eqnarray*}
From Fredholm Theory and the spectral assumptions, $(I - P)^{-1}$ is a resolvent which is uniquely defined.  However, using the decay of $\tilde{V}$, we can write 
\begin{eqnarray*}
\tilde{V} = \tilde{V}_1 \tilde{V}_2,
\end{eqnarray*}
where $|e^{c |x|} \tilde{V}_1| \lesssim 1$, $|e^{c |x|} \tilde{V}_2 f| \lesssim \| f \|_{W^{2,\infty}}$ given $0<c<c_0$.  The constant $c_0$ is determined by the decay of $\tilde{V}$.  Hence, using a resolvent identity, we have
\begin{eqnarray*}
\tilde{V} g_{\xi_0} = \tilde{V}_1 (I - \tilde{V}_2 K \tilde{V}_1)^{-1} \tilde{V}_2 (K * (\tilde{V} (x,\xi_0) e^{i \cdot \xi_0})).
\end{eqnarray*}
Using the decay properties of $\tilde{V}_i$ for $i=1,2$ and the differentiability of $K$, for any $\xi_0$ we have $\tilde{V}_2 K \tilde{V}_1 (z) $ is well-defined for $z \in \mathbb{C}$ in a small neighborhood of $|\xi_0|$.  As a result, 
\begin{eqnarray*}
(I - \tilde{V}_2 K \tilde{V}_1)^{-1}
\end{eqnarray*}
is analytic with respect to $z$.  
Also, $K$ is analytic with respect to $|\xi|$ and $\xi$, $\tilde{V}_2 e^{i x \xi}$ is analytic with respect to $\xi$ and we see that $g_{\xi_0}$ depends smoothly on $|\xi|$ and $\xi$.  Using the resolvent identity
\begin{eqnarray*}
f_0 (x, \xi) = \tilde{V} e^{i x \cdot \xi} + \tilde{V} (1 - K \tilde{V})^{-1} K*(\tilde{V} e^{i x \cdot \xi}),
\end{eqnarray*}
the decay in $x$ for $f_0$ follows.  

%Also note we have observed for small frequencies that we have 

For $|\xi_0| \geq M$, let us return to the iteration scheme
\begin{eqnarray*}
g^0_{\xi_0} & = & K * [\tilde{V} (\cdot , \xi_0) e^{i (\cdot, \xi_0)}], \\
& \vdots & \\
g^n_{\xi_0} & = & K * [\tilde{V} (\cdot , \xi_0) e^{i (\cdot, \xi_0)} + \tilde{V} (\cdot , \xi_0) g^{n-1}_{\xi_0} ],
\end{eqnarray*}
for $n \geq 1$.  Assuming $g_\xi = e^{i x \cdot \xi_0} f_0 (x, \xi_0, |\xi_0|)$, we have
\begin{eqnarray*}
f_0 & = & \tilde{V} (x,\xi_0) + e^{-i x \xi_0} \tilde{V} K*( e^{i x \xi_0} f_0) ,
\end{eqnarray*}
where by the mapping properties of $K$, choosing $M$ large enough, this expression is valid in $L^{\frac{4}{3}}_x$ for all $|\xi_0 | \geq M$.  

We would like to better understand the regularity in $x$ and $\xi$.  
To begin, let 
$$u = K* [e^{i (\cdot, \xi_0)} \phi(\cdot, \xi_0)].$$
Then, we see
\begin{eqnarray*}
(\partial_x - i \xi_0) u (x) & = & (\partial_x - i \xi_0) ( K * [\phi (\cdot,\xi_0) e^{i (\cdot, \xi_0)}] ) (x) \\
& = & i \xi_0 \int K(y) e^{i (x-y) \xi_0} \phi (x-y,\xi_0) dy - i \xi_0 \int K(y) e^{i (x-y) \xi_0} \phi (x-y,\xi_0)dy \\
& + & \int K(y) e^{i (x-y) \xi_0} \phi_x (x-y,\xi_0) dy \\
& = & \int K(y) e^{i (x-y) \xi_0} \phi_x (x-y,\xi_0) dy.
\end{eqnarray*} 
From here, recognizing that $e^{-i x \xi_0}$ cancels from
\begin{eqnarray*}
e^{-i x \xi_0} \tilde{V} K*( e^{i x \xi_0} \cdot)
\end{eqnarray*}
and again using the mapping properties of $K$, we have 
\begin{eqnarray*}
\| \partial^\alpha_x f_0 \|_{L^{\frac43}_x} \leq C_\alpha,
\end{eqnarray*}
for all multi-indices $\alpha$.  Hence, $f_0 \in C^\infty_x \cap L^\infty_x$. 
Similarly, 
\begin{eqnarray*}
\| \langle x \rangle^{N} \partial^\alpha_x f_0 \|_{L^{\frac43}_x} \leq C_{N,\alpha},
\end{eqnarray*}
for any $N \geq 0$ using the decay in $x$ of the operator $\tilde{V}$.

For the regularity in $\xi$, note that taking once again $u = K* [e^{i (\cdot, \xi_0)} \phi(\cdot, \xi_0)]$, we have
\begin{eqnarray*}
(\partial_{\xi_0} - i x) u & = & (\partial_{\xi_0} - i x) ( K * [\phi (\cdot,\xi_0) e^{i (\cdot, \xi_0)}] ) (x) \\
& = & \frac{4 \pi^2}{(\xi_0^2 + \lambda^2)} \left( i \frac{\xi_0}{|\xi_0|} \right) \int e^{i |x-y| |\xi_0|} e^{i (y) \xi_0} \phi (y,\xi_0) dy \\
& + & i \frac{\xi_0}{\sqrt{\xi_0^2 + 2 \lambda^2}}  \int e^{- |x-y| \sqrt{\xi_0^2 + 2 \lambda^2}} e^{i (y) \xi_0} \phi (y,\xi_0) dy  \\
& - & i x \int K(y) e^{i (x-y) \xi_0} \phi (x-y) dy + i \int K(y) e^{i (y) \xi_0} y \phi (y,\xi_0) dy \\
& + & \int K(y) e^{i (x-y) \xi_0} \phi (x-y,\xi_0) dy \\
& = & \left( \frac{1}{\sqrt{\xi_0^2 + 2 \lambda^2}} - \frac{1}{|\xi_0|} \right) \int K(y) e^{i y \xi_0} y \phi (y,\xi_0) dy \\
& + & \int K(y) e^{i (x-y) \xi_0} \phi_{\xi_0} (x-y,\xi_0) dy,
\end{eqnarray*}
where we have used $i \xi_0 e^{i y \xi_0} = \partial_y e^{i y \xi_0}$ and integrated by parts.  As a result,
\begin{eqnarray*}
\| \partial^\beta_{\xi_0} f_0 \|_{L^{\frac43}} \leq |\xi_0|^{2-|\beta|} C_\beta,
\end{eqnarray*}
for any multi-index $\beta$, $|\beta|=0,1,2,\dots$.  Combining the above results, we have
\begin{eqnarray*}
|\partial^\alpha_\xi \partial^\beta_x f_0 (x,\xi) | \leq C_{\alpha, \beta} |\xi|^{2 - |\alpha|},
\end{eqnarray*}
or $f_0 \in S^2$, which gives \eqref{eqn:symb}.

For the spatial regularity result, we once again use that the distorted Fourier basis satisfies the equation
\begin{eqnarray*}
g_{\xi_0} = K*(Fe^{ix\cdot \xi_0}) + K*(\tilde{V} g_{\xi_0}).
\end{eqnarray*}
We have existence for $g_{\xi_0}$ in $L^4$, but we can take advantage of the structure of $K*P$ in order to show improved regularity.  Then,
\begin{eqnarray*}
\nabla g_{\xi_0} = (\nabla K)*(Fe^{i x \cdot \xi_0}) + (\nabla K)*(\tilde{V} g_{\xi_0}).
\end{eqnarray*}
Hence, we must explore the nature of $(\nabla K)*(\tilde{V})$.  
Upon differentiating, we see
\begin{eqnarray*}
(\nabla K) (x-y) = O( |x-y|^{-1}),
\end{eqnarray*}
which means by a similar approach to Section \ref{lin:dfb}, we get
\begin{eqnarray*}
\| \nabla g_{\xi_0} \|_{L^4} \leq C( \| F \|_{L^\frac{12}{11}} + \| V \|_{L^{\frac{3}{2}}} \| g_{\xi_0} \|_{L^4}).
\end{eqnarray*}
To see this, we first use the Hardy-Littlewood-Sobolev inequality (see \cite{Ste}) with $\gamma = 1$ so
\begin{eqnarray*}
\frac{1}{p} = \frac{2}{3} + \frac{1}{4} = \frac{11}{12},
\end{eqnarray*}
then H\"olders inequality such that
\begin{eqnarray*}
\| V g \|_{L^{\frac{12}{11}}} \leq \| V \|_{L^{\frac{3}{2}}} \| g \|_{L^4}.
\end{eqnarray*}
Then, we can iterate this for all derivatives and using Sobolev embeddings, get continuity of all derivatives and hence smoothness.

To prove existence for $\partial_{\xi_0} g_{\xi_0}$ in Sobolev spaces, we must show that $\partial_{\xi_0} g_{\xi_0}$ is defined and bounded in some space of functions.  In this direction, we look at
\begin{eqnarray*}
[(-\Delta + 2 \lambda^2 + \xi_0^2)(-\Delta-\xi_0^2)] g_{\xi_0} & = & F_{\xi_0} e^{i x \xi_0} + \tilde{V} g_{\xi_0}
\end{eqnarray*}
and
\begin{eqnarray*}
[(-\Delta + 2 \lambda^2 + (\xi_0+h_j)^2)(-\Delta-(\xi_0+h_j)^2)] g_{\xi_0+h_j} & = & F_{\xi_0+h_j} e^{i x (\xi_0+h_j)} + \tilde{V} g_{\xi_0+h_j},
\end{eqnarray*}
where $h_j = h e_j$ and $e_j$ is the unit vector in the $j$-th coordinate.
Hence, if we define
\begin{eqnarray*}
v_{h} = g_{\xi_0+h_j} - g_{\xi_0},
\end{eqnarray*}
then we must solve
\begin{eqnarray*}
L_{\xi_0} (v_h) & = & (F_{\xi_0+h_j} e^{i x \cdot (\xi_0+h_j)} - F_{\xi_0} e^{i x \xi_0}) + O(h) u_{\xi_0} + \tilde{V} (v_h) \\
& = & O(h) (\tilde{F}_{\xi_0} + F_{\xi_0} + K* \tilde{V} u_{\xi_0}) + \tilde{V} (v_h).
\end{eqnarray*}
We can write this as 
\begin{eqnarray*}
L_{\xi_0} [v_h - O(h) K*(K*( \tilde{V} g_{\xi_0}))]  = O(h) (G) + \tilde{V} [v_h - O(h) K*(K*( \tilde{V} g_{\xi_0}))],
\end{eqnarray*}
where we have
\begin{eqnarray*}
G =\tilde{F}_{\xi_0} + F_{\xi_0} - \tilde{V} K*(K*(\tilde{V} g_{\xi_0})).
\end{eqnarray*}
To see that $G \in L^4$, we need only see that
\begin{eqnarray*}
\|\tilde{V} K*(K*(\tilde{V} g_{\xi_0}))\|_{L^4} < \infty
\end{eqnarray*}
since the other terms are dealt with above in the spatial regularity analysis.  However,
we have
\begin{eqnarray*}
K*(K*\cdot):L^1 \to L^\infty,
\end{eqnarray*}
following analysis similar to the complex interpolation argument.  Also, by moving all of the derivatives onto $Pu$, we see this is smooth.  All we lack is nice decay, hence
\begin{eqnarray*}
\|\tilde{V} K*(K*(\tilde{V} g_{\xi_0}))\|_{L^4} < \| K*(K*(\tilde{V} g_{\xi_0})) \|_{L^\infty} \| V \|_{L^4},
\end{eqnarray*}
for $V \in \mathcal{S}$ as given in the description of $P$.
From the Fredholm Theory, we know 
\begin{eqnarray*}
\| \frac{v_h}{h} - O(1) K*(K*(\tilde{V} g_{\xi_0})) \|_{L^4} \leq C,
\end{eqnarray*}
for $C = C(\xi_0)$.  However, given $w \in C^\infty_0 \cup L^4$ a sufficiently decaying, smooth function, we have
\begin{eqnarray*}
\| w \frac{v_h}{h} \|_{L^4} & \leq & C (1 + \| w K*(K*(\tilde{V} g_{\xi_0})) \|_{L^4} ) \\
& \leq & C
\end{eqnarray*}
from Section \ref{lin:dfb}, where $C$ is independent of $h$.  In this case, we have 
\begin{eqnarray*}
K*(K*(\tilde{V} g_{\xi_0})) \in L^\infty
\end{eqnarray*}
using H\"older's inequality, so we can take $w = \langle x \rangle^{-1}$.
Thus, we can take the limit as $h \to 0$ to see that derivatives in $\xi_0$ are bounded in weighted $L^4$ spaces.  Iterating this process involves taking stonger weight functions at each step of the iteration.  As a result, since $\tilde{V}$ has exponentially decaying terms in $x$ and $\tilde{V} g_{\xi_0}$ is well-defined in $L^4$ from the spatial regularity, we have the desired regularity in $\xi_0$.  

Now that we have differentiability with respect to $\xi_0$,
\begin{eqnarray*}
\partial_{(\xi_0)_j} \left( [(-\Delta + 2 \lambda^2 + \xi_0^2)(-\Delta-\xi_0^2)] g_{\xi_0}  = Fe^{i x \xi_0} + \tilde{V} g_{\xi_0} \right) 
\end{eqnarray*}
which implies
\begin{eqnarray*}
L_{\xi_0} \partial_{(\xi_0)_j} g_{\xi_0}& = & \partial_{(\xi_0)_j} (Fe^{i x \xi_0}) + P  \partial_{(\xi_0)_j} g_{\xi_0} \\
& - & 2 (\xi_0)_j (-\Delta-\xi_0^2) g_{\xi_0} - (\xi_0) (-\Delta + 2 \lambda^2 + \xi_0^2) g_{\xi_0}.
\end{eqnarray*}

For higher derivatives in $\xi_0$, we iterate this procedure.

\end{proof}

\begin{rem}
Note that the above analysis can also be done in the case where instead of $L^4$ we use $L^2(\langle x \rangle^{-s})$ as in \cite{Ag}.  To see this, note that
\begin{eqnarray*}
\| \phi \|_{L^1} \lesssim \| \phi \|_{L^2 (\langle x \rangle^{s})},
\end{eqnarray*}
where $s > d$, and
\begin{eqnarray*}
\| \phi \|_{L^2 (\langle x \rangle^{-s})} \lesssim \| \phi \|_{L^\infty},
\end{eqnarray*}
where $s >d$.  Then, we can go to the Sobolev norms to apply Hardy-Littlewood-Sobolev and use H\"older's inequality in weighted spaces and the boundedness of $V_1$ and $V_2$ in weighted $L^2$ spaces to complete the argument.
\end{rem}

\begin{rem}
\label{rem:dfbasy}
As $x \to \infty$, note that since $V_1$, $V_2 \in \mathcal{S}$, using Equation \eqref{eqn:dfbeq}, we have 
\begin{eqnarray*}
u_{\xi_0} \to \frac{\pi^2}{|\xi_0|^2+\lambda^2} \left[\frac{e^{\pm i |x| |\xi_0|} - e^{-|x|\sqrt{|\xi_0|^2+2 \lambda^2}}}{|x|}\right],
\end{eqnarray*}
which explains the choice of spaces $L^{2,s}$ for $x > \frac{1}{2}$ in \cite{Ag}.
\end{rem}

\section{Representation of the solution}
\label{lin:rep}

We present here a slightly different approach to the distorted Fourier transform, though the motivation comes from \cite{Ho2}.  

\begin{thm}
\label{thm:rep}
For $V \in \mathcal{S}$, there exists a distorted Fourier basis $\tilde{\phi}_\xi$ and correspondingly a distorted Fourier transform $\mathcal{G}$ for the nonselfadjoint operator $\mathcal{H}$, where
\begin{eqnarray*}
\mathcal{G}_\pm f = \int \tilde{\phi}^\pm_{\xi} (x) f (x) dx.
\end{eqnarray*}
Similarly, there exists an inverse Fourier basis $\tilde{\phi}_\xi^{-1} (x) $ and correspondingly an inverse Fourier transform $\mathcal{G}^{-1}$ for the nonselfadjoint operator $\mathcal{H}$, where
\begin{eqnarray*}
\mathcal{G}_\pm^{-1} f = \int \{ \tilde{\phi}^\pm_\xi \}^{-1} (x) f (\xi) d\xi.
\end{eqnarray*}
It follows that
\begin{eqnarray*}
\| \mathcal{G}_\pm \|_{L^2 \to L^2} & \lesssim & 1, \\
\| \mathcal{G}_\pm^{-1} \|_{L^2 \to L^2} & \lesssim & 1.
\end{eqnarray*}
These operators are not unitary, however 
\begin{eqnarray*}
\| \mathcal{G}_\pm^{-1} \mathcal{G} \|_{L^2 \to L^2} \lesssim 1
\end{eqnarray*}
and 
\begin{eqnarray*}
\mathcal{G}_\pm^{-1} \mathcal{G}_\pm \phi = P_c \phi.
\end{eqnarray*}
\end{thm}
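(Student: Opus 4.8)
The plan is to build the vector‑valued distorted plane waves of $\mathcal{H}$ out of the scalar functions $u_\xi$ produced by Theorem~\ref{thm:dfbu}, to read off $\mathcal{G}_\pm$ and its synthesis counterpart $\mathcal{G}_\pm^{-1}$ from these two families, and then to transfer the mapping and completeness properties of the scalar (essentially Agmon) theory of Section~\ref{lin:gdfb} to the matrix setting. Writing $L_{-}=-\Delta+\lambda^2-V_1$, $L_{+}=-\Delta+\lambda^2-V_2$ and $\omega(\xi)=\lambda^2+|\xi|^2$, the first observation is that asking for $\mathcal{H}\tilde{\phi}_\xi=\pm i\,\omega(\xi)\tilde{\phi}_\xi$ forces the first component $a_\xi$ of $\tilde{\phi}_\xi$ to solve $(-\Delta+\lambda^2-V_1)(-\Delta+\lambda^2-V_2)a_\xi=\omega(\xi)^2 a_\xi$, which is exactly \eqref{eqn:dfbe}. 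So I would set $a_\xi=u_\xi=e^{ix\cdot\xi}+g^{\pm}_\xi$ from Theorem~\ref{thm:dfbu} and $b_\xi=\pm i\,\omega(\xi)^{-1}L_{+}a_\xi$. Using the representation $g^{\pm}_\xi=K^{\pm}*[f_0(\cdot,\xi,|\xi|)]$ and the symbol bounds \eqref{eqn:symb}, this produces $\tilde{\phi}^{\pm}_\xi(x)=(1,\pm i)^{T}e^{ix\cdot\xi}+r^{\pm}_\xi(x)$, with $r^{\pm}_\xi$ Schwartz in $x$ uniformly and of controlled symbol‑class type in $\xi$; since $(1,i)$ and $(1,-i)$ span $\mathbb{C}^2$, the pair $\{\tilde{\phi}^{+}_\xi,\tilde{\phi}^{-}_\xi\}$ is the candidate basis. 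The synthesis family $\{\tilde{\phi}^{\pm}_\xi\}^{-1}$ I would take to be the biorthogonal one, namely the analogously built distorted plane waves of the formal adjoint $\mathcal{H}^{*}$ (equivalently $\mathcal{H}$ with the opposite limiting‑absorption sign), normalized so that the pairing computed below is a delta.

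Next I would establish the $L^2$ bounds. For $\|\mathcal{G}_\pm\|_{L^2\to L^2}$ and $\|\mathcal{G}_\pm^{-1}\|_{L^2\to L^2}$ I would split each operator as the free matrix Fourier transform, which is unitary on $L^2$, plus the error $Tf(\xi)=\int r^{\pm}_\xi(x)f(x)\,dx$ coming from $r^{\pm}_\xi$. To bound $T$ I would run a $TT^{*}$ argument: $T^{*}T$ has kernel $\int\overline{r^{\pm}_\xi(x)}\,r^{\pm}_\xi(y)\,d\xi$, and I would control it using the rapid $x$‑decay of $f_0$ (Theorem~\ref{thm:dfbu}), the smoothing of the kernel $K^{\pm}$ (the $\omega(\xi)^{-1}$ prefactor together with the $|\xi|^{-1/2}$ gain of Lemma~\ref{lem:K}), and integration by parts in $x$ against $e^{i(x-y)\cdot\xi}$ for $|\xi|$ large, which renders the kernel integrable uniformly; the region $|\xi|<M$ would be handled separately via the Fredholm bounds from the proof of Theorem~\ref{thm:dfbu}. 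The composition bound $\|\mathcal{G}_\pm^{-1}\mathcal{G}_\pm\|_{L^2\to L^2}\lesssim 1$ is then automatic.

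Finally I would identify $\mathcal{G}_\pm^{-1}\mathcal{G}_\pm$ with $P_c$. Its Schwartz kernel is $\int\{\tilde{\phi}^{\pm}_\xi\}^{-1}(x)\,\tilde{\phi}^{\pm}_\xi(y)\,d\xi$, and I would show this equals $\delta(x-y)\,\mathrm{Id}-\Pi_d(x,y)$, where $\Pi_d$ is the Riesz projection onto the discrete spectrum of $\mathcal{H}$, by the usual completeness argument: express the limiting‑absorption resolvents of $\mathcal{H}$ through those of $L_{-}L_{+}$ and the free resolvent via the resolvent identity, let the spectral parameter sweep the continuous spectrum, and read off the continuous part of the spectral measure — the scalar incarnation of this being items (i)--(iii) of Section~\ref{lin:gdfb} applied to $L_{-}L_{+}$, with the biorthogonality $\langle\tilde{\phi}^{\pm}_\xi,\{\tilde{\phi}^{\pm}_{\xi'}\}^{-1}\rangle=c_d\,\delta(\xi-\xi')$ fixing the normalization. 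The admissibility hypotheses (no embedded eigenvalues, $\pm\lambda$ not resonances, $L^2=L^2_c\oplus L^2_d$ for $\mathcal{H}$, cf.~\cite{ES1},\cite{Mspec}) then ensure there is no singular continuous part and that $\tilde{\phi}^{\pm}_\xi$ annihilates $L^2_d$, so that $\mathcal{G}_\pm^{-1}\mathcal{G}_\pm$ is the identity on $L^2_c$ and zero on $L^2_d$, i.e.\ equals $P_c$.

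The step I expect to be the main obstacle is the $L^2$ boundedness: because the correction $f_0$ lies only in the growing symbol class $S^{2}$ in $\xi$, the $TT^{*}$ kernel must be estimated by delicately balancing the $\xi$‑growth against the decay supplied by $K^{\pm}$ and by the integration‑by‑parts/stationary‑phase gains, and this has to hold uniformly down into the low‑frequency regime $|\xi|\to 0$ where $K^{\pm}$ depends non‑smoothly on $|\xi|$; once that is secured, the construction of the basis and the completeness identification are essentially bookkeeping on top of the scalar theory already in place.
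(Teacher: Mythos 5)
Your route differs from the paper's in an essential way, and the difference lands exactly on the step you flagged as the hard one. You propose to (a) build the vector eigenfunctions directly from the scalar $u_\xi$ of Theorem~\ref{thm:dfbu} by taking $b_\xi=\pm i\,\omega(\xi)^{-1}L_+u_\xi$, (b) prove $\|\mathcal{G}_\pm\|_{L^2\to L^2}\lesssim 1$ by a $TT^*$ estimate on the error $r_\xi$, and (c) prove $\mathcal{G}_\pm^{-1}\mathcal{G}_\pm=P_c$ by a completeness/biorthogonality computation with the adjoint waves. The paper instead never estimates $\mathcal{G}_\pm$ on $L^2$ by hand at all. It conjugates $\mathcal{H}^2=\mathrm{diag}(L_-L_+,\,L_+L_-)$ to the self-adjoint $\tilde{\mathcal{H}}=\mathrm{diag}(L_-^{1/2}L_+L_-^{1/2},\,L_-^{1/2}L_+L_-^{1/2})$ via the matrix $\mathrm{diag}(L_-^{-1/2},L_-^{1/2})$, proves (Theorem~\ref{thm:rep1} plus the surrounding lemmas) that $L_-^{\pm 1/2}$ are classical PDOs of the right order and that $L_-^{1/2}V_2L_-^{1/2}$ remains short range, invokes the Agmon/Hörmander self-adjoint distorted Fourier transform for $\tilde{\mathcal{H}}$ (for which Plancherel on $L^2_c$ is automatic), and then writes $\mathcal{G}_\pm$ explicitly as a composition of that isometric transform with the bounded PDOs $P$, $T$ and their inverses (equations \eqref{eqn:uvrep1}--\eqref{eqn:uvrep2} and the matrix $Q=T\mathcal{F}P$). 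The $L^2$ bounds, the non-unitarity, the boundedness of $\mathcal{G}_\pm^{-1}\mathcal{G}_\pm$, and the identity $\mathcal{G}_\pm^{-1}\mathcal{G}_\pm=P_c$ all drop out of this factorization, because $P(T^{-1})^*$ and $TP^*$ are order-zero PDOs and the central piece is genuinely unitary on the continuous subspace.

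The gap in your version is therefore concrete: you do not carry out step (b), and for a non-self-adjoint family $\{\tilde\phi_\xi^\pm\}$ there is no a priori Plancherel theorem to lean on. The $TT^*$ kernel $\int \overline{r_\xi(x)}\,r_\xi(y)\,d\xi$ has $f_0\in S^2$ in $\xi$ against only a $\omega(\xi)^{-1}$ prefactor from $K^\pm$ and a $|x|^{-1}$-type kernel with no $\xi$-decay, so at large $|\xi|$ you are at best $O(1)$ pointwise; the oscillatory gain you invoke is exactly what the integration by parts inside the conjugation (i.e.\ the action of $L_-^{-1/2}$ as a PDO of order $-1$) is packaging for you, but asserting ``integration by parts makes it integrable'' does not yet close the estimate, especially uniformly down to $|\xi|\to 0$ where $|\xi|$ is nonsmooth. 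Similarly, step (c) requires a completeness statement for a non-self-adjoint pair of families; the paper gets this for free from the spectral theorem applied to $\tilde{\mathcal{H}}$ plus the splitting $L^2=\mathrm{Ker}(\mathcal{H})\oplus\mathrm{Ker}(\mathcal{H}^*)^\perp$, whereas in your scheme you would have to rebuild it from resolvent identities. Neither step is obviously impossible, but neither is carried out, and they are precisely the steps the paper's conjugation device is designed to avoid. If you want to keep your direct construction, the fix is to recognize that your $(a_\xi,b_\xi)$ is exactly the paper's $(u_\xi,v_\xi)$, and then still pass through $L_-^{\pm1/2}$ to reduce to the self-adjoint theory before asserting $L^2$ boundedness and completeness.
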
  

Before we prove the theorem, look at the operator 
\begin{eqnarray*}
\mathcal{H}^2 = \left[ \begin{array}{cc}
L_{-} L_{+} & 0 \\
0 & L_{+} L_{-}
\end{array} \right],
\end{eqnarray*}
for which we have the following self-adjoint realization
\begin{eqnarray*}
\tilde{\mathcal{H}} = \left[ \begin{array}{cc}
L_{-}^{\frac12} L_{+} L_{-}^{\frac12} & 0 \\
0 & L_{-}^{\frac12} L_{+} L_{-}^{\frac12}
\end{array} \right].
\end{eqnarray*}

Since
\begin{eqnarray*}
L_{-}^{\frac12} L_{+} L_{-}^{\frac12} & = & (-\Delta + \lambda^2 - V_1)^{\frac12} (-\Delta + \lambda^2 - V_1 - V_2) (-\Delta + \lambda^2 - V_1)^{\frac12} \\
& = & (-\Delta + \lambda^2 - V_1)^{2} - (-\Delta + \lambda^2 - V_1)^{\frac12} V_2 (-\Delta + \lambda^2 - V_1)^{\frac12} \\
& = & L_{-}^2 - L_{-}^{\frac{1}{2}} V_2 L_{-}^{\frac{1}{2}}.
\end{eqnarray*}
This is a fourth order constant coefficient operator with a lower order perturbation.  However, the perturbation is no longer a differential operator.  Ideally, by a similar analysis to that in \cite{Ag}, there exists a distorted Fourier basis, say $\tilde{u}_\xi$ such that
\begin{eqnarray*}
L_{-}^{\frac12} L_{+} L_{-}^{\frac12} \tilde{u}_\xi = (\lambda^2 + \xi^2)^2 \tilde{u}_\xi.
\end{eqnarray*}
To prove this, we need to show $L_{-}^{\frac12}$ is a pseudodifferential operator of strong enough class, which we explore in the sequel.  

From Theorem \ref{thm:dfbu}, we have $u_\xi = e^{ix \xi} + f_\xi (x)$, $v_\xi = e^{ix \xi} + g_\xi (x)$ such that
\begin{eqnarray*}
\mathcal{H}^2 \left[ \begin{array}{c}
u_\xi \\
v_\xi
\end{array} \right] = (\lambda^2 + \xi^2)^2 \left[ \begin{array}{c}
u_\xi \\
v_\xi
\end{array} \right],
\end{eqnarray*}
where $f_\xi (x)$, $g_\xi (x) \in L^4_x$, smooth in $x$ and $\xi$, and 
\begin{eqnarray*}
f_\xi, g_\xi \sim \frac{\pi^2}{|\xi_0|^2+\lambda^2} \left[\frac{e^{\pm i |x| |\xi_0|} - e^{-|x|\sqrt{|\xi_0|^2+2 \lambda^2}}}{|x|}\right]
\end{eqnarray*}
as $x \to \infty$.

Formally, we would like to say
\begin{eqnarray*}
\left[ \begin{array}{cc}
L_{-}^{\frac12} L_{+} L_{-}^{\frac12} & 0 \\
0 & L_{-}^{\frac12} L_{+} L_{-}^{\frac12}
\end{array} \right] \left[ \begin{array}{c}
L_{-}^{-\frac12} u_\xi \\
L_{-}^{\frac12} v_\xi
\end{array} \right] =  (\lambda^2 + \xi^2)^2 \left[ \begin{array}{c}
L_{-}^{-\frac12} u_\xi \\
L_{-}^{\frac12} v_\xi
\end{array} \right],
\end{eqnarray*}
however as $u_\xi$, $v_\xi \notin L^2$, we must investige further.

Before we begin, let us analyze the connection between $u_\xi$ and $v_\xi$.  For instance,
\begin{eqnarray*}
L_{+} (L_{-} L_{+} u_\xi) & = & L_{+} L_{-} (L_{+} u_\xi) \\
& = & L_{+} (\lambda^2 + \xi^2)^2 u_\xi, \\
L_{-} (L_{+} L_{-} v_\xi) & = & L_{-} L_{+} (L_{-} v_\xi) \\
& = & L_{-} (\lambda^2 + \xi^2)^2 v_\xi.
\end{eqnarray*}
Hence
\begin{eqnarray*}
L_{+} u_\xi = C v_\xi
\end{eqnarray*}
and 
\begin{eqnarray*}
L_{-} v_\xi = C u_\xi.
\end{eqnarray*}

In particular, we are interested in
\begin{eqnarray*}
L_{-} v_\xi & = & (-\Delta + \lambda^2 - V_1) (e^{i x \xi} + g_\xi) \\
& = & (\xi^2 + \lambda^2) e^{i x \xi} + L_{-} g_\xi - V_1 e^{ix \xi}, \\
C u_\xi & = & C (e^{ix \xi} + f_\xi).
\end{eqnarray*}
Then, $C = (\lambda^2 + \xi^2)$, so 
\begin{eqnarray*}
L_{-} v_\xi = (\lambda^2 + \xi^2) u_\xi, 
\end{eqnarray*}
\begin{eqnarray*}
L_{-}^{-1} u_\xi = (\lambda^2 + \xi^2)^{-1} v_\xi, 
\end{eqnarray*}
and 
\begin{eqnarray*}
f_\xi = \frac{1}{\lambda^2 + \xi^2} (L_{-} g_\xi - V_1 e^{ix \xi}).
\end{eqnarray*}
A similar calculation holds for $L_{+} u_\xi = C v_\xi$.

Note also that if we look at the vector
\begin{eqnarray*}
\vec{\phi}_\xi = \left[ \begin{array}{c}
i u_\xi \\
v_\xi
\end{array} \right],
\end{eqnarray*}
then we have
\begin{eqnarray*}
\mathcal{H} \vec{\phi}_\xi = (\lambda^2 + \xi^2) \vec{\phi}_\xi.
\end{eqnarray*}

To be more precise, we say that the operator $L_{-}^{\frac12} L_{+} L_{-}^{\frac12}$ has a distorted Fourier basis given by $\tilde{u}_\xi$, then find an expression for the distorted Fourier transform of $\mathcal{H} P_c$.  This distorted Fourier transform will be defined via a distorted Fourier basis that will give the relationship between $\tilde{u}_\xi$, $u_\xi$ and $v_\xi$.  The existence of $\tilde{u}_\xi$ must be proved since there is a lower order PDO perturbation instead of a differential operator.  See \cite{Ho2}.

In order to prove $L_{-}^{\pm \frac12}$ is a PDO, we must use a result similar to one from \cite{Ho4}, Chapter 29.  To this end, we refer to the following theorem given in \cite{Ho4}: 

\begin{thm}
Let $X$ be a compact manifold, $\Psi$ a space of pseudo-differential operators and $\Omega^{\frac{1}{2}}$ be the space of half-densities on $X$.  Let $P \in \Psi^m_{phg} (X; \Omega^{\frac{1}{2}}, \Omega^{\frac{1}{2}})$ be a positive, elliptic, symmetric operator.  Then, $P$ defines a positive, self-adjoint operator $\mathcal{P}$ in $L^2 (X,\Omega^{\frac{1}{2}}$.  If $m > 0$ and $a \in \R$, then $\mathcal{P}^a$ is also defined by a pseudodifferential operator in $\Psi^{am}_{phg} (X; \Omega^{\frac{1}{2}}, \Omega^{\frac{1}{2}})$, with principal and subprincipal symbols $p^a$ and $a p^{a-1} p^s$ if $p$ and $p^s$ are those for $P$.  
\end{thm}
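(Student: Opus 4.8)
The plan is to realise $\mathcal{P}^a$ through the holomorphic functional calculus and to extract its symbol from a \emph{parameter-dependent} parametrix of the resolvent $\mathcal{P}-\lambda$. Since $P$ is elliptic, symmetric and positive, its self-adjoint realisation $\mathcal{P}$ has spectrum contained in some $[c,\infty)$ with $c>0$, so $(\mathcal{P}-\lambda)^{-1}$ is a bounded operator on $L^2(X,\Omega^{1/2})$ for every $\lambda$ outside a narrow conic neighbourhood $W$ of $[c,\infty)$. First I would fix the boundary contour $\Gamma=\partial W$, oriented counterclockwise about the spectrum, choose the branch of $\lambda^a$ holomorphic off $(-\infty,0]$, and set
\begin{equation*}
\mathcal{P}^a \;=\; \frac{i}{2\pi}\int_\Gamma \lambda^a\,(\mathcal{P}-\lambda)^{-1}\,d\lambda
\end{equation*}
whenever $\Re a<0$ (the integral converges because $\|(\mathcal{P}-\lambda)^{-1}\|=O(|\lambda|^{-1})$ on $\Gamma$ and $|\lambda^a|\to 0$). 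For general $a$ one puts $\mathcal{P}^a=\mathcal{P}^k\mathcal{P}^{a-k}$ with $k\in\N$, $\Re(a-k)<0$, and checks the group law $\mathcal{P}^a\mathcal{P}^b=\mathcal{P}^{a+b}$; positivity and self-adjointness of $\mathcal{P}^a$ for real $a$ then follow from the spectral theorem, so the substantive claim is the pseudodifferential one.

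The analytic core is to build a parametrix for $\mathcal{P}-\lambda$ in the calculus of operators depending on the parameter $\lambda\in\Gamma$. Because $p_m(x,\xi)>0$, the full symbol $p-\lambda$ is elliptic with parameter in the sense of Agmon: $|p_m(x,\xi)-\lambda|\gtrsim|\xi|^m+|\lambda|$ for $\lambda\in\Gamma$. In a coordinate patch I would solve $(p-\lambda)\#\,b\sim 1$ recursively, obtaining $b(x,\xi,\lambda)\sim\sum_{j\ge0}b_{-m-j}(x,\xi,\lambda)$ with
\begin{equation*}
b_{-m}=(p_m-\lambda)^{-1},\qquad b_{-m-j}=-(p_m-\lambda)^{-1}\!\!\sum_{\substack{k+l+|\gamma|=j\\ l<j}}\frac{1}{\gamma!}\,\partial_\xi^\gamma p_{m-k}\,D_x^\gamma b_{-m-l},
\end{equation*}
each $b_{-m-j}$ a rational function quasi-homogeneous of degree $-m-j$ in $(\xi,\lambda^{1/m})$ and obeying $|\partial_\xi^\alpha\partial_x^\beta b_{-m-j}|\lesssim(|\xi|+|\lambda|^{1/m})^{-m-j-|\alpha|}$. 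Quantizing a truncation $B_N$ of this symbol, the error $R_N=(P-\lambda)B_N-I$ is smoothing with operator norm decaying like a negative power of $|\lambda|$ that improves without bound in $N$; this decay is precisely what makes the contour integrals below legitimate.

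Next I would substitute the parametrix into the Cauchy integral. Termwise, a rescaling $\lambda=|\xi|^m\mu$ shows
\begin{equation*}
\frac{i}{2\pi}\int_\Gamma\lambda^a\,b_{-m-j}(x,\xi,\lambda)\,d\lambda
\end{equation*}
is homogeneous of degree $am-j$ in $\xi$ for $|\xi|\ge1$, with convergence secured by $\Re a<0$ and the decay of $b_{-m-j}$; summing, these terms assemble into a classical symbol of order $am$, while the $R_N$-contribution is smoothing (push $N\to\infty$). The principal symbol comes from the leading term by a residue computation: closing $\Gamma$ around $p_m(x,\xi)$ gives $\frac{i}{2\pi}\int_\Gamma\lambda^a(p_m-\lambda)^{-1}\,d\lambda=p_m^a$, so the principal symbol of $\mathcal{P}^a$ is $p^a$. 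For the subprincipal symbol I would carry the expansion one step further, integrating $\lambda^a b_{-m-1}$ against $\Gamma$ and adding the universal half-density correction that converts the subleading full symbol to the subprincipal symbol; the residue bookkeeping yields $a\,p^{a-1}p^s$. Finally, globalise to the compact manifold $X$ by patching the local parametrices with a partition of unity subordinate to a coordinate cover, noting the transition errors are $\lambda$-smoothing, to conclude $\mathcal{P}^a\in\Psi^{am}_{phg}(X;\Omega^{1/2},\Omega^{1/2})$ with the claimed symbols.

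The step I expect to be the real obstacle is the parameter-dependent symbol calculus: proving the uniform-in-$\lambda$ estimates on the $b_{-m-j}$ and the matching decay of $R_N$ that turns $\frac{i}{2\pi}\int_\Gamma\lambda^a R_N(\lambda)\,d\lambda$ into a genuine smoothing operator, and --- more a matter of careful bookkeeping than of ideas --- tracking the half-density normalisation precisely enough that the second-order term lands on exactly $a\,p^{a-1}p^s$ rather than on that expression modified by extraneous lower-order contributions. The functional-calculus setup, the group law, and the globalisation are then routine.
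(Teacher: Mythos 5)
The statement in question is quoted verbatim from H\"ormander's book \cite{Ho4}; the paper does not prove it, but immediately afterward proves an $\R^d$ adaptation (Theorem~\ref{thm:rep1}) by exactly the same Seeley-style route you take: a Cauchy integral of $z^a$ against the resolvent, a parameter-dependent parametrix $E_z$ constructed by inverting $P-z$ modulo $S^{-1}$ and iterating the error to $S^{-\infty}$, termwise homogeneity analysis, and a residue computation for the principal symbol. Your sketch is correct and essentially coincides with the paper's approach.
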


We seek to prove a slightly different version here:

\begin{thm}
Let $P$ be a positive, symmetric, self-adjoint operator in $\Psi^{m,(2)}_{\rho, \delta} (\R^d)$.  Then, $P$ defines a postive, self-adjoint operator $\mathcal{P}$ in $L^2 (\R^d,\R^d)$.  If $m>0$ and $a \in \R$, then $\mathcal{P}^a$ is also defined by a pseudodifferential operator in $\Psi^{am,(2)}_{\rho, \delta} (\R^d,\R^d)$, with principal and subprincipal symbols $p^a$ and $a p^{a-1} p^s$ if $p$ and $p^s$ are those for $P$.
\end{thm}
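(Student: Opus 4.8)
The plan is to follow the classical Seeley construction of complex powers through the resolvent, adapted to the symbol class $\Psi^{m,(2)}_{\rho,\delta}(\R^d)$ at hand. First I would record the consequences of positivity and self-adjointness: $\mathcal{P}$ has spectrum in $[c,\infty)$ for some $c>0$ (if $\inf\spec\mathcal{P}=0$ one shifts by $\varepsilon>0$, proves the statement for $\mathcal{P}+\varepsilon$, and lets $\varepsilon\to0$ at the end). This allows a contour $\Gamma\subset\C\setminus[c,\infty)$ winding once around the spectrum, and — more importantly — it makes the full symbol of $\mathcal{P}-z$ \emph{elliptic with parameter}: for $z$ in a conic neighborhood of $\Gamma$ one has $|p_m(x,\xi)-z|\gtrsim\langle\xi\rangle^m+|z|$, with the analogous lower bound on the complete symbol, uniformly in $x\in\R^d$.

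Second, I would build a parametrix for $(\mathcal{P}-z)^{-1}$ inside a parameter-dependent version of the calculus. Writing the approximate inverse symbol as an asymptotic sum $q(x,\xi,z)\sim\sum_{j\ge0}q_{-m-j}(x,\xi,z)$ with $q_{-m}=(p_m-z)^{-1}$ and the lower-order terms determined recursively by the usual composition identities, one checks that each $q_{-m-j}$ lies in the correct class with the parameter $z$ counted with weight $m$, and that the truncation remainder is smoothing in the parameter-dependent sense, so that $(\mathcal{P}-z)^{-1}=\Op(q)+R(z)$ with $R(z)$ negligible and rapidly decaying in $|z|$. The two technical pillars here are that $\Psi^{m,(2)}_{\rho,\delta}$ — the class containing $L_-^2$ together with its lower-order PDO perturbation $L_-^{1/2}V_2L_-^{1/2}$ — is closed under the operations used in the recursion and under asymptotic summation, and that all estimates are uniform as $|x|\to\infty$ on $\R^d$, rather than relying on compactness as in the cited theorem from \cite{Ho4}.

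Third, for $\Re a<0$ I would define $\mathcal{P}^a=\frac{i}{2\pi}\int_\Gamma z^a(\mathcal{P}-z)^{-1}\,dz$, with $z^a$ cut along $[c,\infty)$; the integral converges in operator norm by the decay of the resolvent. Substituting the parametrix gives $\mathcal{P}^a=\Op\bigl(\frac{i}{2\pi}\int_\Gamma z^a q(x,\xi,z)\,dz\bigr)+\frac{i}{2\pi}\int_\Gamma z^a R(z)\,dz$, the second term again negligible, and term-by-term contour integration of the expansion of $q$ shows the symbol of $\mathcal{P}^a$ lies in $\Psi^{am,(2)}_{\rho,\delta}(\R^d)$. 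The leading integral $\frac{i}{2\pi}\int_\Gamma z^a(p_m-z)^{-1}\,dz=p_m^a$ gives the principal symbol $p^a=p_m^a$; the next term, using $\frac{i}{2\pi}\int_\Gamma z^a(p_m-z)^{-2}\,dz=a\,p_m^{a-1}$ and collecting the $\frac{1}{2i}\partial_x\partial_\xi$ corrections, produces the subprincipal symbol $a\,p_m^{a-1}p^s$. Finally, for general $a\in\R$ one writes $\mathcal{P}^a=\mathcal{P}^k\mathcal{P}^{a-k}$ with $k\in\N$ chosen so $\Re(a-k)<0$, using that integer powers are honest compositions inside the calculus and that the composition rule respects principal and subprincipal symbols.

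The main obstacle I expect is not the contour bookkeeping — that is the standard Seeley argument — but setting up the parameter-dependent calculus for $\Psi^{m,(2)}_{\rho,\delta}$ on all of $\R^d$ so that it is genuinely closed under the resolvent construction and under holomorphic functional calculus, with constants uniform in $x$; in particular one must verify that the negligible remainders stay negligible after multiplication by $z^a$ and integration along $\Gamma$, and that the branch cut is compatible with the ellipticity sector. A secondary subtlety, and the very reason the theorem is needed here, is checking that the non-differential perturbation of type $L_-^{1/2}V_2L_-^{1/2}$ does not spoil closure of the class; this must be tracked carefully through the recursion for the $q_{-m-j}$.
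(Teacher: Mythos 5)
Your proposal follows essentially the same Seeley contour-integral argument as the paper: for $\Re a < 0$ define $\mathcal{P}^a$ by integrating $z^a(\mathcal{P}-z)^{-1}$ along a contour separating $\spec\mathcal{P}$ from the branch cut, substitute a parametrix built by iterating the elliptic inverse of $p-z$, and propagate to general $a$ by composing with integer powers. The paper places the contour on the imaginary axis and appeals to Beals' theorem to control the smoothing remainder, while you wind around $[c,\infty)$ and control the remainder via parameter-dependent decay; these are interchangeable conventions, and if anything your version is more complete since the paper never actually verifies the claimed subprincipal symbol $a\,p^{a-1}p^s$ nor the uniformity of symbol estimates as $|x|\to\infty$, both of which you explicitly flag.
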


Note that since $R \in \mathcal{S}$, $F(R) \in \mathcal{S}$ by the properties of the nonlinearity.  Hence, we have the following:

\begin{lemma}
\label{thm1:lemsr}
The perturbation $V_1$ is short-range.
\end{lemma}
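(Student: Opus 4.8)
The plan is to observe that $V_1$ is a zeroth-order multiplication operator whose symbol is a Schwartz function of $x$, and that for such potentials the short-range condition recalled in Section~\ref{lin:gdfb} --- integrable (in fact rapid) decay of the coefficients together with $V_1(x_0,\xi)/P(x,\xi)\to 0$ as $\xi\to\infty$, hence compactness of $Z(u)=R_0^\pm(z)(V_1 u)$ on the relevant weighted spaces --- is automatic. Concretely, $V_1$ is the zeroth-order coefficient of $L_-=-\Delta+\lambda^2-V_1$, i.e.\ $V_1=\beta(R_\lambda^2)$ with $R_\lambda$ the positive, radial soliton solving \eqref{eqn:sol}; so everything reduces to showing $V_1\in\mathcal{S}(\reals^d)$.

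For this I would first recall why $R_\lambda\in\mathcal{S}$. Smoothness is elliptic bootstrapping in \eqref{eqn:sol}: the right-hand side $(\lambda^2-\beta(R_\lambda^2))R_\lambda$ is at least as regular as $R_\lambda$ (the choice of $p$ large in Definition~\ref{def:non1} guarantees $\beta$ is sufficiently differentiable, and $R_\lambda>0$, so on any compact set $\beta$ is being evaluated on a compact subset of $(0,\infty)$ where it is $C^\infty$), and Schauder/Sobolev regularity then promotes $R_\lambda$ to $C^\infty$. Exponential decay of $R_\lambda$ is the classical comparison argument: for $|x|$ large $\beta(R_\lambda^2)<\lambda^2/2$, so $-\Delta R_\lambda=(\beta(R_\lambda^2)-\lambda^2)R_\lambda\le-\frac{\lambda^2}{2}R_\lambda$, i.e.\ $R_\lambda$ is a positive subsolution of $-\Delta+\frac{\lambda^2}{2}$, whence $R_\lambda(x)\lesssim e^{-c|x|}$; differentiating the equation propagates the exponential decay to every $\partial_x^\alpha R_\lambda$, so $R_\lambda\in\mathcal{S}$. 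Since $\beta$ is $C^\infty$ on $(0,\infty)$ with $\beta(s)\sim s^{p/2}$ (type $1$) or $\beta(s)\sim s$ (type $2$) near $0$, the composition $V_1=\beta(R_\lambda^2)$ is smooth, and near infinity it inherits the exponential decay of $R_\lambda$ --- as do all its $x$-derivatives by the chain rule --- so $V_1\in\mathcal{S}(\reals^d)$, which is precisely the remark made just above the statement of the lemma that $F(R)\in\mathcal{S}$.

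With $V_1\in\mathcal{S}$ in hand, the verification of short-rangeness is routine: $\langle x\rangle^{N}V_1\in L^\infty$ for every $N$ gives the required decay of the (single, zeroth-order) coefficient, and $V_1(x_0,\xi)/P(x,\xi)=\beta(R_\lambda^2(x_0))/(|\xi|^2+\lambda^2)\to 0$ as $\xi\to\infty$ since $V_1$ has order $0$ while $P(D)=-\Delta+\lambda^2$ has order $2$. Then multiplication by $V_1$ maps $L^{2,-s}\to L^{2,s}$ boundedly for any $s$, and composing with the bounded free resolvent together with the Rellich-type compact embedding $\mathcal{H}_{2,s}\hookrightarrow L^{2,s}$ makes $Z(u)=R_0^\pm(z)(V_1 u)$ compact on $L^{2,s}$; the limiting absorption principle $\lim_{z\to\lambda,\ \pm\Im z>0}R(z)=R^\pm(\lambda)$ in $B(L^{2,s},\mathcal{H}_{2,-s})$ then follows exactly as in the Agmon theory recalled in Section~\ref{lin:gdfb}. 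I do not expect a genuine obstacle here; the only step deserving any care is the Schwartz property of $R_\lambda$ (and hence of $\beta(R_\lambda^2)$), which is the one place where the positivity, the radial profile, and the behavior of $\beta$ near $0$ and $\infty$ are actually used --- everything after that is boilerplate within the framework of Section~\ref{lin:gdfb}.
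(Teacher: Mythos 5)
Your argument is correct and follows exactly the same route the paper uses: the paper dispenses with the lemma by the single preceding remark that $R\in\mathcal{S}$ implies $F(R)\in\mathcal{S}$, and your proposal simply makes explicit the two ingredients that remark compresses --- the Schwartz regularity and exponential decay of $R_\lambda$ (and hence of $V_1=\beta(R_\lambda^2)$ and all its derivatives), and the standard Agmon observation that a Schwartz-class, zeroth-order potential satisfies both the integrable-decay and $V/P\to 0$ heuristics and yields compactness of $R_0^\pm V$ via Rellich embedding. Nothing is missing; you have just filled in the details the paper leaves implicit.
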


We need to prove that given the operator, 
\begin{eqnarray*}
L_{-} = -\Delta + \lambda^2 - V_1 \in S^{2},
\end{eqnarray*}
the new operator $L_{-}^a$ is a pseudodifferential operator for $a \in \reals$.

\begin{lemma}
For an operator $P$, the resolvent $R(z) = (P - z)^{-1}$ exists and is analytic for all $z$ except the eigenvalues of $P$.  Also, $\| R(z) \|_{L^2 \to L^2}$ is bounded by the inverse of the distance from $z$ to the nearest eigenvalue.
\end{lemma}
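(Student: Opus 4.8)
The statement is to be read for $P$ a (positive) self-adjoint operator on $L^2$, as in the preceding lemmas --- this hypothesis is what makes the clean norm identity true, and I would restore it explicitly at the outset. The plan is simply to apply the spectral theorem together with the resolvent identity; no properties of the particular operators $L_-^{a}$ are needed beyond self-adjointness.

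First I would recall that self-adjointness forces $\Spec(P) \subset \RR$, and that for $z \notin \Spec(P)$ the operator $P - z$ has a bounded two-sided inverse $R(z)$ on $L^2$; so the resolvent exists off the spectrum, and in particular off the eigenvalues (the two sets coinciding when $P$ has compact resolvent, which is the situation in the application). Writing the functional calculus $P = \int \mu \, dE(\mu)$ against the projection-valued spectral measure $E$, one has $R(z) = \int (\mu - z)^{-1}\, dE(\mu)$, and the norm formula of the functional calculus gives
\[
\| R(z) \|_{L^2 \to L^2} \;=\; \sup_{\mu \in \Spec(P)} \frac{1}{|\mu - z|} \;=\; \frac{1}{\operatorname{dist}(z, \Spec(P))},
\]
which is exactly the asserted bound once ``nearest eigenvalue'' is read as ``nearest spectral point''; for $z = a + ib$ with $b \neq 0$ this specializes to the familiar $\|R(z)\| \le |b|^{-1}$.

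For analyticity I would argue by a Neumann series. Fix $z_0 \notin \Spec(P)$ and set $\delta = \operatorname{dist}(z_0,\Spec(P)) = \|R(z_0)\|^{-1}$. For $|z - z_0| < \delta$ the operator $I - (z - z_0)R(z_0)$ is invertible by Neumann's series, and the resolvent identity yields
\[
R(z) \;=\; \bigl( I - (z - z_0) R(z_0) \bigr)^{-1} R(z_0) \;=\; \sum_{n=0}^{\infty} (z - z_0)^n\, R(z_0)^{n+1},
\]
a power series in $z$ converging in the operator norm on the disc $|z - z_0| < \delta$. Hence $z \mapsto R(z)$ is holomorphic on the resolvent set, which is therefore open; this simultaneously re-derives the existence of $R(z)$ at every $z \notin \Spec(P)$, closing the loop.

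The only genuinely delicate point --- and the one I would flag --- is the identification of the ``eigenvalues'' in the statement with the full spectrum: the equality $\|R(z)\| = 1/\operatorname{dist}(z,\Spec(P))$ really uses self-adjointness (through the functional calculus, so that the supremum of $|\mu - z|^{-1}$ runs over a subset of the real line), and it is only for operators with purely discrete spectrum that $\Spec(P)$ is literally a set of eigenvalues. Since the lemma is applied to such operators this is harmless, but I would state the hypothesis explicitly; everything else is the standard textbook argument and I would not belabor it.
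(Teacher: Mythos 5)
Your argument is correct and rests on exactly the standard spectral-theory facts (functional calculus for the norm identity, Neumann series for analyticity) that the paper invokes by a bare citation to Hislop--Sigal rather than proving. You also rightly flag the two implicit hypotheses needed for the statement to be literally true --- self-adjointness of $P$ and the reading of ``eigenvalues'' as ``spectrum'' --- which the paper leaves unstated.
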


\begin{proof}
This follows from basic facts from spectral theory as discussed in \cite{HS}.
\end{proof}

\begin{thm}
\label{thm:rep1}
The operator $L_{-}^a$ is pseudodifferential operator in the class $S^{2a}$ for $a \in \reals$.
%The operator $\tilde{L}^a$ is pseudodifferential operator in the class $S^{2a}$ for $a \in \reals$.  
\end{thm}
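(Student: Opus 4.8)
This is in essence the preceding $\reals^d$ version of H\"ormander's complex-powers theorem applied to $P = L_{-}$ with $m = 2$, $\rho = 1$, $\delta = 0$, so the plan is to verify the hypotheses and then indicate the resolvent construction behind them. First I would record the full symbol $p(x,\xi) = |\xi|^2 + \lambda^2 - V_1(x)$: since $V_1 = F(R) \in \mathcal{S}$ (as $R$ is Schwartz and $F$ inherits this from the nonlinearity, as noted above), $p$ together with all of its $x$- and $\xi$-derivatives is polynomially bounded of order $2$, so $L_{-} \in S^2$ in the Kohn--Nirenberg sense, and $p$ is elliptic since $p(x,\xi) \geq \frac12 |\xi|^2$ for $|\xi|$ large. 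Self-adjointness of $L_{-}$ on $H^2$ follows from Kato--Rellich ($-\Delta + \lambda^2$ self-adjoint, $V_1$ bounded and symmetric), and $L_{-}$ is nonnegative because the soliton equation \eqref{eqn:sol} is exactly $L_{-} R = 0$ with $R > 0$ nodeless, so $R$ is the ground state and $\spec(L_{-}) \subset [0,\infty)$, with continuous spectrum $[\lambda^2,\infty)$ and at most finitely many eigenvalues below. For $a < 0$ one works on the orthogonal complement of $\kerr L_{-}$ (more precisely on the range of the spectral projection onto $[\lambda^2,\infty)$ together with the positive eigenvalues), which is where $L_{-}^a$ is actually applied in the construction around Theorem~\ref{thm:rep}; the single point $0 \in \spec(L_{-})$ plays no role in the symbol calculus.

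For the mechanism I would represent the power by a Cauchy integral $L_{-}^a = \frac{i}{2\pi} \int_\Gamma z^a (L_{-} - z)^{-1}\, dz$, with $\Gamma$ a keyhole contour surrounding the relevant part of $\spec(L_{-})$ and avoiding the branch cut of $z^a$; that the integrand makes sense off $\spec(L_{-})$ is the content of the preceding resolvent lemma, $\|R(z)\|_{L^2 \to L^2} \le (\text{dist}(z,\spec L_{-}))^{-1}$. I would then build a parametrix for the resolvent with parameter in the usual way: a symbol $q(x,\xi,z) \sim \sum_{j \ge 0} q_j$ with $q_0 = (p(x,\xi)-z)^{-1}$ and the $q_j$ fixed recursively from the composition formula so that $(L_{-}-z)\Op(q) = I$ modulo an operator whose symbol decays rapidly in $\xi$. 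Because every coefficient appearing in this recursion is a polynomial in $x$-derivatives of $V_1$, hence Schwartz and in particular bounded (this is where short-rangeness of $V_1$, Lemma~\ref{thm1:lemsr}, is convenient), each $q_j$ is of order $-2-j$ in $\xi$ uniformly in $z \in \Gamma$, with enough decay in $z$ that $\int_\Gamma z^a q_j\, dz$ converges and defines a symbol in $S^{2a-j}$. Feeding this into the Cauchy integral, the leading term is $\frac{i}{2\pi}\int_\Gamma z^a (p-z)^{-1}\, dz = p(x,\xi)^a = (|\xi|^2 + \lambda^2 - V_1(x))^a$, cut off away from the bounded region where $p$ might vanish (the correction being of lower order), which is exactly a symbol of order $2a$.

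It remains to check that substituting the parametrix $\Op(q)$ for the true resolvent $R(z)$ under the integral costs only a smoothing operator, so that $L_{-}^a = \Op\!\big(\frac{i}{2\pi}\int_\Gamma z^a q\, dz\big)$ modulo $S^{-\infty}$, whence $L_{-}^a \in S^{2a}$. For this I would combine the a priori bound of the resolvent lemma with the parametrix identity $\Op(q)(L_{-}-z) = I + \Op(r_z)$, $r_z \in S^{-\infty}$ with controlled $z$-dependence along $\Gamma$, and iterate the resulting Neumann-type series until the $z$-tails are integrable. I expect the main obstacle to be exactly this last step: making the remainder estimates uniform in $z$ along the \emph{noncompact} contour $\Gamma$, so that a contour integral of a family of smoothing operators is again smoothing, together with the bookkeeping near $z = 0$ when $a < 0$ (handled by restricting to the complement of the finite-dimensional discrete spectral subspace). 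The symbol recursion and the verification of ellipticity are routine once that uniformity is established.
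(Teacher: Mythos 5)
Your argument follows essentially the same route as the paper's: represent $L_{-}^a$ by a Cauchy integral of $z^a$ against the resolvent, build a parametrix for $(L_{-}-z)^{-1}$ with leading symbol $(p(x,\xi)-z)^{-1}$ and iterate the error modulo $S^{-\infty}$, then integrate the parametrix along the contour to obtain a symbol of order $2a$ with principal part $p^a$. The "main obstacle" you flag at the end — controlling the remainder uniformly along the noncompact contour — is precisely what the paper dispatches by appealing to Beals' characterization theorem, so the only thing missing from your sketch is that one citation.
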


Before we prove the theorem, let us prove the following lemma from \cite{Ho3}.

\begin{lemma}
Let $a \in S^m$.  If 
\begin{eqnarray}
\label{eqn:par}
|a(x,\xi)| > c |\xi|^m
\end{eqnarray}
for $|\xi| > C$, then there exists $b \in S^{-m}$ such that
\begin{eqnarray*}
(i) \ a(x,\xi) b(x,\xi) - 1 \in S^{-1}, \\
(ii) \ a(x,D) b(x,D) - I \in Op S^{-\infty},
\end{eqnarray*}
and
\begin{eqnarray*}
(iii) \ b(x,D) a(x,D) - I \in Op S^{-\infty}.
\end{eqnarray*}
\end{lemma}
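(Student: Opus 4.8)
The plan is to run the classical elliptic parametrix construction, exactly as in \cite{Ho3}. Note first that since $|a(x,\xi)| > c|\xi|^m$ for $|\xi| > C$ and $|\xi|^m \asymp \langle\xi\rangle^m$ in that range, we have $|a(x,\xi)| \geq c'\langle\xi\rangle^m$ for $|\xi|\geq 2C$. Fix $\chi \in \Ci(\R^d)$ with $\chi(\xi) = 0$ for $|\xi|\leq 2C$ and $\chi(\xi)=1$ for $|\xi|\geq 3C$, and set $b_0(x,\xi) = \chi(\xi)/a(x,\xi)$, which is well defined and smooth. The first step is to check $b_0 \in S^{-m}$. Differentiating the identity $a\cdot(1/a)=1$ and inducting on $|\alpha|+|\beta|$, one sees that $\partial_\xi^\alpha\partial_x^\beta(1/a)$ on $\{\chi\neq 0\}$ is a finite sum of terms $(1/a)^{k+1}$ times a product of $k$ derivatives of $a$ whose orders sum to $|\alpha|+|\beta|$; using $|1/a|\lesssim\langle\xi\rangle^{-m}$ together with $a\in S^m$ this yields $|\partial_\xi^\alpha\partial_x^\beta(1/a)|\lesssim\langle\xi\rangle^{-m-|\alpha|}$ on $\supp\chi$. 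Since $\chi$ is constant outside a compact set, all its derivatives are compactly supported (hence in $S^{-\infty}$), so Leibniz gives $b_0\in S^{-m}$.

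Next I would invoke the composition calculus. For $a\in S^m$, $b_0\in S^{-m}$, the operator $a(x,D)b_0(x,D)$ has symbol $ab_0 + r_0$ with $r_0 \sim \sum_{|\gamma|\geq 1}\frac{1}{\gamma!}\partial_\xi^\gamma a\, D_x^\gamma b_0 \in S^{-1}$, while $ab_0 = \chi = 1 + (\chi-1)$ with $\chi-1\in S^{-\infty}$; hence $a(x,D)b_0(x,D) = I + R_0$, $R_0 \in \Op S^{-1}$. I would then iterate: given $a(x,D)B_{N-1}(x,D) = I + \Op(r_{N-1})$ with $r_{N-1}\in S^{-N}$, set $b_N = -b_0 r_{N-1} \in S^{-m-N}$, so that $a(x,D)b_N(x,D)$ has symbol $-\chi r_{N-1} + (\text{lower order}) = -r_{N-1} + S^{-N-1}$, and $B_N = B_{N-1} + \Op(b_N)$ satisfies $a(x,D)B_N(x,D) = I + R_N$, $R_N\in\Op S^{-N-1}$. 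This produces $b_j\in S^{-m-j}$ for all $j$, and by asymptotic completeness of the symbol scale (Borel's lemma) there is $b\in S^{-m}$ with $b - \sum_{j<N}b_j \in S^{-m-N}$ for every $N$. Then $a(x,D)b(x,D) - I \in \bigcap_N \Op S^{-N} = \Op S^{-\infty}$, which is (ii); and (i) is the $N=1$ piece, since $ab - 1 = a(b-b_0) + (\chi-1) \in S^{-1}$.

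For (iii), I would run the mirror construction with the composition in the other order to obtain $b'\in S^{-m}$ with $b'(x,D)a(x,D) - I \in \Op S^{-\infty}$; the usual associativity argument $b' \equiv b'(ab) \equiv (b'a)b \equiv b$ modulo $\Op S^{-\infty}$ then shows $b-b'\in\Op S^{-\infty}$, so $b$ is also a left parametrix. The main obstacle — indeed the only step that is not pure bookkeeping with the composition formula — is the Leibniz/Fa\`a di Bruno induction giving $1/a \in S^{-m}$ with the sharp gain $\langle\xi\rangle^{-m-|\alpha|}$ per $\xi$-derivative, together with a careful appeal to asymptotic summation; once these are in place, everything else follows from the standard pseudodifferential calculus, for which we refer to \cite{Ho3}. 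The same proof applies verbatim with $S^m$ replaced by the $(\rho,\delta)$- or matrix-valued classes used elsewhere in the paper, since only the structure of the composition calculus and the lower bound \eqref{eqn:par} enter.
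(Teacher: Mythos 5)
Your proof is correct, and at the level of overall architecture it is the same elliptic parametrix construction as the paper's: build a zeroth approximation $b_0$ with $ab_0-1$ of negative order, run a Neumann-series iteration via the composition calculus, asymptotically sum, and deduce two-sidedness by the usual $b' \equiv b'(ab') \equiv (b'a)b' \equiv b''$ argument. The one place you genuinely diverge is the base case. The paper first reduces to order zero by replacing $a$ with $a(x,\xi)(1+|\xi|^2)^{-m/2}$, then invokes a separate Claim (if $a_1,a_2\in S^0$ and $F\in C^\infty(\mathbb C^2)$ then $F(a_1,a_2)\in S^0$) and takes $b=F(a)$ with $F(z)=1/z$ for $|z|>c$; the symbol estimates for $b$ then come for free from the Claim, and the Fa\`a di Bruno bookkeeping is hidden inside its one-line chain-rule proof. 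You instead insert a frequency cutoff $\chi(\xi)$, set $b_0=\chi/a$, and verify $b_0\in S^{-m}$ directly by differentiating $1/a$, carrying the gain $\langle\xi\rangle^{-|\alpha|}$ per $\xi$-derivative through the Fa\`a di Bruno expansion. The paper's route is slightly slicker because the ``closure of $S^0$ under smooth nonlinear composition'' lemma is reusable and conceals the induction; your route is more self-contained and avoids the order-$m$ reduction, at the cost of a more explicit derivative count. Both are standard, both close the same gaps (ellipticity only for $|\xi|>C$, cutting off near the origin, two-sidedness), and both yield identical conclusions; nothing is missing in your write-up.
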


\begin{proof}[Proof of Lemma]

First, let us prove that \eqref{eqn:par} implies $(i)$.  We can reduce this to the case where $m=0$ by looking at $a(x,\xi) (1+|\xi|^2)^{-m/2}$ and $b(x,\xi) (1+|\xi|^2)^{m/2}$. 

\begin{claim}
If $a_1$, $a_2 \in S^0$ and $F \in C^\infty (\mathbb{C}^2)$, then $F(a_1,a_2) \in S^0$.  
\end{claim}

\begin{proof}
Since the $\Re a_j$, $\Im a_j \in S^0$ for $j = 1,2$, we may assume that $a_j$ is real and $F \in C^\infty (\reals^2)$.  Then,
\begin{eqnarray*}
\frac{\partial F(a)}{\partial x_j} & = & \sum_k \frac{\partial F}{\partial a_k} {\partial_{x_j} a_k}, \\ 
\frac{\partial F(a)}{\partial \xi_j} & = & \sum_k \frac{\partial F}{\partial a_k} {\partial_{\xi_j} a_k},
\end{eqnarray*}
where $\partial_{x_j} a_k \in S^0$, $\partial_{\xi_j} a_k \in S^{-1}$.  Hence, it is clear the derivatives of $F(a)$ decay as necessary for $F(a)$ to be in $S^0$.
\end{proof}

Hence, for $m = 0$, choose $F \in C^\infty$ so that $F(z) = \frac{1}{z}$ for $|z| > c$.  Set $b = F(a) \in S^0$ so $a(x,\xi) b(x,\xi) = 1$ for $|\xi| > C$.  This proves $(i)$.  

Using $(i)$, we have that 
\begin{eqnarray*}
a(x,D) b(x,D) = I - r(x,D), \ r \in S^{-1}.
\end{eqnarray*}
Set 
\begin{eqnarray*}
b(x,D) r(x,D)^k = b_k (x,D), \ b_k \in S^{-m-k},
\end{eqnarray*}
so we can iterate out the error.  Let $b'$ be the asymptotic sum of the $b_k$'s, so
\begin{eqnarray*}
a(x,D) b' (x,D) - I = a(x,D) (b' (x,D) - \sum_{j<k} b_j (x,D)) - r(x,D)^k \in Op S^{-k},
\end{eqnarray*}
for every $k$.  Then, we have $(ii)$ replacing $b$ with $b'$.  Similarly, we can find a $b''$ which satisfies $(iii)$.  Note also that
\begin{eqnarray*}
b' - b'' = b' (I - ab') + (b'' a - I) b',
\end{eqnarray*}
hence $b'$ and $b''$ are equivalent modulo $S^{-\infty}$.  
\end{proof}

\begin{proof}[Proof of Theorem \ref{thm:rep1}]

Since $L_{-}$ is self-adjoint, we have that $R(z)$ is defined and analytic for all $z$ except at the eigenvalues of $L_{-}$.  The $L^2$ norm of the resolvent can be estimated by the inverse of the distance to the set of eigenvalues.  Now, since $a < 0$, we have by the spectral theorem
\begin{eqnarray*}
\tilde{L}^a u = -(2 \pi i)^{-1} \int_{- i \infty}^{i \infty} z^a R(z) u dz,
\end{eqnarray*}
where the contour is slightly deformed near the origin to avoid $z=0$ and $z^a$ is analytic in the right half plane and equal to $1$ when $z = 1$.  Since $L_{-}^{a+1} = L_{-}^1 L_{-}^a$, the distribution kernel of $L_{-}^a$ is an entire analytic function of $a$.  

To understand the behavior of the singularities, we construct a parametrix.  Namely, since $|L_{-} (x,\xi)| > c |\xi|^2$ for $|\xi| > C$, we have the existence of an inverse modulo $S^{-1}$.  Then, we can iterate that error, to find an inverse modulo $S^{-\infty}$.  

In particular, we have $B_z$ such that
\begin{eqnarray*}
(P-z) B_z = I - Q_z,
\end{eqnarray*}
where $b_z = F(P(x,\xi)-z)$, $F(z) \sim 1/|z|$ for $z$ large and $Q_z \in Op (S^{-1})$.  Then, there is an $E_z$ given by the asymptotic sum  
\begin{eqnarray*}
\sum_{j=0}^\infty B_z (x,D) (Q_z (x,D))^j,
\end{eqnarray*}
such that 
\begin{eqnarray*}
(P - z) E_z = I - W_z,
\end{eqnarray*}
where $W_z \in Op (S^{-\infty})$.  So, we have
\begin{eqnarray*}
R(z) = E_z + R(z) W_z.
\end{eqnarray*}
Then, for $a < 0$, we have
\begin{eqnarray*}
\tilde{L}^a = -(2 \pi i)^{-1} \int_{-i \infty}^{i \infty} z^a E_z dz + T(a) u.
\end{eqnarray*}  
Here, $T(a)$ should be analytic in $a$ for $a < 1$.  In particular, this remainder will be a well-behaved pseudo-differential operator using Beals' Theorem as discussed in \cite{Beals}.  From the composition of pseudodifferential operators, we have that 
\begin{eqnarray*}
Q_z = \sum_{\alpha > 0} \partial_\xi^\alpha L_{-} (x,\xi) \partial_x^\alpha F( L_{-} - z ) / \alpha !.
\end{eqnarray*}
Hence, the terms of $E_z$ outside of compact set in phase space look like
\begin{eqnarray*}
(P-z)^{-k-1} q
\end{eqnarray*}
where $q \in S^{mk-\kappa}$ for some $\kappa \geq 0$.  

Hence, there is a pseudodifferential operator representation of $L_{-}^{-\frac{1}{2}}$ and thus $L_{-}^{\frac{1}{2}}$ by multiplication by the operator.  If $p$ is the principal symbol of $L_{-}$, the principal symbol of $L_{-}^a$ will be $F(p)$ where $F(z) = z^a$ for $|z|>C$.

\end{proof}

\begin{lemma}
The pseudodifferential operator $L_{-} V_1 + V_1 (-\Delta + \lambda^2) + L^{\frac{1}{2}}_{-} V_2 L^{\frac{1}{2}}_{-}$ is a short range perturbation.
\end{lemma}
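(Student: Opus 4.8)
The plan is to split the perturbation
\begin{eqnarray*}
W \defeq L_{-}V_1 + V_1(-\Delta+\lambda^2) + L_{-}^{\frac12}V_2 L_{-}^{\frac12} = W_1 + W_2
\end{eqnarray*}
into its honestly differential part $W_1 = L_{-}V_1 + V_1(-\Delta+\lambda^2)$ and its genuinely pseudodifferential part $W_2 = L_{-}^{\frac12}V_2 L_{-}^{\frac12}$, and to verify for each piece the two requirements for being a short range perturbation of the constant coefficient operator $(-\Delta+\lambda^2)^2$ (whose essential spectrum is $[\lambda^4,\infty)$): (a) the coefficients decay at least like an integrable function of $|x|$ — in fact Schwartz here — and (b) the symbol is of strictly lower order than $(|\xi|^2+\lambda^2)^2$, so that $W(x_0,\xi)/(|\xi|^2+\lambda^2)^2\to 0$ as $\xi\to\infty$ for each fixed $x_0$; these are exactly the heuristics recorded in Section~\ref{lin:gdfb}. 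Throughout I use that $V_1 = \beta(R^2)+2\beta'(R^2)R^2$ and $V_2$ are Schwartz since $R\in\mathcal S$, so that in particular $V_1$ is already short range by Lemma~\ref{thm1:lemsr}. For $W_1$ I would expand $L_{-}V_1 = (-\Delta+\lambda^2)V_1 - V_1^2$, so that $W_1 = (-\Delta+\lambda^2)V_1 + V_1(-\Delta+\lambda^2) - V_1^2$ is a second order differential operator whose coefficients are finite sums of products of Schwartz functions, hence Schwartz, with full symbol $O_{x_0}(|\xi|^2)$; thus (a) and (b) hold, $W_1$ is symmetric (since $(L_{-}V_1)^\ast = V_1 L_{-}$), and $u\mapsto R_0(z)(W_1 u)$ is compact on the relevant weighted spaces, so $W_1$ is short range.

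For $W_2$ I would first invoke Theorem~\ref{thm:rep1}: $L_{-}^{\pm\frac12}$ is a pseudodifferential operator with symbol in $S^{\pm1}$, so by the pseudodifferential composition calculus $W_2$ is symmetric (as $V_2$ is real and $L_{-}^{\frac12}$ self-adjoint) and of order $2$, with principal symbol $(|\xi|^2+\lambda^2)V_2(x)$; hence (b) holds since $2<4$. For the spatial decay (a) I would exploit the pseudolocality of the classical operators $L_{-}^{\pm\frac12}$ together with the fact that the Schwartz factor $V_2$ sits between the two copies of $L_{-}^{\frac12}$: commuting a weight through, $\langle x\rangle^N L_{-}^{\frac12} = L_{-}^{\frac12}\langle x\rangle^N + [\langle x\rangle^N, L_{-}^{\frac12}]$, where $[\langle x\rangle^N, L_{-}^{\frac12}]$ is again pseudodifferential, of one lower order in $\xi$ and with symbol growing only polynomially in $x$, and similarly on the right. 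Iterating this identity moves every power of $\langle x\rangle$ onto $V_2$, which absorbs it since $\langle x\rangle^M V_2\in\mathcal S$ for all $M$. Consequently $\langle x\rangle^N W_2\langle x\rangle^N$, modulo the harmless near-diagonal Calder\'on--Zygmund part, extends to a bounded operator on the Sobolev scale for every $N$, so $W_2$ decays faster than any integrable function of $|x-x_0|$ off the diagonal and $u\mapsto R_0(z)(W_2 u)$ is compact. Adding the two pieces, $W = W_1 + W_2$ is a symmetric short range perturbation, whence the limiting absorption principle $\lim_{z\to\mu,\ \pm\Im z>0}R(z)\in B(L^{2,s},\mathcal H_{2,-s})$, $s>\frac12$, holds for $\mu$ in the continuous spectrum of $\tilde{\mathcal H}=(-\Delta+\lambda^2)^2 - W = L_{-}^{\frac12}L_{+}L_{-}^{\frac12}$.

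The main obstacle is this last step for $W_2$: because $L_{-}^{\frac12}$ is merely pseudodifferential, one cannot simply read off Schwartz decay of "coefficients", and the real work is to show rigorously that conjugating by the weights $\langle x\rangle^N$ produces only genuinely lower order errors — this is exactly where the precise membership $L_{-}^{\frac12}\in\Op S^1$ from Theorem~\ref{thm:rep1}, and a Beals-type characterization of the calculus, are essential — and that the Schwartz function $V_2$ wedged between the two copies of $L_{-}^{\frac12}$ then soaks up all of these weights, delivering the relative compactness with respect to $(-\Delta+\lambda^2)^2$ that is the content of being short range.
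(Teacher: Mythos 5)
Your decomposition $W = W_1 + W_2$ and your identification of the two things to check — order $2 < 4$ and Schwartz decay in $x$ obtained by conjugating weights past $L_{-}^{\frac12}\in\Op S^1$ — are correct and do capture the essential content. However, the paper does not verify the abstract Agmon conditions (a) and (b) directly; instead it establishes three concrete, quantitative estimates \eqref{eqn:pdo1}--\eqref{eqn:pdo3} tailored to the subsequent Picard/Fredholm construction of the distorted basis $\tilde u_\xi$: (i) the oscillatory normal form $L_{-}^{\frac12}V_2 L_{-}^{\frac12}e^{ix\cdot\xi_0} = e^{ix\cdot\xi_0}V_{\xi_0}$ with $V_{\xi_0}\in\mathcal S$ and $\|V_{\xi_0}\|_{L^\infty}\sim|\xi_0|^2$, (ii) the iteration bound $\|K\ast T^n(e^{ix\cdot\xi_0}V_{\xi_0})\|_{L^4}=O(|\xi_0|^{-(n+1)/2})$ giving the large-frequency contraction, and (iii) compactness of $K\ast(L_{-}^{\frac12}V_2L_{-}^{\frac12}\,\cdot)$ on $W^{2,4}$. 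The machinery is also different: the paper invokes Stein's $L^p$ boundedness theorem for $\Op S^m$, introduces the decaying symbol class $S^m_r$, and obtains compactness by improving the range to $W^{2,4}(\langle\cdot\rangle^N)$ and using the compact embedding $W^{2,4}(\langle\cdot\rangle^N)\hookrightarrow W^{2,4}$, whereas you stay in the abstract weighted $L^2$ / $\mathcal H_{m,-s}$ framework with a $[\langle x\rangle^N,L_{-}^{\frac12}]$ commutator argument. Both routes hinge on exactly the same underlying facts — $L_{-}^{\frac12}\in\Op S^1$ from Theorem~\ref{thm:rep1}, $V_2\in\mathcal S$ absorbs all polynomial weights, and the decay yields relative compactness — and the paper's own weight-moving step \lq\lq the inherent integration by parts is justified as $V\in\mathcal S$\rq\rq\ is precisely your commutator identity in disguise. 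What your more textbook verification does not directly supply is the quantitative large-$|\xi_0|$ decay in \eqref{eqn:pdo2}, which the paper needs to run the contraction for high frequencies in the following lemma; and what the paper's approach sacrifices is a transparent check of the limiting absorption principle in the Agmon sense, which your version makes explicit.
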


\begin{proof}
This proof should be similar to that in Lemma \ref{thm1:lemsr}.  The argument for the differential operator $L_{-} V_1 + V_1 (-\Delta + \lambda^2)$ follows precisely as above.  Hence, we focus only on the compactness and iteration arguments for the pseudodifferential operator, $L^{\frac{1}{2}}_{-} V_2 L^{\frac{1}{2}}_{-}$.  In what follows, let $Tu = L_{-}^{\frac12} V L_{-}^{\frac12} K * (u)$.  In particular, we need to prove:
\begin{eqnarray}
\label{eqn:pdo1}
L_{-}^{\frac12} V L_{-}^{\frac12} e^{i x \cdot \xi_0} & = & e^{i x \cdot \xi_0} V_{\xi_0}, \ \text{for} \ V_{\xi_0} \in \mathcal{S}, \ \| V_{\xi_0} \|_L^\infty \sim |\xi_0|^2, \\
\label{eqn:pdo2}
\| K * T^n (e^{i x \cdot \xi_0} V_{\xi_0}) \|_{L^4} & = & O(|\xi_0|^{-\frac{n+1}{2}}), \\
\label{eqn:pdo3}
K* ( L_{-}^{\frac12} V L_{-}^{\frac12} \cdot) & : & W^{2,4} \hookrightarrow W^{2,4}.
\end{eqnarray}

For \eqref{eqn:pdo1}, we have in the sense of distributions that 
\begin{eqnarray*}
\mathcal{F} e^{ i x \xi_0} = \delta_{\xi_0} (\xi).
\end{eqnarray*}
Hence, since $V \in \mathcal{S}$,
\begin{eqnarray*}
L_{-}^{\frac12} V L_{-}^{\frac12} & = & \int P(x,\xi) e^{ i (x-x_1) \xi} V(x_1) \int P(x_1,\xi_1) e^{i x_1 \xi_1} \delta_{\xi_0} (\xi_1) d\xi_1 dx_1 d\xi \\
& = & \int P(x,\xi) e^{ i (x-x_1) \xi} V(x_1) \int P(x_1,\xi_0) e^{i x_1 \xi_0} dx_1 d\xi \\
& = & e^{i x \xi_0} \tilde{V} (x,\xi_0) + l.o.t.,
\end{eqnarray*}
where $\tilde{V} \in \mathcal{S}(x)$ and $| \tilde{V} | \lesssim \xi_0^2$ precisely as in Section \ref{lin:dfb}.  This comes in particular from realizing that the principal symbol of $L_{-}^{\frac12} V L_{-}^{\frac12}$ is
\begin{eqnarray*}
(\xi^2+\lambda^2-V_1(x)) V_2 (x).
\end{eqnarray*}

The results \eqref{eqn:pdo2} and \eqref{eqn:pdo3} follow from the following theorem proved in \cite{Ste}, Chapter VI.  

\begin{thm}[Stein]
Suppose $T_a$  is a pseudo-differential operator whose symbol $a$ belongs to $S^{m}$.  If $m$ is an integer and $k \geq m$, then $T_a$ is a bounded mapping from $W^{k,p} \to W^{k-m,p}$, whenever $1 < p < \infty$.  
\end{thm}

Since $L_{-}^{\frac12} \in S^1$ and $V \in S^0$, we have $L_{-}^{\frac12} V L_{-}^{\frac12} \in S^2$, hence
\begin{eqnarray*}
L_{-}^{\frac12} V L_{-}^{\frac12}: W^{2,4} \to L^4.
\end{eqnarray*}  
As $V \in \mathcal{S}$, we in fact have more than this.  Define the symbol class
\begin{eqnarray*}
S^m_r = \{ p | p \in S^m, \ |x^\alpha \partial_x^\beta \partial_\xi^\gamma p(x,\xi)| \leq C_{\alpha, \beta} |\xi|^{m-\gamma} \}.
\end{eqnarray*}
In other words, we have the standard symbol class $S^m$, where the symbol has rapid decay in $x$.  Here, $V \in S^0_r$.  Note that due to the properties of Schwarz class functions, we have for $p \in S^{m_1}$ and $q \in S^{m_2}$,
\begin{eqnarray*}
p q, qp \in S^{m_1 + m_2}_r
\end{eqnarray*}
and
\begin{eqnarray*}
q u : W^{m_2,p} \to L^q,
\end{eqnarray*}
where $1 < p,q < \infty$.

For \eqref{eqn:pdo2}, from the analysis in Theorem \ref{thm:dfbu} we have
\begin{eqnarray*}
(K * \cdot) : L^{\frac{4}{3}} \to W^{2,4}.
\end{eqnarray*}
We have from \eqref{eqn:pdo1}
\begin{eqnarray*}
\| \int K(x-y) e^{iy \cdot \xi_0} V_{\xi_0} (y) dy \|_{L^4} \lesssim |\xi_0|^{-\frac{1}{2}}.
\end{eqnarray*}
Then,
\begin{eqnarray*}
\| \int K(x-y) L_{-}^{\frac12} V(y) L_{-}^{\frac12} \int K(y-z) e^{iz \cdot \xi_0} V_{\xi_0} (z) dz dy \|_{L^4} \\
= |\xi_0|^{-\frac{1}{2}} \| L_{-}^{\frac12} V(y) L_{-}^{\frac12} \int K(y-z) e^{iz \cdot \xi_0} V_{\xi_0} (z) dz dy \|_{L^{\frac{4}{3}}} \\
\lesssim |\xi_0|^{-\frac{1}{2}} \| \int K(y-z) e^{iz \cdot \xi_0} V_{\xi_0} (z) dz dy \|_{W^{2,4}} \\
\lesssim  |\xi_0|^{-1} \| e^{iz \cdot \xi_0} V_{\xi_0} (z) \|_{L^{\frac{4}{3}}},
\end{eqnarray*}
using the fact that $V \in S^0_r$ and the mapping properties of $K$ described in Theorem \ref{thm:dfbu}.  Iterating this procedure, we get the result.

For \eqref{eqn:pdo3}, if $u \in W^{2,4}$, 
\begin{eqnarray*}
\| L_{-}^{\frac12} V L_{-}^{\frac12} u \|_{L^{\frac{4}{3}}} \lesssim \| u \|_{W^{2,4}}.
\end{eqnarray*}
By decay properties of $V$, we have
\begin{eqnarray*}
\| x L_{-}^{\frac12} V L_{-}^{\frac12} u \|_{L^{\frac{4}{3}_x}} \lesssim \| u \|_{W^{2,4}} + \| u \|_{W^{1,4}} \lesssim \| u \|_{W^{2,4}}.
\end{eqnarray*}
The inherent integration by parts is justified as $V \in \mathcal{S}$.
Hence,  by iterating this procedure and using properties of convolutions,
\begin{eqnarray*}
[K * (L_{-}^{\frac12} V L_{-}^{\frac12} \cdot)] : W^{2,4} \to W^{2,4} (\langle \cdot \rangle^{N}),
\end{eqnarray*}
for any $N \in \mathbb{N}$.  However, $W^{2,4} (\langle \cdot \rangle^{N})$ is compactly embedded in $W^{2,4}$, so \eqref{eqn:pdo3} holds.

%\begin{eqnarray*}
%\left| \int K(x-y) L_{-}^{\frac12} V(y) L_{-}^{\frac12} u(y) \right| & \lesssim &
%\left| \int L_{-}^{\frac12} K(x-y)  V(y) L_{-}^{\frac12} u(y) \right| \\
%& \lesssim & \|  V(y) L_{-}^{\frac12} K(x-y)  \|_{L^{\frac{4}{3}}_y} \| L_{-}^{\frac12} u(y) \|_{L^4_y} \\
%& \lesssim & \| u \|_{W^{1,4}} . 
%\end{eqnarray*}
%For $u_j \to^w 0$ in $W^{1,4}$, we have
%\begin{eqnarray*}
%K*( L_{-}^{\frac12} V(y) L_{-}^{\frac12} u_j) \to_{j \to \infty} 0.
%\end{eqnarray*}
%Then, we are able proceed similarly to \ref{thm:dfbu} and \eqref{eqn:pdo2} to prove compactness in $W^{1,4}$.  
\end{proof}

\begin{lemma}
There exists a distorted Fourier basis, $\tilde{u}_\xi$, for $L^{\frac{1}{2}}_{-} L_{+} L^{\frac{1}{2}}_{-}$ with the aforementioned smoothness properties.
\end{lemma}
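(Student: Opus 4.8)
The plan is to repeat the construction of \thmref{thm:dfbu} essentially verbatim, with the short-range differential perturbation $\tilde V$ there replaced by the short-range \emph{pseudodifferential} perturbation
\begin{eqnarray*}
\tilde W(x,D) = L_{-}V_1 + V_1(-\Delta+\lambda^2) + L_{-}^{\frac12}V_2 L_{-}^{\frac12},
\end{eqnarray*}
whose short-range character, together with the Sobolev mapping bounds \eqref{eqn:pdo1}--\eqref{eqn:pdo3}, was just established. I seek $\tilde u_\xi = e^{ix\cdot\xi} + \tilde g_\xi$ solving $L_{-}^{\frac12}L_{+}L_{-}^{\frac12}\tilde u_\xi = (\lambda^2+|\xi|^2)^2\tilde u_\xi$. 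Since $L_{-}^{\frac12}L_{+}L_{-}^{\frac12} = (-\Delta+\lambda^2)^2 - \tilde W(x,D)$ up to the bookkeeping already carried out, and since \eqref{eqn:pdo1} gives $\tilde W(x,D)e^{ix\cdot\xi_0} = e^{ix\cdot\xi_0}\tilde F_{\xi_0}(x)$ with $\tilde F_{\xi_0}\in\mathcal S$, $\|\tilde F_{\xi_0}\|_{L^\infty}\sim|\xi_0|^2$, taking the Fourier transform reduces the problem to the integral equation
\begin{eqnarray*}
\tilde g_{\xi_0} = K^{\pm}_{\xi_0} * (\tilde F_{\xi_0} e^{ix\cdot\xi_0}) + K^{\pm}_{\xi_0} * (\tilde W(x,D)\tilde g_{\xi_0}),
\end{eqnarray*}
with $K^{\pm}_{\xi_0}$ the convolution kernel \eqref{eqn:K} inverting $(-\Delta+\lambda^2)^2 - (\lambda^2+|\xi_0|^2)^2$ used in \secref{lin:dfb}; as before I work with $K = K^+$. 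This is \eqref{eqn:dfbu} with $\tilde W$ in place of $\tilde V$.

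For $|\xi_0| > M$ I would run the Picard iteration $\tilde g^{n}_{\xi_0} = K*[\tilde F_{\xi_0}e^{i(\cdot,\xi_0)} + \tilde W(x,D)\tilde g^{n-1}_{\xi_0}]$, $\tilde g^0_{\xi_0}=0$. Combining \lemref{lem:K} with \eqref{eqn:pdo2}--\eqref{eqn:pdo3} gives $\|K*(\tilde W(x,D)\cdot)\|_{W^{2,4}\to W^{2,4}}\leq C|\xi_0|^{-\frac12}$ with $C$ depending only on $V_1,V_2$, so for $|\xi_0| > M\defeq C^2$ the iteration is a contraction and converges in $W^{2,4}\subset L^4$. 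Writing $\tilde g_{\xi_0} = e^{ix\cdot\xi_0}f(x,\xi_0)$, the regularity claims follow, exactly as in \thmref{thm:dfbu}, from the commutator identities $(\partial_x - i\xi_0)(K*[e^{i(\cdot,\xi_0)}\phi]) = K*[e^{i(\cdot,\xi_0)}\partial_x\phi]$ and the analogous one for $\partial_{\xi_0} - ix$, which produces an extra term carrying one power of $|\xi_0|$ from differentiating $K$ in $|\xi_0|$; since $V_2\in\mathcal S$ gives $L_{-}^{\frac12}V_2 L_{-}^{\frac12}\in S^2_r$, the rapid spatial decay of $\tilde W$ then yields $|\langle x\rangle^N\partial^\alpha_{\xi_0}\partial^\beta_x f|\lesssim|\xi_0|^{2-|\alpha|}$, i.e.\ $f\in S^2$.

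For $|\xi_0|\leq M$ I would apply Fredholm theory. By \eqref{eqn:pdo3} the operator $K*(\tilde W(x,D)\cdot)$ maps $W^{2,4}$ into $W^{2,4}(\langle\cdot\rangle^N)$, which embeds compactly in $W^{2,4}$, and the differential part is compact as in \thmref{thm:dfbu}; hence either the integral equation has a unique $L^4$ solution, or there is a nonzero $u\in L^4$ with $(I - K*\tilde W)u = 0$. Expanding, such a $u$ would be an $L^4$ solution of $L_{-}^{\frac12}L_{+}L_{-}^{\frac12}u = (\lambda^2+|\xi_0|^2)^2 u$ at an energy in the essential spectrum $[\lambda^4,\infty)$, i.e.\ an embedded eigenfunction of $\mathcal H^2$ and hence of the admissible $\mathcal H$, which is excluded (as in \thmref{thm:dfbu}, appealing to \cite{ES1} or \cite{Mspec}); so the solution is unique. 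The smooth dependence on $\xi_0$ I would obtain as in \thmref{thm:dfbu}, by the weighted-$L^4$ difference-quotient argument: split $\tilde W = \tilde W_1\tilde W_2$ using the decay of its coefficients — exponential, in fact, since $V_1,V_2$ inherit the exponential decay of the soliton — so that the sandwiched resolvent $(I - \tilde W_2 K\tilde W_1)^{-1}$ is analytic in a complex neighbourhood of $|\xi_0|$, then iterate with successively stronger polynomial weights to get all the $\langle x\rangle^N$-weighted derivative bounds. Finally, uniqueness forces $\tilde u_\xi$ to be the $|\xi|$-dependent scalar multiple of $L_{-}^{-\frac12}u_\xi$ (equivalently of $L_{-}^{\frac12}v_\xi$) with leading term $e^{ix\cdot\xi}$, which is the identification underlying \thmref{thm:rep}.

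The only point genuinely new over \thmref{thm:dfbu} is that $\tilde W$ now contains the pseudodifferential piece $L_{-}^{\frac12}V_2 L_{-}^{\frac12}$ rather than only differential operators; but the mapping properties this requires — $K*\colon L^{4/3}\to W^{2,4}$, the $O(|\xi_0|^{-\frac12})$ gain per application, boundedness of $L_{-}^{\frac12}V_2 L_{-}^{\frac12}$ on weighted Sobolev spaces via the $S^2_r$ calculus, and the compactness — are precisely the content of the preceding lemma (through Stein's theorem and \eqref{eqn:pdo1}--\eqref{eqn:pdo3}), so granting those the argument is a line-by-line repetition of \thmref{thm:dfbu}. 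I expect the main technical nuisance to be the one-dimensional kernel of $L_{-}$ (spanned by the soliton, since $L_{-}R = 0$): $L_{-}^{\pm\frac12}$ and its parametrix are defined only modulo that rank-one projection, and one must check, as above, that this never interferes because the relevant generalized energies $(\lambda^2+|\xi_0|^2)^2\geq\lambda^4$ stay bounded away from $0$.
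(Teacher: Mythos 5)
Your proposal is correct and follows precisely the route the paper sketches: the paper's own proof is essentially a one-line instruction to repeat the argument of Theorem~\ref{thm:dfbu} with the differential short-range perturbation replaced by the pseudodifferential one, invoking \eqref{eqn:pdo1}--\eqref{eqn:pdo3} where the PDO nature of $L_{-}^{\frac12}V_2L_{-}^{\frac12}$ matters, and then citing the standard Agmon-type machinery to extract the Plancherel identity and spectral diagonalization. You have filled that instruction in faithfully --- contraction in $W^{2,4}$ for $|\xi_0|>M$ via the $O(|\xi_0|^{-1/2})$ gain from Lemma~\ref{lem:K} and \eqref{eqn:pdo2}--\eqref{eqn:pdo3}, Fredholm alternative for $|\xi_0|\leq M$ using compactness of $K*(\tilde W\cdot)$ from \eqref{eqn:pdo3} and the admissibility assumption to exclude embedded eigenvalues, and the commutator/weighted-resolvent arguments for the $S^2$-symbol bounds --- so this is the same proof, merely spelled out. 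One remark worth keeping: your flagged concern about $\ker L_{-}=\mathrm{span}\{R\}$ is a genuine subtlety that the paper's proof (and its earlier construction of $L_{-}^{\pm\frac12}$ via the contour integral in Theorem~\ref{thm:rep1}) quietly elides; it does not derail the construction of the continuum basis because the relevant energies $(\lambda^2+|\xi_0|^2)^2\geq\lambda^4$ stay away from $0$ and the parametrix of $L_{-}^{-\frac12}$ differs from the spectral inverse on $\ker(L_-)^\perp$ only by a finite-rank plus smoothing correction, but it is a point the paper should have made explicit and you are right to call it out.
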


\begin{proof}
Apply the techniques from the proof of Theorem \ref{thm:dfbu}, applying \eqref{eqn:pdo1}, \eqref{eqn:pdo2}, and \eqref{eqn:pdo3} when necessary.  Once the compactness is established, the standard self-adjoint techniques are available to give
\begin{eqnarray*}
\| P_c \phi \|_{L^2}^2 & = & (2 \pi)^{-d} \int | \mathcal{F}_\pm \phi |^2 dx, \\
\mathcal{F}_\pm^{-1} P_0 \mathcal{F}_\pm \phi & = & P_c \phi,
\end{eqnarray*}
where $\mathcal{F}_\pm$ is the distorted Fourier transform associated to $\tilde{u}^\pm_{\xi_0}$ and $P_0 (\xi_0) = (\xi_0^2 + \lambda)^2$ is the symbol for the leading order constant coefficient operator.

\end{proof}

Since
\begin{eqnarray*}
\left[ \begin{array}{cc}
L_{-}^{\frac12} L_{+} L_{-}^{\frac12} & 0 \\
0 & L_{-}^{\frac12} L_{+} L_{-}^{\frac12}
\end{array} \right] = \left[ \begin{array}{cc}
L_{-}^{-\frac12} & 0 \\
0 &  L_{-}^{\frac12}
\end{array} \right] \left[ \begin{array}{cc}
L_{-} L_{+} & 0 \\
0 & L_{+} L_{-}
\end{array} \right] \left[ \begin{array}{cc}
L_{-}^{\frac12} & 0 \\
0 &  L_{-}^{-\frac12}
\end{array} \right],
\end{eqnarray*}
we have
\begin{eqnarray*}
\left[ \begin{array}{cc}
L_{-}^{-\frac12} & 0 \\
0 &  L_{-}^{\frac12}
\end{array} \right] \mathcal{H}^2 \left[ \begin{array}{cc}
L_{-}^{\frac12} & 0 \\
0 &  L_{-}^{-\frac12}
\end{array} \right] P_{c} f = \tilde{\mathcal{F}}^*_{\pm} |(\xi^2+\lambda^2)^2| \tilde{\mathcal{F}}_{\pm} f,
\end{eqnarray*}
where $\tilde{\mathcal{F}}_{\pm}$ is the distorted Fourier transform with respect to $\tilde{u}_\xi$.  Setting
\begin{eqnarray*}
f = \left[ \begin{array}{cc}
L_{-}^{-\frac12} & 0 \\
0 &  L_{-}^{\frac12}
\end{array} \right] \tilde{f},
\end{eqnarray*}
we see
\begin{eqnarray*}
\left[ \begin{array}{cc}
L_{-}^{-\frac12} & 0 \\
0 &  L_{-}^{\frac12}
\end{array} \right] \mathcal{H}^2 \left[ \begin{array}{cc}
L_{-}^{\frac12} & 0 \\
0 &  L_{-}^{-\frac12}
\end{array} \right] P_{c} f = \tilde{\mathcal{F}}^*_{\pm} |(\xi^2+\lambda^2)^2| \tilde{\mathcal{F}}_{\pm} \left[ \begin{array}{cc}
L_{-}^{-\frac12} & 0 \\
0 &  L_{-}^{\frac12}
\end{array} \right] \tilde{f}.
\end{eqnarray*}
Hence,
\begin{eqnarray*}
\mathcal{H}^2 (P_{c} \tilde{f}) = \left[ \begin{array}{cc}
L_{-}^{\frac12} & 0 \\
0 &  L_{-}^{-\frac12}
\end{array} \right] \tilde{\mathcal{F}}^*_{\pm} |(\xi^2+\lambda^2)^2| (\tilde{\mathcal{F}}_{\pm} \left[ \begin{array}{cc}
L_{-}^{-\frac12} & 0 \\
0 &  L_{-}^{\frac12}
\end{array} \right]) \tilde{f},
\end{eqnarray*}
or
\begin{eqnarray*}
\mathcal{H}^2 (P_{c} \tilde{f})(x) = \left[ \begin{array}{c}
\int (L_{-}^{\frac12} \bar{\tilde{u}}_\xi) (x) |(\xi^2+\lambda^2)^2| \int \tilde{u}_\xi (y) (L_{-}^{-\frac12} \tilde{f}_1)(y) dy d\xi \\
\int (L_{-}^{-\frac12} \bar{\tilde{u}}_\xi) (x) |(\xi^2+\lambda^2)^2| \int \tilde{u}_\xi (y) (L_{-}^{\frac12} \tilde{f}_2) (y) dy d\xi.
\end{array} \right]
\end{eqnarray*}
The inverse operations in these arguments are justified by the fact that 
\begin{eqnarray*}
L^2 = \text{Ker}({\mathcal{H}}) \oplus \text{Ker}({\mathcal{H}^*})^\perp.
\end{eqnarray*}

We desire an oscillatory integral formulation for $\mathcal{H} P_c$.  The continuous spectrum is spanned by the values $\pm (\lambda^2 + \xi^2)$ for all $|\xi| \geq 0$.  Hence, we seek a diagonalization of the form
\begin{eqnarray*}
\mathcal{H} P_c = Q^{-1} \left[ \begin{array}{cc}
(\lambda^2+\xi^2) & 0 \\
0 & -(\lambda^2+\xi^2)
\end{array} \right] Q.
\end{eqnarray*}

Using the above analysis for $L_{-}^{\frac12} L_{+} L_{-}^{\frac12}$, we see that
\begin{eqnarray*}
Q & = & \frac{1}{\sqrt{2}} \left[ \begin{array}{cc}
i (\lambda^2 + \xi^2)^{\frac12} \mathcal{F} L_{-}^{-\frac12} & (\lambda^2 + \xi^2)^{-\frac12} \mathcal{F} L_{-}^{\frac12} \\
-i (\lambda^2 + \xi^2)^{\frac12} \mathcal{F} L_{-}^{-\frac12} &  (\lambda^2 + \xi^2)^{-\frac12} \mathcal{F} L_{-}^{\frac12}
\end{array} \right] \\
& = & T \mathcal{F} P, \\
Q^{-1} & = & \frac{1}{\sqrt{2}} \left[ \begin{array}{cc}
-i L_{-}^{\frac12} \mathcal{F}^*  (\lambda^2 + \xi^2)^{-\frac12} & i L_{-}^{\frac12} \mathcal{F}^* (\lambda^2 + \xi^2)^{-\frac12} \\
L_{-}^{-\frac12} \mathcal{F}^* (\lambda^2 + \xi^2)^{\frac12} & L_{-}^{-\frac12} \mathcal{F}^* (\lambda^2 + \xi^2)^{\frac12}
\end{array} \right] \\
& = & P^{-1} \mathcal{F}^* T^{-1},
\end{eqnarray*}
where 
\begin{eqnarray*}
P = \left[ \begin{array}{cc}
 L_{-}^{-\frac12} & 0 \\
0 &   L_{-}^{\frac12}
\end{array} \right] 
\end{eqnarray*}
and
\begin{eqnarray*}
T = \left[ \begin{array}{cc}
i (\lambda^2 + \xi^2)^{\frac12} & (\lambda^2 + \xi^2)^{-\frac12}  \\
-i (\lambda^2 + \xi^2)^{\frac12}  &  (\lambda^2 + \xi^2)^{-\frac12} 
\end{array} \right] 
\end{eqnarray*}

Note that we have for $\vec{f} = P_c \vec{f}$ 
\begin{eqnarray*}
\mathcal{H} \left[ \begin{array}{c}
f_1 \\
f_2
\end{array} \right] = 
\left[ \begin{array}{c}
i L_{-} f_2 \\
-i L_{+} f_1
\end{array} \right],
\end{eqnarray*}
which is exactly what results from the decomposition.  The resulting integral equation is
\begin{eqnarray*}
\mathcal{H} P_c \vec{f} = \left[ \begin{array}{c}
-i L_{-}^{\frac{1}{2}} \mathcal{F}^* \mathcal{F} L_{-}^{\frac{1}{2}} f_2 \\
i L_{-}^{-\frac{1}{2}} \mathcal{F}^* (\lambda^2 + \xi^2)^2 \mathcal{F} L_{-}^{-\frac{1}{2}} f_1
\end{array} \right].
\end{eqnarray*}

So, since we have a pseudodifferential operator representation of $L_{-}^{\frac{1}{2}}$, we could write $\mathcal{H} P_c$ in terms of an oscillatory integral.

\begin{rem}
We have now made precise the definition
\begin{eqnarray}
\label{eqn:uvrep1}
\tilde{\phi}_{\xi} & = & \left[ \begin{array}{cc}
i u_\xi & v_\xi  \\
-i u_\xi & v_\xi
\end{array} \right] \\
\label{eqn:uvrep2}
& = & \left[ \begin{array}{cc}
i (\xi^2 + \lambda^2) L_{-}^{-\frac12} \tilde{u}_\xi & (\xi^2 + \lambda^2)^{-1} L_{-}^{\frac12} \tilde{u}_\xi  \\
-i (\xi^2 + \lambda^2) L_{-}^{-\frac12} \tilde{u}_\xi & (\xi^2 + \lambda^2)^{-1} L_{-}^{\frac12} \tilde{u}_\xi
\end{array} \right],
\end{eqnarray}
where using the pseudo-differential analysis above, $L_{-}^{\pm \frac12} \tilde{u}_\xi$ is well-defined.
\end{rem}

\begin{proof}[Proof of Theorem \ref{thm:rep}] 

If 
\begin{eqnarray*}
f = \left[ \begin{array}{c}
f_1 \\
f_2 
\end{array} \right] \in \sigma_{ac} (\mathcal{H}),
\end{eqnarray*}
then
\begin{eqnarray*}
\tilde{P} f = \left[ \begin{array}{cc}
 L_{-}^{\frac{1}{2}} & 0 \\
0 & L_{-}^{-\frac{1}{2}}
\end{array} \right] \left[ \begin{array}{c}
f_1 \\
f_2 
\end{array} \right] \in \sigma_{ac} (\tilde{\mathcal{H}}),
\end{eqnarray*}
where 
\begin{eqnarray*}
\tilde{\mathcal{H}} = \left[ \begin{array}{cc}
L_{-}^{\frac{1}{2}} L_{+} L_{-}^{\frac{1}{2}} & 0 \\
0 & L_{-}^{\frac{1}{2}} L_{+} L_{-}^{\frac{1}{2}}
\end{array} \right].
\end{eqnarray*}

Assume $f \in \mathcal{S}$, which we will relax later.  Let $\psi$ be the PDO representation of $\tilde{P}$ and $\Phi_\xi (x)$ is the vector where both elements are the distorted Fourier basis function $\phi_\xi$ for the self-adjoint operator $L_{-}^{\frac{1}{2}} L_{+} L_{-}^{\frac{1}{2}}$.
Then, we have
\begin{eqnarray*}
(\mathcal{G} f)(\xi) & = & T \langle \psi f , \Phi_\xi \rangle \\
& = & T \langle f , \psi^* \Phi_\xi \rangle \\
& = & \langle f , \tilde{\Phi}_\xi \rangle ,
\end{eqnarray*} 
where 
\begin{eqnarray*}
T = \left[ \begin{array}{cc}
i (\lambda^2 + \xi^2)^{\frac{1}{2}} &  (\lambda^2 + \xi^2)^{-\frac{1}{2}} \\
-i (\lambda^2 + \xi^2)^{\frac{1}{2}} &  (\lambda^2 + \xi^2)^{-\frac{1}{2}}
\end{array} \right],
\end{eqnarray*}
and $\tilde{\Phi}_\xi$ is uniquely defined in the sense of distributions as 
\begin{eqnarray*}
P(x,\xi) e^{i x \xi} +\tilde{u}_\xi (x,\xi),
\end{eqnarray*}
where 
\begin{eqnarray*}
\tilde{u} = P(x,D) u_\xi (x),
\end{eqnarray*}
and $\tilde{u} \in \mathcal{S}$.  Then,
\begin{eqnarray*}
(\mathcal{G} f)(\xi) = \int f \tilde{\Phi}_\xi dx.
\end{eqnarray*}

Similarly, we have
\begin{eqnarray*}
(\mathcal{G}^{-1} f)(x) = \int f(\xi) \tilde{\Phi}^{-1}_\xi d\xi,
\end{eqnarray*}
where
\begin{eqnarray*}
\tilde{\Phi}^{-1}_\xi = P^{-1} (T^{-1})^{*} \Phi^*_\xi (x),
\end{eqnarray*}
where $(T^{-1})^{*}$ represents the adjoint of the multiplier matrix $T^{-1}$ above.

The modified Fourier transforms are in fact variations on the expansion involving the matrix $Q$.  
\end{proof}

Note that since $P(T^{-1})^*, TP^* \in S^0$, the regularity properties of $\tilde{\Phi}_\xi$, $\tilde{\Phi}^{-1}$ are the same as those of $\phi_\xi$ as described in \ref{thm:dfbu} with modifications to the explicit formulas.  

Note also the manipulations in the proof of \ref{thm:rep} are valid in the sense of distributions, hence the assumption $f \in \mathcal{S}$.  However, as in the dispersive estimates below, similar estimates are seen to hold for less regular initial data through standard duality and limiting arguments.

\begin{cor}
\label{lin:cor}
As a result of the decomposition, we have a new proof of the fact that
\begin{eqnarray*}
\| P_{c} e^{i t \mathcal{H}} f \|_{L^2} \lesssim \| f \|_{L^2}.
\end{eqnarray*}
\end{cor}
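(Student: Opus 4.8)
The plan is to use the diagonalization of $\mathcal{H}$ on the continuous spectrum to conjugate $e^{it\mathcal{H}}P_c$ into multiplication by a symbol of modulus one, and then invoke the uniform $L^2$ bounds on the conjugating operators established in Theorem~\ref{thm:rep}.

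First I would record that, from the diagonalization constructed above,
\begin{eqnarray*}
\mathcal{H} P_c = Q^{-1} \left[ \begin{array}{cc}
(\lambda^2+\xi^2) & 0 \\
0 & -(\lambda^2+\xi^2)
\end{array} \right] Q,
\end{eqnarray*}
with $Q = T\mathcal{F}P$, $Q^{-1} = P^{-1}\mathcal{F}^{*}T^{-1}$, where $Q$ annihilates the discrete subspace, $Q^{-1}Q = P_c$, and $QQ^{-1} = I$ (these are the analogues of $\mathcal{G}_\pm^{-1}\mathcal{G}_\pm \phi = P_c\phi$ from Theorem~\ref{thm:rep}). By Theorem~\ref{thm:rep}, equivalently by the boundedness statements for $\mathcal{G}_\pm$ and $\mathcal{G}_\pm^{-1}$, both $Q$ and $Q^{-1}$ are bounded on $L^2$ with norm $\lesssim 1$; the only nonelementary input is Theorem~\ref{thm:rep1}, which guarantees that $L_{-}^{\pm\frac12}$ are pseudodifferential operators of order $\pm 1$, so that the building blocks $\mathcal{F}L_{-}^{\mp\frac12}$ and $L_{-}^{\pm\frac12}\mathcal{F}^{*}$ become $L^2$-bounded once paired with the compensating scalar factors $(\lambda^2+\xi^2)^{\pm\frac12}$ sitting inside $T$ and $T^{-1}$.

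Next, for $f\in\mathcal{S}$ I would push the propagator through the diagonalization: since $P_c$ commutes with $\mathcal{H}$ and $QQ^{-1}=I$, one has $(\mathcal{H}P_c)^n = Q^{-1}D^nQ$ for $n\ge 1$ with $D$ the middle multiplier, so summing the exponential series yields
\begin{eqnarray*}
e^{it\mathcal{H}} P_c f = Q^{-1} \left[ \begin{array}{cc}
e^{it(\lambda^2+\xi^2)} & 0 \\
0 & e^{-it(\lambda^2+\xi^2)}
\end{array} \right] Q f .
\end{eqnarray*}
Because $\lambda^2+\xi^2\in\reals$, the middle matrix is a Fourier multiplier whose symbol has modulus one for every $\xi$ and every $t$, hence it acts isometrically on $L^2$. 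Composing the three bounds, and using $P_c e^{it\mathcal{H}} = e^{it\mathcal{H}} P_c$,
\begin{eqnarray*}
\| P_c e^{it\mathcal{H}} f \|_{L^2} \leq \| Q^{-1} \|_{L^2\to L^2}\, \| Q \|_{L^2\to L^2}\, \| f \|_{L^2} \lesssim \| f \|_{L^2},
\end{eqnarray*}
uniformly in $t$. The general case $f\in L^2$ then follows by density of $\mathcal{S}$ in $L^2$ together with this a priori bound.

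The main obstacle, and the reason the statement is not automatic, is that $\mathcal{H}$ is not self-adjoint, so $e^{it\mathcal{H}}$ need not be bounded on all of $L^2$, and the conjugator $Q$ is not unitary. The content of the argument is therefore entirely carried by the two preceding facts: that $P_c$ removes the generalized kernel of $\mathcal{H}$ (and any eigenvalues lying off the real axis), and that the non-unitary $Q$ together with its partial inverse $Q^{-1}$ are nonetheless uniformly $L^2$-bounded, which in turn rests on the pseudodifferential calculus for $L_{-}^{\pm\frac12}$ furnished by Theorem~\ref{thm:rep1}. Once those are in hand, the corollary is a one-line consequence of the diagonalization.
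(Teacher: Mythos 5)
Your proposal is correct and follows the same route as the paper's (much terser) proof. The paper simply notes that the result "follows from mapping properties of pseudodifferential operators and the fact that the self-adjoint distorted Fourier transform is an $L^2$ isometry"; you have correctly unpacked that into the chain $e^{it\mathcal{H}}P_c = Q^{-1}e^{itW}Q$ from \eqref{eqn:intrep}, the $L^2$-boundedness of $Q$ and $Q^{-1}$ furnished by Theorem~\ref{thm:rep} (which in turn rests on the pseudodifferential calculus of Theorem~\ref{thm:rep1} applied to $L_-^{\pm 1/2}$ and the isometry of the self-adjoint distorted Fourier transform for $L_-^{1/2}L_+L_-^{1/2}$), and the unimodularity of the middle multiplier $e^{itW}$. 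One small remark: your re-derivation of $e^{it\mathcal{H}}P_c = Q^{-1}e^{itW}Q$ by summing the series and using $QQ^{-1}=I$ is not strictly needed, since the paper asserts \eqref{eqn:intrep} directly; and if you do want to rederive it, be aware that the theorem only records $\mathcal{G}_\pm^{-1}\mathcal{G}_\pm = P_c$, so $QQ^{-1}=I$ requires the additional (true, but not restated there) observation that the distorted transform is onto when restricted to the continuous spectral subspace. These are minor points; the essential content of the proof agrees with the paper's.
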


\begin{proof}
This follows simply from mapping properties of pseudodifferential operators and the fact that the self-adjoint distorted Fourier transform is an $L^2$ isometry.
\end{proof}

\begin{rem}
Note that for convenience in terms of defining the resolvent, our result has been proved here only in $\reals^3$.  However, using similar bounds developed in \cite{Ag} for higher dimensional resolvents, we expect that a result similar to that of \ref{thm:rep} holds in all dimensions and as a result similar estimates will follow below.  The main difficulties presented would be a thorough discussion of the spectrum of $\mathcal{H}$ as some of the known numerical techniques are unique to $\reals^3$.  
\end{rem}

\section{Time Decay}
\label{lin:tdecay}

Using our distorted Fourier basis, we have that a solution to the problem
\begin{eqnarray}
\label{eqn:intrep}
e^{i \mathcal{H} t} P_c \phi = Q^{-1} e^{i t W} Q \phi,
\end{eqnarray}
for
\begin{eqnarray*}
W = \left[ \begin{array}{cc}
(\lambda^2+\xi^2) & 0 \\
0 & -(\lambda^2+\xi^2)
\end{array} \right].
\end{eqnarray*}
The structure on $Q$ allows us to do oscillatory integration in order to study the properties of $e^{i \mathcal{H} t}$.  First of all, we prove Theorem \ref{thm:tdec}.  We will fix the notation $K = K^+$.

\begin{proof}[Proof of \ref{thm:tdec}]

Using matrix notation, we have
\begin{eqnarray*}
\{ \mathcal{G} \vec{\psi} \} (\xi) = \int \tilde{\phi}_{\xi} (x) \vec{\psi}(x) dx,
\end{eqnarray*}
where
\begin{eqnarray*}
\vec{\psi} (x) = \left[ \begin{array}{c}
\psi_1 \\
\psi_2
\end{array} \right],
\end{eqnarray*}
and $\tilde{\phi}_\xi$ is given by \eqref{eqn:uvrep1}.

Looking at the integral representation, we have
\begin{eqnarray*}
e^{i t \mathcal{H}} P_{c} \vec{\psi} (x) = \int_\xi \tilde{\phi}_{\xi}^{-1} (x) e^{itW} \int_y \tilde{\phi}_{\xi} (y) \vec{\psi} (y) dy d\xi.
\end{eqnarray*}
Let $\chi \in C^\infty_c$, be a smooth, cut-off function chosen such that the iteration techniques in Theorem \ref{thm:dfbu} hold for $\xi \in \reals^d \setminus \spt (\chi)$.  Then, take
\begin{eqnarray}
\label{eqn:stph1}
e^{i t \mathcal{H}} P_{c} \vec{\psi} (x) & = & \int_\xi \chi(\xi) \tilde{\phi}_{\xi}^{-1} (x) e^{itW} \int_y \tilde{\phi}_{\xi} (y) \vec{\psi} (y) dy d\xi \\
\label{eqn:stph2}
& + & \int_\xi [1-\chi(\xi)] \tilde{\phi}_{\xi}^{-1} (x) e^{itW} \int_y \tilde{\phi}_{\xi} (y) \vec{\psi} (y) dy d\xi .
\end{eqnarray}
Hence, we must bound
\begin{eqnarray*}
I & = & \int_\xi e^{i x \xi} e^{\pm it (\xi^2 + \lambda^2)} \int_y e^{-i y \xi} \psi (y) dy d\xi, \\
II & = & \int_\xi [\chi(\xi) + (1-\chi(\xi))]  g^{-1}_\xi (x) e^{ \pm it (\xi^2 + \lambda^2)} \int_y e^{-i y \xi} \psi (y) dy d\xi, \\
III & = & \int_\xi [\chi(\xi) + (1-\chi(\xi))]   e^{i x \xi} e^{ \pm it (\xi^2 + \lambda^2)} \int_y g_\xi (y) \psi (y) dy d\xi, \\
IV & = & \int_\xi [\chi(\xi) + (1-\chi(\xi))]   g^{-1}_\xi (x) e^{ \pm it (\xi^2 + \lambda^2)} \int_y g_\xi (y) \psi (y) dy d\xi.
\end{eqnarray*}

From henceforward, we work only with the term
\begin{eqnarray*}
\frac{e^{ i |x| |\xi|}}{|x|}
\end{eqnarray*}
from $g_\xi$, as the analysis for the exponentially decaying term will follow using simpler versions of the methods for this case.  Many of the techniques used are developed from the presentation in \cite{S1}.  The challenge lies mostly in that $\partial_\xi^\alpha |\xi|$ is not bounded near $0$ for $|\alpha| \geq 2$.  Thus, we must be careful near the origin using stationary phase arguments since error terms require a minimum of two derivatives.  A discussion of stationary phase complete with proofs is given in \cite{EvZw} or \cite{Ste}.  Take the integral,
\begin{eqnarray*}
\mathcal{I} = \int h(x) e^{i \tau P(x)} dx,
\end{eqnarray*}
where $h (x) \in C^\infty_c$, $P(x) \in C^\infty$.  Assume that $\partial_x P(0) = 0$ and $\partial_x^2 P(0) \neq 0$.  Then, the principle of stationary phase gives
\begin{eqnarray*}
\mathcal{I} \sim \tau^{-\frac{d}{2}} \sum_{j=0}^\infty a_j \tau^{-j},
\end{eqnarray*}
where the asymptotic terms in the stationary phase expansion are given by
\begin{eqnarray*}
a_j = L^j h(0),
\end{eqnarray*}
for $L^j$ an order $2j$ differential operator as discussed in \cite{EvZw}.

Equation $I$ is bounded using standard techniques of contour integration from the Linear Schr\"odinger equation.  In particular, we have
\begin{eqnarray*}
\| I \|_{L^\infty} \lesssim t^{-\frac{d}{2}} \| \vec{\psi} \|_{L^1}.
\end{eqnarray*}

Before we investigate further, we recall some properties of the functions $\partial^\alpha_{\xi_0} g_{\xi_0}$.  From the expression \eqref{eqn:dfbg} for $g_{\xi_0}$, we know that
\begin{eqnarray*}
g_{\xi_0} = K*(\tilde{V} (x,\xi_0) e^{ix \cdot \xi_0}) + K*(\tilde{V}(x,D) g_{\xi_0}),
\end{eqnarray*}
where
\begin{eqnarray*}
\tilde{V} g_{\xi_0} = \tilde{V}_1 (I - \tilde{V}_2 K \tilde{V}_1)^{-1} \tilde{V}_2 (K * (\tilde{V} (x,\xi_0) e^{i \cdot \xi_0})).
\end{eqnarray*}
From Fredholm Theory and the spectral assumptions on $\mathcal{H}$, $(I - P_{\xi_0})^{-1}$ is well-defined, hence we can show that $\tilde{V} g_{\xi_0}$ is smooth in $|\xi_0|$ and $\xi_0$.  Also, $K$ is smooth with respect to $|\xi|$, $V_\xi e^{i x \xi}$ is smooth with respect to $\xi$.  As a result, as proved in Theorem \ref{thm:dfbu}, $g_{\xi_0} = K*f_0$ where $f_0$ depends smoothly on $|\xi|$ and $\xi$.  Therefore, for $\xi$ near $0$, we can take up to $3$ derivatives of the standard stationary phase operator 
$$L = \frac{\xi}{2it |\xi|^2} \partial_\xi$$
before we lose integrability in $\xi$.  For $\xi_0$ large enough, from Theorem \ref{thm:dfbu}, we have
\begin{eqnarray*}
g_{\xi_0} = K * f,
\end{eqnarray*}
where
\begin{eqnarray*}
f = e^{i x \cdot \xi_0} f_0(x,\xi_0),
\end{eqnarray*}
where $f_0(x,\xi_0)$ behaves like a symbol in $S^{2}$.

For \eqref{eqn:stph2}, we use the principle of nonstationary phase and the principle of stationary phase in different regions.  We have
\begin{eqnarray*}
\int_\xi [1-\chi(\xi)] \tilde{\phi}_{\xi}^{-1} (x) e^{it W} \int_y \tilde{\phi}_{\xi} (y) \vec{\psi} (y) dy d\xi,
\end{eqnarray*}
where $1-\chi$ is supported away from $0$.  In particular, we have integrals of the type
\begin{eqnarray*}
\int_\xi [1-\chi(\xi)] (e^{i x \xi} + \tilde{g}_\xi (x)) e^{\pm it (\xi^2 + \lambda^2)} \int_y (e^{-i y \xi} + g_\xi (y)) \psi (y) dy d\xi,
\end{eqnarray*}
where $g$ and $\tilde{g}$ are of the same form described above.
Hence, we must bound the following
\begin{eqnarray*}
II^* & = & \int_\xi [1-\chi(\xi)]  \bar{g}_\xi (x) e^{ it (\xi^2 + \lambda^2)}  e^{-i y \xi} \psi (y)  d\xi, \\
III^* & = & \int_\xi [1-\chi(\xi)] e^{i x \xi} e^{ it (\xi^2 + \lambda^2)}  g_\xi (y) \psi (y)  d\xi, \\
IV^* & = & \int_\xi [1-\chi(\xi)] \bar{g}_\xi (x) e^{ it (\xi^2 + \lambda^2)}  g_\xi (y) \psi (y)  d\xi.
\end{eqnarray*}
The bounds for $e^{ -it (\xi^2 + \lambda^2)}$ will follow through similar arguments.

For integrals of type $II^*$, we have oscillatory integrals of the form
\begin{eqnarray}
\label{eqn:dec2}
\int e^{i (-|z| |\xi_0| - (x-z) \xi_0 + t \xi_0^2 - y \cdot \xi_0)} \frac{\bar{f}_{0}(x-z, \xi_0)}{|z|(\lambda^2 + \xi_0^2)} d\xi_0.
\end{eqnarray}
Looking at the phase function, we have 
\begin{eqnarray*}
\phi (\xi_0) & = & -|z| |\xi_0| - (x-z) \xi_0 + t \xi_0^2 - y \cdot \xi_0, \\
\nabla_{\xi_0} \phi (\xi_0) & = &   2 t \xi_0 -(x-z+ y) - |z| \frac{\xi_0}{|\xi_0|}, \\
\nabla^2_{\xi_0} \phi (\xi_0) & = & 2t I_d - \frac{|z|}{|\xi_0|} (I_d - \frac{\xi_0 \otimes \xi_0}{|\xi_0|^2}).
\end{eqnarray*}
If we restrict $\xi_0$ to a region such that 
\begin{eqnarray*}
|\xi_0| \geq \frac{ |z-y-x| + |z| }{2t} + 1,
\end{eqnarray*} 
then $\phi (\xi_0)$ has no critical points.  As a result, we can use the principle of non-stationary phase on this region with the decay properties of the function $f_0$ to see we have decay like $t^{-N}$ for any $N$.  

Let us hence assume that we are restricted a region
\begin{eqnarray*}
|\xi_0| \leq \frac{ |z+y-x| + |z| }{2t} + 1,
\end{eqnarray*} 
so $\phi$ has at least one critical point.  In fact, the critical point occurs where
\begin{eqnarray}
\label{eqn:cp}
\xi_0 \left( 2t - \frac{|z|}{|\xi_0|} \right) = z+y-x.
\end{eqnarray}
For $|z| - |z-y-x|<  0$, we have only
\begin{eqnarray}
\label{eqn:cpabs}
|\xi_0| = \frac{|z-y-x| + |z| }{2t}.
\end{eqnarray}
Otherwise, we have also
\begin{eqnarray}
\label{eqn:cpabs1}
|\xi_0| = \frac{ |z| - |z-y-x| }{2t}.
\end{eqnarray}

As a result, all critical points occur on one of two spheres.  Using \eqref{eqn:cpabs} and \eqref{eqn:cpabs1}, we have that if $z$, $y$ and $x$ are such that a critical point exists, that critical point is unique.  Hence, we can define a cut-off function $\chi_{x,y,z} \in C^\infty_c (\reals^d)$ such that 
\begin{eqnarray*}
\chi_{x,y,z} (\xi) = \left\{ \begin{array}{c}
1 \ \text{for} \ |\xi_0| \leq \frac{ |z-y-x| + |z| }{2t} + \frac{M}{4}, \\
0 \ \text{for} \ |\xi_0| \geq \frac{ |z-y-x| + |z| }{2t} + \frac{M}{2}.
\end{array} \right.
\end{eqnarray*}

Let us assume that a critical point exists, say $\xi_0^c$.  If $|\xi_0^c| < \frac{|z|}{2t}$, the Hessian matrix is at least of rank $1$ as $\xi \otimes \xi$ is a rank $1$ matrix.  So, there is at least one nondegenerate direction for $\xi$.  After making an orthogonal change of coordinates bringing that nondegenerate direction to $\xi_1$, using stationary phase on $\reals$, we have decay of the form
\begin{eqnarray*}
\| \text{\eqref{eqn:dec2}} \|_{L^\infty} \lesssim t^{-\frac{1}{2}}.
\end{eqnarray*}
However, in the integral, we have $\frac{1}{|z|} < \frac{1}{|\xi_0|t}$ and $|\xi_0| > c > 0$, so using the decay of $f_0$ in $z$, the overall decay is once again
\begin{eqnarray*}
\| \text{\eqref{eqn:dec2}} \|_{L^\infty} \lesssim t^{-\frac{d}{2}},
\end{eqnarray*}
where the error is bounded by 
\begin{eqnarray*}
\sup_{|\alpha| \leq 2} | \partial_{\xi_1}^\alpha \frac{f_{0}(x-z, \xi_0)}{(\lambda^2 + \xi_0^2)} |  .
\end{eqnarray*}
As $f_0 \in S^2$, this follows easily.

For $|\xi_0^c| > \frac{|z|}{2t}$, the Hessian is nondegenerate. 
We can thus apply stationary phase in $\xi$ to get decay of the form 
\begin{eqnarray*}
\| \text{\eqref{eqn:dec2}} \|_{L^\infty} \lesssim t^{-\frac{d}{2}},
\end{eqnarray*}
where we have once again used the regularity of $f_0$ is $x$ and $\xi$.  
Then, given the uniform decay of $f_0$ and boundedness in $y$ and $x$, we have uniform boundedness with decay of type $t^{-\frac{d}{2}}$.  The result for type $III^*$ follows similarly.   

The analysis for oscillatory integrals of type $IV^*$ is similar in that the phase function becomes
\begin{eqnarray*}
\phi (\xi_0) & = & |z| |\xi_0| + (x-z) \xi_0 + t \xi_0^2 - |z_0| |\xi_0| - (y-z_0) \xi_0, \\
\nabla_{\xi_0} \phi (\xi_0) & = & 2 t \xi_0 + (x-z) - (y-z_0) + (|z|-|z_0|) \frac{\xi_0}{|\xi_0|}, \\
\nabla^2_{\xi_0} \phi (\xi_0) & = & 2t I_d + \frac{|z|-|z_0|}{|\xi_0|} (I_d - \frac{\xi_0 \otimes \xi_0}{|\xi_0|^2} ).
\end{eqnarray*}
Hence, where critical points exist, we split up the regions of integration into $|\xi_0| > \frac{|z|-|z_0|}{2t }$ and $|\xi_0| < \frac{|z|-|z_0|}{2 t}$.  Once again, we have stationary phase in full on the first region and stationary phase in at least one direction, coupled with the fact that $\frac{1}{|z|} < \frac{1}{2 |\xi_0| t}$.  Away from the critical points, we once again apply non-stationary phase. 

Let us now analyze \eqref{eqn:stph1}.  In particular, we have integrals of the type
\begin{eqnarray*}
\int_\xi [\chi(\xi)] (e^{i x \xi} + \bar{g}_\xi (x)) e^{it (\xi^2 + \lambda^2)} \int_y (e^{-i y \xi} + g_\xi (y)) \psi (y) dy  d\xi.
\end{eqnarray*}
Thus, we have to bound
\begin{eqnarray*}
II^{**} & = & \int_\xi [\chi(\xi)] g^{-1}_\xi (x) e^{ it (\xi^2 + \lambda^2)}  e^{-i y \xi}   d\xi, \\
III^{**} & = & \int_\xi [\chi(\xi)] e^{i x \xi} e^{ it (\xi^2 + \lambda^2)}  g_\xi (y) d\xi, \\
IV^{**} & = & \int_\xi [\chi(\xi)] g^{-1}_\xi (x) e^{  it (\xi^2 + \lambda^2)}  g_\xi (y)  d \xi.
\end{eqnarray*}
Once again, the bounds for $e^{-it (\xi^2 + \lambda^2)}$ will follow from similar techniques.

For integrals of type $II^{**}$ and $III^{**}$, we have an oscillatory integral of the form
\begin{eqnarray}
\label{eqn:dec1}
\int e^{i (-|x| |\xi_0| + t \xi_0^2 - y \xi_0)} \frac{f_{0} (x-z)}{|x|} d\xi_0.
\end{eqnarray}
The phase function is 
\begin{eqnarray*}
\phi (\xi_0) & = & -|x| |\xi_0| + t \xi_0^2 - y \xi_0, \\
\nabla_{\xi_0} \phi (\xi_0) & = & 2 t \xi_0 - y - |x| \frac{\xi_0}{|\xi_0|}, \\
\nabla^2_{\xi_0} \phi (\xi_0) & = & 2t I_d - \frac{|x|}{|\xi_0|} (I_d - \frac{\xi_0 \otimes \xi_0}{|\xi_0|^2}).
\end{eqnarray*}
Let us begin with an integral of type $II^{**}$.  After making the orthogonal change of coordinates $\xi_1 \to \frac{y}{|y|}$ and moving to polar coordinates in $\xi$, we need to bound
\begin{eqnarray*}
\label{eqn:II}
\| II^{**} \|_{L^\infty} & \lesssim & \| \int_0^{2 \pi} \int_0^\pi \int_0^\infty \int \chi(r) \frac{e^{-ir|z_0|}}{|z_0|} e^{itr^2} e^{-ir|y| \cos(\theta)} \\
& \times &  f_0 (x-z_0,r \sin (\theta) \cos (\phi), r \sin (\theta) \sin (\phi), r \cos (\theta), r)  r^2 \sin (\theta) dr d\theta d\phi \|_{L^\infty}.
\end{eqnarray*}
If we Taylor expand $f_0$ in terms of $(r \sin (\theta) \cos (\phi), r \sin (\theta) \sin (\phi), r \cos (\theta), r)$, then we can integrate in $\phi$.  In which case, all terms in the expansion with odd powers of $\cos (\phi)$ or $\sin (\phi)$ vanish under integrating out, leaving us with a function of the form
\begin{eqnarray*}
\tilde{f}_0 ( r^2 \cos^2 ( \theta ), r ).
\end{eqnarray*}
Integrating by parts in $\theta$, we have
\begin{eqnarray*}
\| II^{**} \|_{L^\infty} & \lesssim & \| \int_0^{2 \pi} \int_0^\pi \int_0^\infty \chi(r) \frac{e^{-ir|z_0|}}{|z_0|} e^{itr^2} e^{-ir|y| \cos(\theta)} \\
& \times &  \tilde{f}_0 (x-z_0,r \cos^2 ( \theta ), r)  r^2 \sin (\theta)  dr d\theta d\phi \|_{L^\infty} \\
& \lesssim & \| \int_0^\infty \chi(r) \frac{e^{-ir|z_0|}}{|z_0|} e^{itr^2} \frac{\sin(r|y|)}{|y|} \\
& \times & \tilde{f}_0 (x-z_0, r)  r  dr \|_{L^\infty} \\
& + & \| \int_0^\pi \int_0^\infty \chi(r) \frac{e^{-ir|z_0|}}{|z_0|} e^{itr^2} \frac{e^{-ir|y| \cos(\theta)}}{|y|} \\
& \times & \partial_\theta \tilde{f}_0 (x-z_0, r \cos^2 (\theta), r)  r dy dz_0  dr d\theta \|_{L^\infty} \\
& = & \| \int_0^\infty \chi(r) \frac{e^{-ir|z_0|}}{|z_0|} e^{itr^2} \frac{\sin(r|y|)}{|y|} \\
& \times & \tilde{f}_0 (x-z_0, r)  r  dr \|_{L^\infty} \\
& + & \| \int_0^\pi \int_0^\infty \chi(r) \frac{e^{-ir|z_0|}}{|z_0|} e^{itr^2} \frac{\sin(-ir|y| \cos(\theta))}{|y|} \\
& \times & \partial_\theta \tilde{f}_0 (x-z_0, r \cos^2 (\theta), r)  r  dr d\theta \|_{L^\infty},
\end{eqnarray*}
since for $n$ odd, 
\begin{eqnarray*}
\int_{-1}^1 e^{i \mu x} x^n dx = i \int_{-1}^1 \sin (\mu x) x^n dx.
\end{eqnarray*}
Note that the boundedness in $y$ and $\theta$ is hence maintained after the integration by parts.

Let us extend the region of integration in $r$ to $\reals$.  Due to the nature of the oscillatory functions involved, we experience no loss in doing so.  Then, using the linear Schr\"odinger equation dispersion, we have
\begin{eqnarray*}
\| II^{**} \|_{L^\infty} & \lesssim & \frac{1}{t} \| \int_{-\infty}^{\infty} e^{itr^2}   \\
& \times & \partial_r \left[ \frac{e^{-ir|z_0|}}{|z_0|} \frac{e^{-ir|y|}-e^{ir|y|}}{|y|} \chi(r) f_0 (x-z_0,r)  r \right] dr \|_{L^\infty} \\
& \lesssim & \frac{1}{t^{\frac{3}{2}}} \| \int_{-\infty}^{\infty} |\int \frac{1}{|z_0||y|} \\
& \times & [ \mathcal{F}^{-1} \left[ \chi(r) f_0 (x-z_0, r)  r \right] (u + |z_0| + |y|) (x-z_0) \\
& - & \mathcal{F}^{-1} \left[ \chi(r) f_0 (x-z_0, r)  r \right] (u + |z_0| - |y|) (x-z_0) dy | du  \|_{L^\infty}.
\end{eqnarray*}
From the estimate
\begin{eqnarray*}
\| \hat{u} \|_{L^1} \lesssim \sup_{|\alpha| \leq d+1} \| \partial^\alpha u \|_{L^1},
\end{eqnarray*}
coupled with the facts that $\chi \in C^\infty_0$, $f_0 \in C^\infty_r$, and $f_0$ is rapidly decaying in $x$, we have
\begin{eqnarray*}
\| II^{**} \|_{L^\infty} \lesssim \frac{1}{t^{\frac{3}{2}}} \| f \|_{L^1}.
\end{eqnarray*}

For the integrals of type $III^{**}$, we immediately apply the linear Schr\"odinger estimate to get
\begin{eqnarray*}
\| III^{**} \|_{L^\infty} & \lesssim & \frac{1}{t^{\frac{3}{2}}} \int | \int \chi (\xi) \frac{e^{i |y-x_1| |\xi|} }{|y-x_1|} e^{i y \xi} f_0 (x_1, \xi, |\xi|) f(y) dx_1 d\xi | dy \\
& \lesssim & \frac{1}{t^{\frac{3}{2}}},
\end{eqnarray*}
using once again the smoothness and decay of $\chi$, $f_0$.

The analysis for oscillatory integrals of type $IV^{**}$ is similar to that for type $II^{**}$, except now we have no $\theta$ dependence in the phase.  Thus, we have phase functions of the form
\begin{eqnarray*}
\phi (\xi_0) & = & -|x| |\xi_0| + t \xi_0^2 + |y| |\xi_0|, \\
\nabla_{\xi_0} \phi (\xi_0) & = & 2 t \xi_0 + (|y|-|x|) \frac{\xi_0}{|\xi_0|}, \\
\nabla^2_{\xi_0} \phi (\xi_0) & = & 2t I_d + \frac{|y|-|x|}{|\xi_0|} (I_d - \frac{\xi_0 \otimes \xi_0}{|\xi_0|^2}).
\end{eqnarray*}
At this point, it becomes convenient to move to polar coordinates in $\xi$.  As a result, we have
\begin{eqnarray*}
\| IV^{**} \|_{L^\infty} & \lesssim & \int_0^{2\pi} \int_0^\pi \int_0^\infty \chi(r) \frac{e^{-ir|z_0|}}{|z_0|} e^{itr^2} \frac{e^{ir|z_1|}}{|z_1|} \\
& \times & f_0 (y-z_0,r \sin (\theta) \cos (\phi), r \sin (\theta) \sin (\phi), r \cos (\theta), r)  \\
& \times & \tilde{f}_0 (x-z_1,r \sin (\theta) \cos (\phi), r \sin (\theta) \sin (\phi), r \cos (\theta), r) \\
& \times & r^2 \sin (\theta) dr d\theta d\phi.
\end{eqnarray*}
Hence, we can first extend the interval of integration in $r$ to $\reals$, then immediately integrate by parts in $r$ to gain a factor of $\frac{1}{t}$.  We once again apply the linear Schr\"odinger dispersive estimate to get 
\begin{eqnarray*}
\| IV^{**} \|_{L^\infty} \lesssim \frac{1}{t^{\frac{3}{2}}} \| f \|_{L^1}.
\end{eqnarray*}

Combining the above results, we have
\begin{eqnarray*}
\| \text{\eqref{eqn:stph1}} \|_{L^\infty} \leq t^{-\frac{d}{2}} \| \psi \|_{L^1}
\end{eqnarray*}
and
\begin{eqnarray*}
\| \text{\eqref{eqn:stph2}} \|_{L^\infty} \leq t^{-\frac{d}{2}} \| \psi \|_{L^1}.
\end{eqnarray*}
Hence, the theorem follows.

\end{proof}

We now proceed to prove Theorem \ref{thm:wlindec}.

\begin{proof}[Proof of \ref{thm:wlindec}]

We proceed similarly to the proof of Theorem \ref{thm:tdec}, except now we must bound the following:
\begin{eqnarray*}
I & = & |e^{-c|x|} \int_\xi \chi(\xi)  \phi^{-1}_\xi (x) e^{ \pm it (\xi^2 + \lambda^2)} \int_y \phi_{\xi} (y) \psi (y) dy d\xi|, \\
II & = & |e^{-c|x|} \int_\xi [1-\chi(\xi)]  \phi^{-1}_\xi (x) e^{ \pm it (\xi^2 + \lambda^2)} \int_y \phi_{\xi} (y) \psi (y) dy d\xi|.
\end{eqnarray*}

For $II$, we look at oscillatory integrals of the form
\begin{eqnarray*}
\int_\xi [1-\chi(\xi)] (e^{i x \cdot \xi} + g^{-1}_\xi (x)) e^{\pm it (\xi^2 + \lambda^2)} \int_y (e^{i y \cdot \xi} + g_\xi (y)) \psi (y) dy d\xi.
\end{eqnarray*}
Motivated by the principle of stationary phase in \cite{EvZw}, define the operator
\begin{eqnarray*}
L = \frac{\langle \xi, \partial_\xi \rangle}{\pm 2 |\xi|^2 it}.
\end{eqnarray*}
Considering the phase function as $\phi(\xi) = t \xi^2$, it is clear
\begin{eqnarray*}
L e^{i \phi(\xi)} = e^{i \phi(\xi)}.
\end{eqnarray*}
Then, let us take $L^M e^{i \phi(\xi)}$ in $II$ and integrate by parts.  Note, on the support of $1 - \chi(\xi)$, $\xi/|\xi|^2$  is a bounded multiplier.  A calculation shows
\begin{eqnarray*} 
|\partial_\xi g_\xi| & \leq & |\int \frac{\xi}{|\xi|} e^{i |x-y| |\xi|} e^{i y \xi} f_0 (y,\xi) dy | \\
& + &|\int \frac{\xi}{\sqrt{\xi^2+\lambda^2}} e^{- |x-y| \sqrt{\xi^2+\lambda^2}} e^{i y \xi} f_0 (y,\xi) dy | \\
& + & | \int \frac{e^{i |x-y| |\xi|} - e^{- |x-y| \sqrt{\xi^2+\lambda^2}}}{|x-y|} e^{i y \cdot \xi} y f_0 (y,\xi) dy | \\
& + & | \int \frac{e^{i |x-y| |\xi|} - e^{- |x-y| \sqrt{\xi^2+\lambda^2}}}{|x-y|} e^{i y \cdot \xi} \partial_{\xi} f_0 (y,\xi) dy| \\
& \lesssim &  \int (\langle x \rangle + \langle y \rangle) |f_0 (y,\xi)| dy + \int | \partial_{\xi} f_0 (y,\xi) | dy.
\end{eqnarray*}
Using the regularity of $f_0$ in $y$ and $\xi_0$ and continuing this calculation for $\partial^M_\xi g_\xi$,  by applying the decay results from similar terms in \ref{thm:tdec} we see
\begin{eqnarray*}
\| e^{-c|x|} \int_\xi (1-\chi(\xi)) \tilde{\phi}_{\xi}^{-1} (x) e^{itW} \int_y \tilde{\phi}_{\xi} (y) \vec{\psi} (y) dy d\xi \|_{L^\infty} \lesssim t^{ - \frac{d}{2}-M} \| \psi \|_{L^{1,M}}.
\end{eqnarray*}

Now, for $I$, we need to bound
\begin{eqnarray*}
\int_\xi [\chi(\xi)] (e^{i x \cdot \xi} + g^{-1}_\xi (x)) e^{it (\xi^2 + \lambda^2)} (e^{i y \cdot \xi} + g_\xi (y))  d\xi.
\end{eqnarray*}
It is here our moments conditions become necessary.  We wish to proceed similarly to case $II$, but now $\frac{\xi}{|\xi|^2}$ is a singular multiplier.  In fact, note that after integration by parts $M$ times, the leading order operator will be on the order of $|\xi|^{-2M}$.
As a result, we arrive at the $2M$ moments conditions in \eqref{eqn:mom1}.
We have a gain in time decay using integration by parts in $L$, and since
\begin{eqnarray*}
L_j \vec{g} (0) = 0,
\end{eqnarray*}
for $j = 1,\dots,2M$, there still no singularities near $\xi = 0$ where
\begin{eqnarray}
\vec{g} (\xi) = \chi(\xi) \tilde{\phi}_{\xi}^{-1} (x) \int_y \tilde{\phi}_{\xi} (y) \vec{f} (y) dy,
\end{eqnarray}
and $L_j$ is the order $2j$ differential operator resulting from the stationary phase-like arguments.  Now, again we can apply the applicable results on oscillatory integrals of the terms $II^{**}$, $III^{**}$ and $IV^{**}$ from the proof of Theorem \ref{thm:tdec} with new functions $f^M_0$
\begin{eqnarray*}
f^M_0 (x-z,y,\xi,|\xi|) = |x|^{M_1} y^{M_2} m_{M_1,M_2} (\xi,|\xi|) L^{M_3} f_0(x-z,\xi,|\xi|)
\end{eqnarray*}
defined on the support of $\chi (\xi)$ where $M_1 + M_2 + M_3 = 2M$.  Using the moments conditions and the weighted integrability of $f$, the argument proceeds precisely as that near $\xi = 0$ for the unweighted time decay case. Hence, under our assumptions we have
\begin{eqnarray*}
\| e^{-c|x|} \int_\xi \chi(\xi) \tilde{\phi}_{\xi}^{-1} (x) e^{itW} \int_y \tilde{\phi}_{\xi} (y) \vec{f} (y) dy d\xi \|_{L^\infty} \lesssim t^{-\frac{d}{2} - M}.
\end{eqnarray*}

\end{proof}

\begin{rem}
In turn, \eqref{eqn:mom1} becomes our moments condition for the function space $\mathcal{P}^A_2$ as defined by
\begin{eqnarray*}
\mathcal{P}^A_2 = \{ \phi \in P_c \mathcal{H} | \| \phi \|_{H^A} < \infty, \ \| |x|^A \phi \|_{L^2} < \infty,  \ \text{condition \ref{eqn:mom1} is satisfied for $j \leq A$} \},
\end{eqnarray*}
with norm
\begin{eqnarray*}
\| \phi \|_{\mathcal{P}^A_2} = \left( \| \phi \|_{H^A}^2 + \| |x|^A \phi \|_{L^2}^2 \right)^{\frac{1}{2}}.
\end{eqnarray*}
These function spaces will be used in \cite{Mnl} in order to find stable perturbations of minimal mass solitons.
\end{rem}

\section{Dispersive Estimates}
\label{lin:disp}

From \cite{W1} or \cite{Mspec}, we have $H^1 = M \otimes S$ where $M$ is $2d+4$ dimensional set of functions that span the $4$th order generalized null space at $0$ and $S$ is the continuous spectrum.  

Since $M$ is spanned by functions with exponential decay, we have for $\phi \in M$
\begin{eqnarray*}
\| e^{it \mathcal{H}} \phi \|_{H^1} \leq C (1 + |t|^3) \int e^{-c|x|} |\phi(x)| dx,
\end{eqnarray*}
where $c$ is determined by the exponential decay of all functions in $M$.  

Now, from \cite{ES1} and \ref{lin:rep} we have for $\phi \in S$,
\begin{eqnarray}
\label{eqn6:1}
\| e^{it \mathcal{H}} \phi \|_{L^2} \leq C \| \phi \|_{L^2}.
\end{eqnarray}

\begin{lem}
Given Equation \eqref{eqn6:1}, we have
\begin{eqnarray*}
\| e^{it \mathcal{H}} \phi \|_{H^1} \leq C \| \phi \|_{H^1}.
\end{eqnarray*}
\end{lem}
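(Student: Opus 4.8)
The plan is to upgrade the $L^2$ bound \eqref{eqn6:1} to an $H^1$ bound by commuting the propagator past a first order elliptic operator manufactured from $\mathcal{H}$ itself. I would read the statement as applying to $\phi\in S$, since on $M$ the flow grows polynomially in $t$ (as recorded just above), so $S$ is the only case at issue.

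First I would record, from the diagonalization of Section~\ref{lin:rep}, that $e^{it\mathcal{H}}P_c=Q^{-1}e^{itW}Q$ and $|\mathcal{H}|^{1/2}P_c=Q^{-1}(\lambda^2+|\xi|^2)^{1/2}I_2\,Q$, with $W=\operatorname{diag}(\lambda^2+|\xi|^2,\,-(\lambda^2+|\xi|^2))$ and $Q,Q^{-1}$ bounded on $L^2$. Writing $Q=T\mathcal{F}P$, $Q^{-1}=P^{-1}\mathcal{F}^{*}T^{-1}$ with $P=\operatorname{diag}(L_{-}^{-1/2},L_{-}^{1/2})$ and $T$ the matrix multiplier of Section~\ref{lin:rep}, and using that the scalar $(\lambda^2+|\xi|^2)^{1/2}$ commutes with $T^{\pm1}$, one finds $|\mathcal{H}|^{1/2}P_c=\operatorname{diag}\big(L_{-}^{1/2}AL_{-}^{-1/2},\,L_{-}^{-1/2}AL_{-}^{1/2}\big)$, where $A=\mathcal{F}^{*}(\lambda^2+|\xi|^2)^{1/2}\mathcal{F}=\big(L_{-}^{1/2}L_{+}L_{-}^{1/2}\big)^{1/4}$ on the continuous spectrum. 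By Theorem~\ref{thm:rep1} the factors $L_{-}^{\pm1/2}$ lie in $\operatorname{Op}S^{\pm1}$, and by the pseudodifferential functional calculus for positive elliptic operators quoted above $A$ is a first order elliptic pseudodifferential operator with principal symbol $(\lambda^2+|\xi|^2)^{1/2}$, a property preserved under conjugation by $L_{-}^{\pm1/2}$. Hence $|\mathcal{H}|^{1/2}P_c$ is a matrix of first order elliptic pseudodifferential operators with principal symbol comparable to $\langle\xi\rangle$, and since $|\mathcal{H}|\geq\lambda^2>0$ on $S$ an elliptic parametrix yields the norm equivalence $\|\psi\|_{H^1}\sim\big\|\,|\mathcal{H}|^{1/2}\psi\,\big\|_{L^2}$ for $\psi\in S$.

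With this in hand the lemma is immediate. On $S$, both $e^{it\mathcal{H}}$ and $|\mathcal{H}|^{1/2}$ arise as $Q^{-1}(\cdot)Q$ applied to the commuting multiplication operators $e^{itW}$ and $(\lambda^2+|\xi|^2)^{1/2}I_2$, so they commute and $e^{it\mathcal{H}}$ preserves $S$; therefore, for $\phi\in S$,
\begin{eqnarray*}
\|e^{it\mathcal{H}}\phi\|_{H^1} & \lesssim & \big\|\,|\mathcal{H}|^{1/2}e^{it\mathcal{H}}\phi\,\big\|_{L^2}\ =\ \big\|\,e^{it\mathcal{H}}\,|\mathcal{H}|^{1/2}\phi\,\big\|_{L^2} \\
& \lesssim & \big\|\,|\mathcal{H}|^{1/2}\phi\,\big\|_{L^2}\ \lesssim\ \|\phi\|_{H^1},
\end{eqnarray*}
where the outer steps are the norm equivalence just established and the middle inequality is \eqref{eqn6:1} applied to $|\mathcal{H}|^{1/2}\phi\in S$.

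I expect the main obstacle to be the norm equivalence, i.e. verifying that $|\mathcal{H}|^{1/2}P_c$ genuinely is a first order elliptic pseudodifferential operator with principal symbol comparable to $\langle\xi\rangle$: this rests on the pseudodifferential calculus of Theorem~\ref{thm:rep1} (in particular that $\big(L_{-}^{1/2}L_{+}L_{-}^{1/2}\big)^{1/4}$ is a first order elliptic operator with the expected principal symbol) together with the standard elliptic parametrix construction, with some care needed because of the continuous spectral projection $P_c$ and the non-self-adjointness of $\mathcal{H}$. Everything else is bookkeeping with the $L^2$-bounded conjugation $Q$.
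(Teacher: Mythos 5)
Your proposal is correct but takes a genuinely different and considerably heavier route than the paper. The paper's proof is more elementary: it commutes $\mathcal{H}$ itself past the propagator. Since $\mathcal{H}$ is a second-order differential operator with a smooth, bounded zeroth-order perturbation, one has $\|\mathcal{H}u\|_{L^2}+\|u\|_{L^2}\sim\|u\|_{H^2}$ by elementary elliptic regularity, no pseudodifferential calculus required. Writing $\|e^{it\mathcal{H}}\phi\|_{H^2}\lesssim\|\mathcal{H}e^{it\mathcal{H}}\phi\|_{L^2}+\|e^{it\mathcal{H}}\phi\|_{L^2}=\|e^{it\mathcal{H}}\mathcal{H}\phi\|_{L^2}+\|e^{it\mathcal{H}}\phi\|_{L^2}$ and then applying \eqref{eqn6:1} to both terms gives the $H^2\to H^2$ bound, and the $H^1$ statement follows by complex interpolation between the $L^2$ and $H^2$ bounds.

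Your version instead manufactures the fractional power $|\mathcal{H}|^{1/2}P_c=Q^{-1}(\lambda^2+|\xi|^2)^{1/2}Q$ and establishes the first-order norm equivalence $\|\psi\|_{H^1}\sim\||\mathcal{H}|^{1/2}\psi\|_{L^2}$ on $S$ via an elliptic parametrix, which lets you hit $H^1$ directly without interpolation. This is a valid plan and it correctly identifies the commutation argument at the heart of the matter, but it is strictly more expensive: you must invoke the full machinery of Section~\ref{lin:rep} (Theorem~\ref{thm:rep1} on fractional powers of $L_{-}$, the $L^2$-boundedness of $Q$ and $Q^{-1}$, and a separate argument that $\bigl(L_{-}^{1/2}L_{+}L_{-}^{1/2}\bigr)^{1/4}$ conjugated by $L_{-}^{\pm1/2}$ is a genuine first-order elliptic PDO with principal symbol comparable to $\langle\xi\rangle$), none of which the paper's proof needs. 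What you buy is the avoidance of the interpolation step, which has its own mild subtlety (interpolating on the closed subspace $S$ rather than all of $H^s$), but the tradeoff clearly favors the paper's shorter route here: it uses only that $\mathcal{H}$ is a second-order differential operator commuting with its own flow. Your identification of the correct scope ($\phi\in S$, since on $M$ the flow grows polynomially) matches the paper.
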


\begin{proof}
For $\phi \in S$, we have
\begin{eqnarray*}
\| e^{it \mathcal{H}} \phi \|_{H^2} & \leq & \| \mathcal{H} e^{it \mathcal{H}} \phi \|_{L^2} + C \| e^{it \mathcal{H}} \phi \|_{L^2} \\
& \leq &  \| e^{it \mathcal{H}}  \mathcal{H} \phi \|_{L^2} + C \| e^{it \mathcal{H}} \phi \|_{L^2} \\
& \leq & \| \mathcal{H} \phi \|_{L^2} + C \| e^{it \mathcal{H}} \phi \|_{L^2} \\
& \leq & \| \phi \|_{H^2} + C \| \phi \|_{L^2} \\
& \leq & C \| \phi \|_{H^2}.  
\end{eqnarray*}
Hence, the result follows from interpolation.
\end{proof}

In order to push through the contraction argument, we need various dispersive estimates from \cite{BW}.  We present the proofs here.

\begin{thm}[Erdogan-Schlag,Bourgain]
\label{thm:disp1}

Let $P_c$ and $P_d$ be projections onto the continuous and discrete spectrum of $\mathcal{H}$ respectively.  Then,
\begin{eqnarray*}
(i) \ \| e^{it \mathcal{H}} P_c \phi \|_{H^1} & \leq & C \| \phi \|_{H^1} \\
(ii) \ \| e^{it \mathcal{H}} (P_c\phi) \|_{H^s} & \leq & C \| \phi \|_{H^s} \\
(iii) \ \| e^{it \mathcal{H}} (P_d\phi) \|_{H^s} & \leq & C (1 + |t|^3) \int e^{-c|x|} |\phi(x)| dx \\
(iv) \ \| |x|^\alpha e^{it \mathcal{H}} (P_c \phi) \|_{L^2} & \leq & C( \| |x|^\alpha \phi \|_{L^2} + (1+|t|^\alpha) \| \phi \|_{H^\alpha}) \\
(v) \ \| |x|^\alpha e^{it \mathcal{H}} (P_d \phi) \|_{L^2} & \leq & C (1+|t|^3) \int |\phi| e^{-c|x|} dx.
\end{eqnarray*}
\end{thm}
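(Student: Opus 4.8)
The plan is to derive the continuous-spectrum bounds (i), (ii), (iv) from the distorted Fourier calculus of Sections \ref{lin:rep}--\ref{lin:tdecay}, and the discrete-spectrum bounds (iii), (v) from the nilpotency of $\mathcal{H}$ on its finite-dimensional generalized null space. For (i), note that $P_c \phi \in S$, so this is exactly the Lemma preceding the theorem: I would combine the $L^2$-boundedness of $e^{it\mathcal{H}} P_c$ (Corollary \ref{lin:cor}), the commutation $\mathcal{H} e^{it\mathcal{H}} = e^{it\mathcal{H}} \mathcal{H}$, and the elliptic estimate $\| \psi \|_{H^2} \lesssim \| \mathcal{H}\psi \|_{L^2} + \| \psi \|_{L^2}$ (valid since $\mathcal{H}$ is a second-order matrix operator with Schwartz potentials) to get $H^2$-boundedness, then interpolate with $L^2$ to reach $H^1$. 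For (ii) I would iterate this:
\begin{eqnarray*}
\| e^{it\mathcal{H}} P_c \phi \|_{H^{2k}} & \lesssim & \| \mathcal{H}^k e^{it\mathcal{H}} P_c \phi \|_{L^2} + \| e^{it\mathcal{H}} P_c \phi \|_{L^2} \\
& = & \| e^{it\mathcal{H}} \mathcal{H}^k P_c \phi \|_{L^2} + \| e^{it\mathcal{H}} P_c \phi \|_{L^2} \\
& \lesssim & \| \mathcal{H}^k P_c \phi \|_{L^2} + \| P_c \phi \|_{L^2} \ \lesssim \ \| \phi \|_{H^{2k}},
\end{eqnarray*}
using $\| \psi \|_{H^{2k}} \lesssim \| \mathcal{H}^k \psi \|_{L^2} + \| \psi \|_{L^2}$; complex interpolation between these integer levels and $L^2$ then gives (ii) for all $s \geq 0$.

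For (iii) and (v), recall from the start of Section \ref{lin:disp} that $P_d$ projects onto the $(2d+4)$-dimensional space $M$ spanning the generalized null space of $\mathcal{H}$ at $0$, on which $\mathcal{H}$ is nilpotent of order at most $4$. Hence $e^{it\mathcal{H}} P_d \phi = \sum_{j=0}^{3} \frac{(it)^j}{j!}\, \mathcal{H}^j P_d \phi$ is polynomial of degree $\leq 3$ in $t$; all norms on the finite-dimensional space $M$ are equivalent, its elements decay exponentially at a fixed rate $c>0$, and, since $P_d$ has a finite-rank kernel built from the exponentially decaying (generalized) eigenfunctions of $\mathcal{H}$ and $\mathcal{H}^*$, one has $\| P_d \phi \|_X \lesssim \int e^{-c|x|}|\phi(x)|\,dx$ for any of the relevant norms $X$ (in particular $H^s$ and $\| |x|^\alpha \cdot \|_{L^2}$). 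Combining the degree-$3$ growth with this bound gives (iii); and (v) follows at once, since multiplying an element of $M$ by $|x|^\alpha$ leaves an $L^2$ function whose norm is controlled by the same right-hand side.

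Part (iv) is the main obstacle. I would start from the oscillatory-integral representation $e^{it\mathcal{H}} P_c \phi = \mathcal{G}^{-1} e^{\pm it(\lambda^2+|\xi|^2)} \mathcal{G} \phi$ of Section \ref{lin:tdecay}, in which, by Theorems \ref{thm:dfbu} and \ref{thm:rep}, the kernels of $\mathcal{G}^{\pm 1}$ are $e^{\pm ix\xi}$ times a symbol of order $2$ in $\xi$ with Schwartz decay in $x$ (the low-frequency behaviour being handled as in the proof of Theorem \ref{thm:tdec}, by splitting with the cutoff $\chi$ and using the stationary-phase operator $L$ on $\supp\chi$). Writing the composite kernel out, multiplication by $|x|^\alpha$ amounts, modulo lower-order contributions from $\xi$-derivatives of the symbol, to applying $\partial_\xi^\alpha$ under the integral; by Leibniz each $\partial_\xi$ either hits the free phase $e^{\pm it(\lambda^2+|\xi|^2)}$, producing a factor $O(t\langle\xi\rangle)$, or hits $\mathcal{G}\phi$, or hits the symbol (harmless by the symbol bounds). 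The terms in which all $\alpha$ derivatives hit the phase contribute $\lesssim (1+|t|^\alpha)\, \| |\xi|^\alpha \mathcal{G}\phi \|_{L^2} \lesssim (1+|t|^\alpha)\,\| \phi \|_{H^\alpha}$ once the $L^2$-boundedness of $\mathcal{G}^{-1}$ and the order-$2$ symbol bounds are used to absorb the elliptic weight into $H^\alpha$; the terms in which the derivatives hit $\mathcal{G}\phi$, after transferring $\partial_\xi$ back to multiplication by $x$ on the physical side, contribute $\lesssim \| |x|^\alpha \phi \|_{L^2}$; the mixed terms follow by interpolation between these two. Summing the finitely many terms yields (iv). The delicate point, as in Theorems \ref{thm:tdec} and \ref{thm:wlindec}, is the non-smoothness of $\partial_\xi^\alpha|\xi|$ near $\xi=0$; but since here one needs only $L^2$ control, not $L^\infty$ decay in $t$, the singular-multiplier terms remain square-integrable near the origin and no moment conditions are needed, so this step is milder than the corresponding analysis in the proof of Theorem \ref{thm:wlindec}. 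Non-integer $\alpha$ is then obtained by interpolating the integer cases.
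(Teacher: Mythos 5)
Parts (i), (ii), (iii), (v) of your sketch follow essentially the same route as the paper: $L^2$-boundedness of $e^{it\mathcal{H}}P_c$ plus the commutation $\mathcal{H}e^{it\mathcal{H}}=e^{it\mathcal{H}}\mathcal{H}$ and the elliptic estimate $\|\psi\|_{H^2}\lesssim\|\mathcal{H}\psi\|_{L^2}+\|\psi\|_{L^2}$, iterated and interpolated for (i), (ii); and nilpotency of $\mathcal{H}$ on the finite-dimensional generalized null space together with exponential decay of the (generalized) eigenfunctions for (iii), (v). The paper is terser on (iii)/(v) but the content is the same.

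For (iv), however, you take a genuinely different route. The paper works entirely on the physical side: setting $v=e^{it\mathcal{H}}P_c\phi$ and using $iv_t-\mathcal{H}v=0$, it differentiates $\||x|^\alpha v\|_{L^2}^2$ in $t$, integrates by parts to turn the commutator $[\,\cdot\,,|x|^{2\alpha}]$ into $\int|x|^{2\alpha-1}|v||\nabla v|$, controls this by the weighted interpolation inequality $\||x|^{\alpha-\gamma}D^\gamma v\|_{L^2}\leq\||x|^\alpha v\|_{L^2}^{1-\gamma/\alpha}\|v\|_{H^\alpha}^{\gamma/\alpha}$ together with (ii), and closes with a Gronwall/absorption argument. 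You instead propose a Fourier-side argument in the distorted basis, converting $|x|^\alpha$ into $\xi$-derivatives of $\mathcal{G}^{\pm1}e^{\pm it(\lambda^2+|\xi|^2)}$ and using the symbol bounds of Theorem~\ref{thm:dfbu}. The idea is reasonable, but you understate the low-frequency difficulty: the symbol estimate~\eqref{eqn:symb} is only established for $|\xi_0|\geq M$, and near $\xi=0$ the kernel and the scattered part $g_\xi$ depend non-smoothly on $|\xi|$, with $\partial_\xi^\alpha|\xi|\sim|\xi|^{1-|\alpha|}$, which already fails to be square-integrable near the origin in $\mathbb{R}^3$ for $|\alpha|\geq 3$. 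So the assertion that the singular multipliers ``remain square-integrable near the origin'' without any moment conditions is unconvincing for larger $\alpha$; you would need a more careful low-frequency splitting or a derivative count that never piles all $\alpha$ derivatives on $|\xi|$. The paper's energy argument sidesteps this issue entirely, never differentiating in $\xi$ at all, which is precisely why it is the argument of choice here; its only cost is the weighted Gagliardo--Nirenberg inequality and the already-proved $H^\alpha$ bound (ii).
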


\begin{proof}
Estimate $(iii)$ follows from the discrete spectral decomposition into a $4$ dimensional generalized null space.  The exponential decay is apparent from the properties of the eigenfunctions.  Estimate $(v)$ follows similarly. 

For $\phi \in \sigma_{ac} (\mathcal{H})$, we have from Section \ref{lin:tdecay} or \cite{ES1} that
\begin{eqnarray*}
\| e^{it \mathcal{H}} P_c \phi \|_{L^2} & \leq & C \| \phi \|_{L^2}.
\end{eqnarray*}
For $\phi \in \sigma_{ac} (\mathcal{H})$, we have
\begin{eqnarray*}
\| e^{it \mathcal{H}} \phi \|_{H^2} & \leq & \| \mathcal{H} e^{it \mathcal{H}} \phi \|_{L^2} + C \| e^{it \mathcal{H}} \phi \|_{L^2} \\
& \leq &  \| e^{it \mathcal{H}}  \mathcal{H} \phi \|_{L^2} + C \| e^{it \mathcal{H}} \phi \|_{L^2} \\
& \leq & \| \mathcal{H} \phi \|_{L^2} + C \| e^{it \mathcal{H}} \phi \|_{L^2} \\
& \leq & \| \phi \|_{H^2} + C \| \phi \|_{L^2} \\
& \leq & C \| \phi \|_{H^2}.  
\end{eqnarray*}
This gives $(i)$.  A similar argument shows
\begin{eqnarray*}
\| e^{it \mathcal{H}} \phi \|_{H^{2s+1}} \lesssim \| \phi \|_{H^{2s+1}} + \| e^{it \mathcal{H}} \phi \|_{H^{2s-1}}.
\end{eqnarray*}
Thus, by induction, we have $(ii)$ for all positive integers $s$ and hence by interpolation all $s>0$.  

Let $\phi \in \sigma_{ac} (\mathcal{H})$ and $u = e^{it \mathcal{H}} \phi$.  Then, since
\begin{eqnarray*}
i v_t - \mathcal{H} v = 0,
\end{eqnarray*}
then
\begin{eqnarray*}
\frac{d}{dt} \int |x|^{2 \alpha|} | v(x,t)|^2 dx & = & 2 \Re \langle |x|^{2 \alpha} v, v_t \rangle \\
& = & 2 \Im \langle |x|^{2 \alpha} v, \mathcal{H} v \rangle \\
& = & 2 \Im \langle |x|^{2 \alpha} v, \Delta v \rangle + O \left( \int |v|^2 e^{-c|x|} \right) \\
& \lesssim & \int |x|^{2 \alpha -1} |v| |\nabla v| dx + \| v \|_2^2.
\end{eqnarray*}
Using the following interpolation inequality
\begin{eqnarray*}
\| |x|^{\alpha - \gamma} | D^{\gamma} v| \|_{L^2} \leq \| |x|^\alpha v \|_{L^2}^{1-\frac{\gamma}{\alpha}} \| v \|_{H^\alpha}^{\frac{\gamma}{\alpha}},
\end{eqnarray*}
we have
\begin{eqnarray*}
\int |x|^{2 \alpha -1} |v| |\nabla v| dx & \leq & \| |x|^\alpha |v| \|_{L^2} \| |x|^{\alpha-1} | \nabla v | \|_{L^2} \\
& \leq & \| |x|^\alpha |v| \|_{L^2}^{2-\frac{1}{\alpha}} \| v \|_{H^\alpha}^{\frac{1}{\alpha}}.
\end{eqnarray*}
Hence, using $(ii)$
\begin{eqnarray*}
\frac{d}{dt} [ \| |x|^\alpha |v(t)| \|^2_2 ] \lesssim \| |x|^\alpha |v| \|_{L^2}^{2-\frac{1}{\alpha}} \| \phi \|_{H^\alpha}^{\frac{1}{\alpha}} + \| v \|_{L^2}^2.
\end{eqnarray*}
Integrating, we have
\begin{eqnarray*}
\| |x|^\alpha |v| \|^2_{L^2 L^\infty ([0,t])} & \lesssim & \| |x|^\alpha |\phi| \|^2_2 + \int_0^t [ \| |x|^\alpha v (s) \|_{L^2}^{2-\frac{1}{\alpha}} \| \phi \|_{H^\alpha}^{\frac{1}{\alpha}} + \| v(s) \|_{L^2}^2 ]ds \\
& \lesssim & \| |x|^\alpha |\phi| \|^2_2 + \| |x|^\alpha |v| \|^{2-\frac{1}{\alpha}}_{L^2 L^\infty ([0,t])} \int_0^t [ \| \phi \|_{H^\alpha}^{\frac{1}{\alpha}} + \| \phi \|_{L^2}^2] ds \\
& \lesssim & \| |x|^\alpha |\phi| \|^2_2 + \epsilon \| |x|^\alpha |v| \|^2_{L^2 L^\infty ([0,t])} + C(\epsilon) (t^{2 \alpha} + t) \| |x|^\alpha |\phi| \|^2_2 .
\end{eqnarray*}
Hence, estimate $(iv)$ follows.
\end{proof}

\section{Strichartz Estimates}
\label{lin:strich}
From the above time decay, we can also prove the standard space-time Strichartz estimates for $e^{i \mathcal{H} t} \phi$ where $\phi \in P_c \mathcal{H}$.  We review the standard methods here as seen in \cite{SS}.  From henceforward, let us assume that we work on the subspace of functions contained in $P_c \mathcal{H}$.

\begin{thm}
\label{thm:strich1}
For $p$ and $p'$ such that $\frac{1}{p} + \frac{1}{p'} = 1$, with $2 \leq p \leq \infty$, and $t \neq 0$, the transformation $e^{i \mathcal{H} t}$ maps continuously $L^{p'} ( \R^d)$ into $L^{p} ( \R^d)$ and
\begin{eqnarray}
\label{eqn:strich1}
\| e^{i \mathcal{H} t} \phi \|_{L^p} \lesssim \frac{1}{|t|^{d(\frac{1}{2}-\frac{1}{p})}} \| \phi \|_{L^{p'}}.
\end{eqnarray}
\end{thm}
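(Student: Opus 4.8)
The plan is to deduce \eqref{eqn:strich1} purely by complex interpolation between the two endpoint bounds already in hand, at each fixed $t\neq 0$. At the $L^2$ endpoint, Corollary \ref{lin:cor} (equivalently \eqref{eqn6:1}) gives
\begin{eqnarray*}
\| e^{i\mathcal{H} t} \phi \|_{L^2} \lesssim \| \phi \|_{L^2},
\end{eqnarray*}
with no time decay, consistent with $d(\tfrac12-\tfrac12)=0$. At the $L^1\to L^\infty$ endpoint, Theorem \ref{thm:tdec} gives
\begin{eqnarray*}
\| e^{it\mathcal{H}} P_c \phi \|_{L^\infty} \lesssim |t|^{-\frac d2}\| \phi \|_{L^1},
\end{eqnarray*}
matching $d(\tfrac12-0)=\tfrac d2$. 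Since by the standing assumption we are working on the continuous subspace $P_c\mathcal{H}$, which $e^{it\mathcal{H}}$ preserves, both bounds apply to the single operator family $\{e^{it\mathcal{H}}\}$ restricted to that subspace.

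First I would fix $t\neq 0$ and regard $e^{it\mathcal{H}}$ (on $P_c\mathcal{H}$) as a fixed linear map, bounded $L^2\to L^2$ with norm $O(1)$ and bounded $L^1\to L^\infty$ with norm $O(|t|^{-d/2})$. Riesz--Thorin interpolation then produces, for $\theta\in[0,1]$, a bounded map $L^{p'_\theta}\to L^{p_\theta}$ with
\begin{eqnarray*}
\frac{1}{p_\theta}=\frac{1-\theta}{2},\qquad \frac{1}{p'_\theta}=\frac{1-\theta}{2}+\theta,
\end{eqnarray*}
so that $\tfrac1{p_\theta}+\tfrac1{p'_\theta}=1$ and $p_\theta$ sweeps $[2,\infty]$ as $\theta$ sweeps $[0,1]$, with operator norm controlled by
\begin{eqnarray*}
C^{1-\theta}\big(C|t|^{-\frac d2}\big)^{\theta}=C\,|t|^{-\frac{d\theta}{2}}=C\,|t|^{-d\left(\frac12-\frac1{p_\theta}\right)},
\end{eqnarray*}
using $\theta=1-\tfrac2p$ and $\tfrac{d\theta}{2}=\tfrac d2-\tfrac dp=d(\tfrac12-\tfrac1p)$. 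Relabeling $p_\theta$ as $p$ yields exactly \eqref{eqn:strich1}.

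The one point needing care is that the $L^\infty$ bound of Theorem \ref{thm:tdec} was established for Schwartz data, so I would first note that $\mathcal{S}\cap P_c\mathcal{H}$ is dense in $L^{p'}\cap P_c\mathcal{H}$ in the relevant norms and that $e^{it\mathcal{H}}$ acts continuously there, so that the interpolation inequality extends to all $\phi$ in the continuous subspace by a routine limiting argument; the discrete component is separately governed by the exponentially localized, polynomially $t$-growing estimates of the preceding section and is excluded here by the hypothesis $\phi\in P_c\mathcal{H}$. I do not anticipate a genuine obstacle: the entire substance of the theorem already resides in Theorem \ref{thm:tdec}, and what remains is the standard complex-interpolation bookkeeping together with this density reduction.
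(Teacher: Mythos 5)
Your proposal is correct and matches the paper's approach: the paper simply cites the interpolation result of Berg--Lofstrom, which is exactly the Riesz--Thorin complex interpolation between the $L^2\to L^2$ bound of Corollary~\ref{lin:cor} and the $L^1\to L^\infty$ bound of Theorem~\ref{thm:tdec} that you carry out. Your density remark is a reasonable extra precaution that the paper leaves implicit.
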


\begin{proof}
This result follows from the interpolation result presented in \cite{BL}.
\end{proof}

\begin{defn}
The pair $(q,r)$ of real numbers is called admissible if $\frac{2}{q} = \frac{d}{2} - \frac{d}{r}$ with $2 \leq r < \frac{2d}{d-2}$ when $d > 2$, or $2 \leq r \leq \infty$ when $d = 1$ or $d = 2$.
\end{defn}

The following result proving Strichartz estimates is from \cite{S1}.

\begin{thm}[Schlag]
\label{thm:strich2}
For every $\phi \in L^2$ and every admissible pair $(q,r)$, the function
$t \to e^{i \mathcal{H} t} \phi$ belongs to $L^q (\reals, L^r (\reals^d)) \cap C ( \reals, L^2 (\reals^d))$, and there exists a constant $C$ depending only on $q$ such that
\begin{eqnarray}
\label{eqn:strich2}
\| e^{i \mathcal{H} t} \phi \|_{L^q (\reals, L^r (\reals^d))} \leq C \| \phi \|_{L^2}.
\end{eqnarray}
\end{thm}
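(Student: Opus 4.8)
The plan is to derive \eqref{eqn:strich2} from the standard $TT^*$ argument, feeding in as the only analytic inputs the two facts already established for the continuous‑spectrum propagator $U(t) \defeq e^{it\mathcal{H}}P_c$: the uniform energy bound $\|U(t)\|_{L^2\to L^2}\lesssim 1$ from Corollary~\ref{lin:cor} (equivalently \eqref{eqn6:1}), and the dispersive bound $\|U(t)\|_{L^1\to L^\infty}\lesssim |t|^{-d/2}$ from Theorem~\ref{thm:tdec}. Interpolating these by Riesz--Thorin is exactly Theorem~\ref{thm:strich1}, the pointwise‑in‑time decay $\|U(t)\phi\|_{L^r}\lesssim |t|^{-d(\frac12-\frac1r)}\|\phi\|_{L^{r'}}$; the admissibility relation $\frac2q=\frac d2-\frac dr$ is precisely what will make the convolution exponents below close. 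The endpoint pair $(q,r)=(\infty,2)$ of the target estimate is nothing but Corollary~\ref{lin:cor}, so we may and do assume $r>2$, hence $q<\infty$ and $\frac2q<1$.

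Since $\mathcal{H}$ is not self‑adjoint, $U$ is not a unitary group: it is a uniformly bounded group on $\operatorname{Ran}P_c$ with $U(t)U(s)=U(t+s)$, and $U(t)^{*}=e^{-it\mathcal{H}^{*}}P_c^{*}$. The observation that makes the $TT^{*}$ argument go through is the conjugation identity $\mathcal{H}^{*}=\sigma\mathcal{H}\sigma$, where $\sigma$ is the involution interchanging the two components of a vector; this uses only that $L_{+}$ and $L_{-}$ are self‑adjoint. Because $\sigma$ is an isometry of every $L^{p}(\reals^{d},\C^{2})$, both the energy bound and the dispersive decay transfer verbatim to $e^{-it\mathcal{H}^{*}}$, hence to $U(t)^{*}$, and all of the distorted‑Fourier construction of Sections~\ref{lin:dfb}--\ref{lin:rep} applies to $\mathcal{H}^{*}$ unchanged. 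Consequently $U(t)U(s)^{*}$ is bounded on $L^{2}$ and maps $L^{1}\to L^{\infty}$ with norm $\lesssim |t-s|^{-d/2}$, so Riesz--Thorin again yields $\|U(t)U(s)^{*}\|_{L^{r'}\to L^{r}}\lesssim |t-s|^{-d(\frac12-\frac1r)}=|t-s|^{-2/q}$ for every admissible pair $(q,r)$.

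By $TT^{*}$ duality it then suffices to prove the dual bound $\bigl\|\int_{\reals}U(t)U(s)^{*}F(s)\,ds\bigr\|_{L^{q}_{t}L^{r}_{x}}\lesssim\|F\|_{L^{q'}_{t}L^{r'}_{x}}$. Moving the $L^{r}_{x}$ norm inside the $s$‑integral and using the decay just obtained bounds the left side by $\bigl\|\int_{\reals}|t-s|^{-2/q}\,\|F(s)\|_{L^{r'}_{x}}\,ds\bigr\|_{L^{q}_{t}}$, and the one‑dimensional Hardy--Littlewood--Sobolev (fractional integration) inequality controls this by $\|F\|_{L^{q'}_{t}L^{r'}_{x}}$, since $\frac1q=\frac1{q'}-(1-\frac2q)$ is an identity and $0<\frac2q<1$ — i.e.\ precisely on the non‑endpoint admissible range used in the statement. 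This establishes \eqref{eqn:strich2}. Membership in $C(\reals,L^{2})$ follows from the strong continuity of the group $e^{it\mathcal{H}}$ on $L^{2}$ together with the energy bound, and a density argument starting from $\phi\in\mathcal{S}$ (where every manipulation of Section~\ref{lin:rep} is licit) upgrades the a priori inequality to all $\phi\in L^{2}$.

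The step I expect to be the main obstacle is making $U(t)^{*}$ meaningful and dispersive: because $e^{it\mathcal{H}}$ is not unitary one cannot replace $U(s)^{*}$ by $U(-s)$, so the entire $TT^{*}$ scheme hinges on the conjugation $\mathcal{H}^{*}=\sigma\mathcal{H}\sigma$ and on the stability of Theorem~\ref{thm:tdec} and of the constructions of Sections~\ref{lin:dfb}--\ref{lin:rep} under passing to the adjoint. If one prefers to avoid the adjoint flow, one can instead prove only the homogeneous estimate directly and invoke the Christ--Kiselev lemma for any retarded inhomogeneous variant; either way the genuine endpoint $r=\frac{2d}{d-2}$ (where $q=2$ and the Hardy--Littlewood--Sobolev step degenerates) is — necessarily — excluded and would require the full Keel--Tao machinery rather than the elementary argument used here.
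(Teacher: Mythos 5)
Your key conjugation identity is correct: since $L_{\pm}$ are self-adjoint, $\mathcal{H}^{*}=\sigma\mathcal{H}\sigma$ with $\sigma=\begin{pmatrix}0&1\\1&0\end{pmatrix}$, and consequently $U(s)^{*}=\sigma\,U(-s)\,\sigma$ and $P_c^{*}=\sigma P_c\sigma$, so the energy and dispersive bounds do transfer to $U(s)^{*}$ individually. The gap is in the very next sentence. From $\|U(t)\|_{L^{2}\to L^{2}}\lesssim 1$, $\|U(s)^{*}\|_{L^{1}\to L^{\infty}}\lesssim |s|^{-d/2}$ you conclude that $U(t)U(s)^{*}:L^{1}\to L^{\infty}$ with norm $\lesssim |t-s|^{-d/2}$. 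That inference requires the kernel identity $U(t)U(s)^{*}=U(t-s)$ (or its restriction to the continuous subspace), which is precisely the step that is available only for a \emph{unitary} group. Here
\begin{eqnarray*}
U(t)\,U(s)^{*}\;=\;e^{it\mathcal H}P_c\,\sigma\,P_c\,e^{-is\mathcal H}\,\sigma,
\end{eqnarray*}
and because $\sigma$ does not commute with $\mathcal H$ (their commutator is essentially $\mathcal H-\mathcal H^{*}$) and $P_c\sigma P_c\ne\sigma P_c$, this does not collapse to a single propagator evaluated at $t-s$. Composing the separate bounds merely gives decay $|s|^{-d/2}$ (or $|t|^{-d/2}$), not $|t-s|^{-d/2}$, so the Hardy--Littlewood--Sobolev step in the $TT^{*}$ chain never engages. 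This failure is not cosmetic; it is exactly the obstruction the paper names when it says the ``duality argument when the operator $e^{i\mathcal H t}$ is unitary\dots for systems, this is not applicable.''

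The paper's route avoids the adjoint flow entirely. It writes $e^{-it\mathcal{H}}P_c$ by Duhamel against the constant-coefficient evolution $e^{-it\mathcal{H}_0}$ (which \emph{is} unitary and hence does satisfy the $TT^{*}$/fractional-integration estimates), then controls the inhomogeneous term $\int_0^t e^{-i(t-s)\mathcal H_0}\,V\,e^{-is\mathcal H}P_c\,ds$ by splitting $V=\tilde M\,\tilde M^{-1}V$ with a polynomial weight $\tilde M$, using the local-smoothing (Kato) estimate $\left\|\tilde M^{-1}Ve^{-is\mathcal H}P_c f\right\|_{L^2_sL^2_x}\lesssim\|f\|_{L^2}$ obtained from the limiting-absorption/resolvent bounds, and invoking the Christ--Kiselev lemma to pass from the full convolution to the retarded one. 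In other words, the Christ--Kiselev lemma is not an optional ``retarded inhomogeneous variant'' of your argument: it is the replacement for the $TT^{*}$ self-improvement step, which genuinely breaks for this non-self-adjoint $\mathcal H$. To repair your proposal you would have to either supply a separate proof that $U(t)U(s)^{*}$ decays like $|t-s|^{-d/2}$ (which I do not see how to do from Theorem~\ref{thm:tdec} alone) or switch to the paper's perturbative scheme.
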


\begin{proof}
Typically, one uses a duality argument when the operator $e^{i \mathcal{H} t}$ is unitary.  Namely, 
\begin{eqnarray*}
| \langle e^{i \mathcal{H} t} \phi, G \rangle_{L^2 (\reals^{d+1})} | \lesssim \| \phi \|_{L^2} \| G \|_{L^{q'} L^{r'}}.
\end{eqnarray*}
To this end, write
\begin{eqnarray*}
| \int_{-\infty}^\infty \langle e^{i \mathcal{H} t} \phi, G \rangle_{L^2 (\reals^{d})}  ds | & = & | \left\langle \phi, \int_{-\infty}^\infty e^{i \mathcal{H} -s} G(s) ds \right\rangle_{L^2 (\reals^d)} | \\
& \leq & \| \phi \|_{L^2 (\reals^d)} \left\| \int_{-\infty}^\infty e^{i \mathcal{H} -s} G(s) ds \right\|_{L^2 (\reals^d)},
\end{eqnarray*}
where
\begin{eqnarray*}
\left\| \int_{-\infty}^\infty e^{i \mathcal{H} -s} G(s) ds \right\|_{L^2 (\reals^d)}^2 & = & \left\langle \int_{-\infty}^\infty e^{i \mathcal{H} -s} G(s) ds , \int_{-\infty}^\infty e^{i \mathcal{H} -t} G(t) dt \right\rangle_{L^2 (\reals^d)} \\
& = & \left\langle  \int_{-\infty}^\infty G(t) dt, \int_{-\infty}^\infty e^{i \mathcal{H} t -s} G(s) ds \right\rangle_{L^2 (\reals^d)} \\
& \leq & \| G \|_{L^{q'} L^{r'}} \left\| \int_{-\infty}^\infty e^{i \mathcal{H} \cdot -s} G(s) ds \right\|_{L^q L^r}.
\end{eqnarray*}

Using Equation \ref{eqn:strich1}, we have
\begin{eqnarray*}
 \left\| \int_{-\infty}^\infty e^{i \mathcal{H} t -s} G(s) ds \right\|_{L^r}  & \leq & \int_{-\infty}^\infty \left\|  e^{i \mathcal{H} t -s} G(s)  \right\|_{L^r} ds \\
& \leq &  \int_{-\infty}^\infty \frac{1}{|t-s|^{d(\frac{1}{2}-\frac{1}{r})}} \left\| G(s)  \right\|_{L^{r'}} ds \\
& \leq & \int_{-\infty}^\infty \frac{1}{|t-s|^{\frac{2}{q}}}   \left\| G(s)  \right\|_{L^{r'}} ds.
\end{eqnarray*}
Hence, using the Hardy-Littlewood-Sobolev Theorem with $\gamma = -\frac{2}{q}$, 
\begin{eqnarray*}
 \left\| \int_{-\infty}^\infty e^{i \mathcal{H} t -s} G(s) ds \right\|_{L^q L^r} \lesssim \| G \|_{L^{q'} L^{r'}}.
\end{eqnarray*}

However, for systems, this is not applicable.  Hence, we must use the Christ-Kiselev Lemma \cite{ChKi}.

\begin{lem}
Let $X$, $Y$ be Banach spaces and let $K(t,s)$ be the kernel of the operator
\begin{eqnarray*}
K: L^p ([0,T];X) \to L^q ([0,T];Y).
\end{eqnarray*}
Denote by $\| K \|$ the operator norm of $K$.  Define the lower diagonal operator
\begin{eqnarray*}
\tilde{K} : L^p ([0,T];X) \to L^q ([0,T];Y)
\end{eqnarray*}
to be
\begin{eqnarray*}
\tilde{K} f(t) = \int_0^t K(t,s) f(s) ds.
\end{eqnarray*}
Then, the operator $\tilde{K}$ is bounded from $L^p ([0,T];X) \to L^q ([0,T];Y)$ and it norm $\| \tilde{K} \| \leq c \| K \|$, provided $p < q$.
\end{lem}

A perturbative approach originated by Kato is used.  Define
\begin{eqnarray*}
(S F) (t,x) = \int_0^t (e^{-i(t-s) \mathcal{H}} P_c F(s,\cdot)) (x) ds.
\end{eqnarray*}
Then,
\begin{eqnarray*}
\| S F \|_{L^\infty_t L^2_x} \lesssim \| F \|_{L^1_t L^2_x}.
\end{eqnarray*}
Using the fractional integration argument from the unitary case, we have
\begin{eqnarray*}
\| SF \|_{L^r_t L^p_s} \lesssim \| F \|_{L^{r'}_t L^{q'}_x},
\end{eqnarray*}
where $(r,p)$ is admissible.  By Duhamel, we have
\begin{eqnarray*}
e^{-it\mathcal{H}} P_c = e^{-it \mathcal{H}_0} P_c - i \int_0^t e^{-i (t-s) \mathcal{H}_0} V e^{-is \mathcal{H}} P_c ds.
\end{eqnarray*}
Set $V = \tilde{M} \tilde{M}^{-1} V$, where 
\begin{eqnarray*}
\tilde{M} = \left[ \begin{array}{cc}
\langle x \rangle^{-1-} & 0 \\
0 & \langle x \rangle^{-1-}
\end{array} \right].
\end{eqnarray*}
Then,
\begin{eqnarray*}
\left\| \int_0^\infty e^{-i (t-s) \mathcal{H}_0} \tilde{M} g(s) ds \right\|_{L^r_t L^p_x} \lesssim \left\| \int_0^\infty e^{is \mathcal{H}_0} \tilde{M} g(s) \right\|_{L^2} \lesssim \| g \|_{L^2_t L^2_x},
\end{eqnarray*}
where the last inequality follows from local smoothing.  Applying the Christ-Kiselev lemma, for any Strichartz pair $(r,p)$, we have
\begin{eqnarray*}
\left\| \int_0^t e^{-i (t-s) \mathcal{H}_0} \tilde{M} g(s) ds \right\|_{L^r_t L^p_x} \lesssim \| g \|_{L^2_t L^2_x}.
\end{eqnarray*}
Then,
\begin{eqnarray*}
\left\| e^{-it\mathcal{H}} P_c f \right\|_{L^r_t L^p_x} \lesssim \| f \|_{L^2} + \left\| \tilde{M}^{-1} V e^{-is \mathcal{H}} P_c f \right\|_{L^2_s L^2_x},
\end{eqnarray*}
so we need
\begin{eqnarray*}
\left\| \tilde{M}^{-1} V e^{-is\mathcal{H}} P_c f \right\|_{L^2_s L^2_x} \lesssim \| f \|_{L^2}.
\end{eqnarray*}
Taking a Fourier transform in $s$ gives
\begin{eqnarray*}
\int_{-\infty}^\infty \| \tilde{M}^{-1} V [P_c (\mathcal{H} - \lambda - i0) P_c]^{-1} P_c f \|_{L^2}^2 d\lambda \lesssim \| f \|_{L^2}^2.
\end{eqnarray*}
However, this follows from the smoothing estimate on $\mathcal{H}_0$, plus the standard resolvent identity under the spectral assumptions on $\mathcal{H}$.  Hence,
\begin{eqnarray*}
\| e^{-it\mathcal{H}} P_c f \|_{L^r_t L^p_x} \lesssim \| f \|_{L^2}.
\end{eqnarray*}
\end{proof}

\end{document}